\documentclass{imsart}

\RequirePackage{amsthm,amsmath,amsfonts,amssymb}
\RequirePackage[numbers]{natbib}
\RequirePackage[colorlinks,citecolor=blue,urlcolor=blue]{hyperref}
\RequirePackage{graphicx}
\RequirePackage{float}
\RequirePackage{xcolor}
\RequirePackage{rotating}
\RequirePackage[format=plain,font=it,labelfont=it]{caption}
\RequirePackage{enumitem}
\RequirePackage[normalem]{ulem}
\RequirePackage{verbatim}
\RequirePackage{tikz}
\usetikzlibrary{arrows.meta,positioning}
\RequirePackage{subcaption}
\RequirePackage{booktabs}
\RequirePackage[export]{adjustbox}

\startlocaldefs

\theoremstyle{plain}

\newtheorem{theorem}{Theorem}[section]
\newtheorem{lemma}[theorem]{Lemma}
\newtheorem{remark}[theorem]{Remark}
\newtheorem{corollary}[theorem]{Corollary}
\newtheorem{proposition}[theorem]{Proposition}

\theoremstyle{definition}
\newtheorem{definition}[theorem]{Definition}

\endlocaldefs

\begin{document}

\begin{frontmatter}
\title{A small noise approximation for Muller's Ratchet}
\runtitle{A small noise approximation for Muller's Ratchet}

\begin{aug}
%%%%%%%%%%%%%%%%%%%%%%%%%%%%%%%%%%%%%%%%%%%%%%%
%% Only one address is permitted per author. %%
%% Only division, organization and e-mail is %%
%% included in the address.                  %%
%% Additional information such as            %%
%% identifying the corresponding author must %%
%% be included in in the Acknowledgments     %%
%% section if necessary.                     %%
%% ORCID can be inserted by command:         %%
%% \orcid{0000-0000-0000-0000}               %%
%%%%%%%%%%%%%%%%%%%%%%%%%%%%%%%%%%%%%%%%%%%%%%%
\author[A]{\fnms{Carola Sophia }~\snm{Heinzel} \ead[label=e1]{carola.hienzel@stochastik.uni-freiburg.de }\orcid{0009-0000-9042-949X}}
\author[A]{\fnms{Peter}~\snm{Pfaffelhuber}\ead[label=e2]{p.p@stochastik.uni-freiburg.de}\orcid{0000-0002-6421-5460}}
\author[B]{\fnms{Anton}~\snm{Wakolbinger}\ead[label=e3]{wakolbin@math.uni-frankfurt.de}}
%%%%%%%%%%%%%%%%%%%%%%%%%%%%%%%%%%%%%%%%%%%%%%
%% Addresses                                %%
%%%%%%%%%%%%%%%%%%%%%%%%%%%%%%%%%%%%%%%%%%%%%%
\address[A]{Department of Mathematical Stochastics,
University of Freiburg\printead[presep={ ,\ }]{e1}}

\address[B]{Department of Mathematics,
University of Frankfurt \printead[presep={,\ }]{e2,e3}}
\end{aug}

\begin{abstract}
We consider an infinite system of SDEs with Fleming-Viot noise indexed by $k=0,1,2,...$, whose parameters $\alpha,\lambda$, and $\nu$ are the (deleterious) selection coefficient, the (uni-directional) mutation rate, and a quantity which determines the size of the system’s fluctuations.   The SDE's unique weak solution $X(t) = (X_k(t))_{k=0,1,2,...}$ models what is known in population genetics as \emph{Muller's ratchet}. Here, $X_k(t)$ stands for the frequency of individuals carrying $k$ deleterious mutations. Since the mutation process is uni-directional, $t\mapsto \inf\{k: X_k(t)> 0\}$ is non-decreasing for almost every path of $X$, and we refer to an increase as a click of Muller's ratchet. A long standing question concerns the clicking rate of Muller's ratchet. Using Duhamel's principle for semigroups, we give a partial answer by approximating $\mathbb E\Big(\sum_{k=1}^\infty kX_k(t) \Big)$ and $\mathbb E\big(X_0(t)\big)$ up to $\mathcal O(1/\nu^2)$ for fixed $\alpha$, $\lambda$ and $t>0$. Our results suggest that  $\psi:=\nu \alpha e^{-\lambda/\alpha}$ is a crucial quantity  also when the mutation/selection ratio $\theta = \lambda/\alpha$ is moderately large:   for large $\nu \alpha$, clicking of the ratchet on the time scale $\tfrac 1\alpha \log \theta$ becomes rare as soon as $\psi$ becomes large.
\end{abstract}

\begin{keyword}[class=MSC]
\kwdgroup[type=primary]{\kwd{92D15}
\kwd{60J70}}
\kwdgroup[type=secondary]{\kwd{60K35}\kwd{60H10}}
\end{keyword}

\begin{keyword}
\kwd{Muller's ratchet}
\kwd{selection}
\kwd{Fleming-Viot process}
\kwd{mutation balance}
\kwd{semigroups}
\end{keyword}

\end{frontmatter}
%%%%%%%%%%%%%%%%%%%%%%%%%%%%%%%%%%%%%%%%%%%%%%
%% Please use \tableofcontents for articles %%
%% with 50 pages and more                   %%
%%%%%%%%%%%%%%%%%%%%%%%%%%%%%%%%%%%%%%%%%%%%%%
%\tableofcontents

\section{Introduction and main results}
\label{S:intro}
In \cite{Muller1964}, Muller has introduced the following genetic model: A population of fixed size~$N$ reproduces randomly. Each individual carries a number of mutations, all of which are assumed to be deleterious. Fitness (measured by the expected number of offspring) of an individual carrying $k$ mutations is proportional to $(1 - \mathfrak s)^k$ for some $\mathfrak s\geq 0$. The offspring of an individual has a small chance $\mathfrak u \geq 0$ to gain a new deleterious mutation. In particular, offspring of an individual has at least as many mutations as the parent and mutation is an irreversible process. Hence, there is a chance that, eventually, all individuals will have at least one mutation and we say that the ratchet has clicked. So, mutations drive the population to a larger number of deleterious mutations while selection acts in the opposite direction. This means that Muller's ratchet describes a form of mutation-selection quasi-balance and the question is if selection is strong enough in order to prevent the accumulation of deleterious mutations, at least for some time. In the biological literature, Muller's ratchet is a textbook model for the evolutionary advantage of reproduction under recombination \citep{Felsenstein1974}, has many different applications  (e.g. \cite{mcfarland2013impact, howe2008muller}) and huge simulation studies try to answer how fast fitness of a population decreases by Muller's ratchet \citep{Loewe2006}. 

Variants of Muller's ratchet have been studied in the past years. \cite{pfaffelhuber2012muller} allowed for compensatory back-mutations, and \cite{foutel2020spatial} considered a spatial version of the ratchet. \cite{gonzalez2023quasi} considered a modification of the fitness function leading to the so-called tournament ratchet: here, selection between individuals of type $k$ and of type $\ell$ only depends on whether $k > \ell$ or $k < \ell$ and not on the difference $k-\ell$. With this fitness function, $(X_0, ..., X_n)$ is an autonomous process for any $n \in \mathbb N$, which makes the analysis of the tournament ratchet more accessible than that of the classical one. Recently,~\cite{ISW2025} managed to compute the tournament ratchet's click rate for a parameter regime leading to metastability.

\subsection{A Fleming-Viot model of Muller's ratchet}
Here, we study a version of Muller's ratchet described by the infinite system of stochastic differential equations
\begin{align}\label{eq:ratchetSDE}
  d X_k = \Big( \alpha \Big(\sum_{j=0}^\infty (j-k)X_j\Big) X_k + \lambda(X_{k-1} - X_k)\Big)dt + \sum_{\ell \neq k}\sqrt{\frac 1{\nu} X_k X_\ell} \, dW_{k\ell}, 
\end{align}
for $k=0,1,\ldots$ with $X_{-1}:=0$, $\sum X_k=1$, $\lambda\geq 0$ (the mutation rate), $\alpha\geq 0$ (the selection coefficient) and $\nu>0$ (the effective population size on the chosen time-scale), where $(W_{k\ell})_{k>\ell}$ is a family of independent Brownian motions and $W_{k\ell} = -W_{\ell k}$. Existence and uniqueness of a (weak) solution of \eqref{eq:ratchetSDE} was shown in \cite{pfaffelhuber2012muller} (under the assumption of exponential moments) and \cite{audiffren2013muller} (under the weaker assumption of moments of order $2 + \delta$). The system~\eqref{eq:ratchetSDE} and the Wright-Fisher (WF)  model of Muller's ratchet  (sketched above and described in more detail in Sec~\ref{simul}) are ``synchronized'' by measuring time $t$ in ~\eqref{eq:ratchetSDE} in units of WF generations and putting $(\alpha, \lambda, \nu):= (\mathfrak s, \mathfrak u, N)$; with this choice, expectation and covariance of the increments of $X$ in ~\eqref{eq:ratchetSDE} mimic those of the type frequencies in the WF system.

One of the benefits of \eqref{eq:ratchetSDE} is its time-scaling property: With $X^{(\alpha, \lambda, \nu)}$ denoting the solution of \eqref{eq:ratchetSDE}, one has for all $c>0$
\begin{equation}\label{eq:scaling}
    \big(X^{(\alpha, \lambda, \nu)}(tc)\big)_{t\ge 0}  \stackrel d= 
     \big(X^{(c\alpha, c\lambda, \nu /c)}(t)\big)_{t\ge 0}.
\end{equation}
Thus, measuring time $t$ in~\eqref{eq:ratchetSDE} in $N$ generations of the WF system would lead to the choice $(\alpha, \lambda, \nu) := (\mathfrak s N, \mathfrak u N, 1)$. Below, we will work with yet another timescale, turning $(\alpha, \lambda, \nu)$ into $(1, \lambda/\alpha, \nu \alpha)$,  which is appropriate in the case of {\em moderate selection}, i.e. $1/N \ll \mathfrak s \ll 1$ as $N\to \infty$.

 The {\em mutation-selection ratio} 
$$\theta:= \frac \lambda \alpha$$
will play a key role  for the  frequency of clicks of \eqref{eq:ratchetSDE}.  If $X = (X(t))_{t\geq 0}$, $X(t) = (X_k(t))_{k=0,1,2,...}$ is the solution of \eqref{eq:ratchetSDE}, let
\begin{align}\label{eq:Kast}
  K^\ast(t) := \inf\{k\in\mathbb N_0: X_k(t)>0\},
\end{align}
which we refer to as the \emph{fittest class} (or \emph{least-loaded class}) by time $t$, because selection favors individuals with a small number of deleterious mutations. Since mutation is irreversible, $t\mapsto K^\ast(t)$ is non-decreasing, and any time $\tau>0$ with $K^\ast(\tau) - K^\ast(\tau-)>0$ is a \emph{click} of Muller's ratchet. In \cite{audiffren2013muller} it was proved  that the ratchet clicks a.s. in finite time, and \cite{mariani2025metastability} \mbox{analysed} properties of quasistationarity of $X$. We believe (even though we are not aware of a proof) that for all $(\alpha, \lambda, \nu)$  both the a.s. limit  of $\tfrac 1t K^\ast(t)$ and that of $\tfrac 1t \sum_k k X_k(t)$ (as $t\to \infty$) exist and are equal to some constant $\mathfrak r =  \mathfrak r (\alpha, \lambda, \nu)$, independent of the initial condition $X(0)$.

A formula for $\mathfrak r(\alpha, \lambda, \nu)$ seems out of reach. However, $\mathfrak r$ (provided it exists) inherits from~\eqref{eq:scaling} the scaling property 
$$\mathfrak r(\alpha, \lambda, \nu) = \alpha\,  \mathfrak r(1, \theta, \nu\alpha),$$
i.e.\ $\mathfrak r/\alpha $ only depends on $\theta$ and $\nu\alpha$ rather than on the full triple $(\alpha, \lambda, \nu)$; see also Remark~\ref{rem:scaling} below. 

~

In the case $\nu=\infty$, the system \eqref{eq:ratchetSDE} has an explicit solution; see Proposition~4.1 in \cite{EtheridgePfaffelhuberWakolbinger2007}, Theorem 2 in \cite{pfaffelhuber2012muller}, or Theorem~\ref{T2} below. In particular, if $X_0(0)>0$, the solution $X(t)$ of this system converges as $t\to \infty$ to $\pi := \text{Poi}(\theta)$, the Poisson distribution with parameter $\theta$. Thus, for $\nu=\infty$ one clearly has $K^\ast (t) = K^\ast (0)$.

Since the clicking rate $\mathfrak r$ is unavailable for finite $\nu$, but~\eqref{eq:ratchetSDE} is solvable for $\nu=\infty$, here we study asymptotic properties of $X$ as $\nu\to \infty$. In particular, as a first rigorous quantitative step in exploring the asymptotics of the rate of Muller's ratchet, for fixed $\alpha$, $\lambda$  and $t$  we derive in Corollary~\ref{cor} the expected values of quantities as 
$$m_1(X(t)) := \sum_{k=0}^\infty k X_k(t)$$ 
and $X_k(t)$,  and their asymptotics as $\nu \to\infty$, given the system is started in $\pi$. \subsection{Heuristic approaches}
We briefly recall the history of quantitative approaches for Muller's ratchet.  The first mathematical description was given by \cite{Haigh1978}. It was followed by some other investigations; see \cite{StephanChaoSmale1993}, \cite{Gessler1995}, \cite{HiggsWoodcock1995}, \cite{GordoCharlesworth2000}, \cite{RouzineWakeleyCoffin2003}, \cite{EtheridgePfaffelhuberWakolbinger2007}, \cite{pmid18689884}.
However, all of these papers rely on approximations in order to study the full system~\eqref{eq:ratchetSDE} or its discrete counterpart in the case of finite $\nu$. In particular, it has become common to try to approximate the full system by a one-dimensional diffusion for $X_0$ and studying the hitting time of 0. This is promising since $(X_0)_{t\geq 0}$ solves the SDE
\begin{align}
\label{eq:dX0}
dX_0 = (\alpha m_1(X) - \lambda )X_0 dt + \sqrt{\frac 1{\nu} X_0(1-X_0)}\, dW
\end{align}
for some Brownian motion $W$, provided that $X$ solves~\eqref{eq:ratchetSDE}.  Hence, given an approximate functional relationship $m_1(X) \approx \widehat m_1(X_0)$ leads to a closed system for $X_0$ and it is possible to study the resulting one-dimensional diffusion by classical theory. Using some heuristics and calculations, \cite{StephanChaoSmale1993}, \cite{CharlesworthCharlesworth1997}, \cite{EtheridgePfaffelhuberWakolbinger2007} suggest the approximate functional relationship
\begin{align}
  \label{eq:appEPW}
  \widehat m_1(X_0) = \theta + \kappa\Big(1 - \frac{X_0}{e^{-\theta}}\Big)
\end{align}
for $\kappa=0.6, \kappa=0.7$ and $\kappa=0.58$, respectively.
It was shown that this approximation gives reasonable results for the average time between clicks of the ratchet for various parameter combinations when compared to simulations. However -- see Table 6.1 of \cite{StephanKim2002} -- some other parameter combinations lead to large errors up to a factor of 2, even for large populations. \cite{assaf2011fixation, metzger2013distribution, brautigam2022diffusion} estimated the expected time of first click with different approximations, but still using a 1d-heuristics for the evolution of $X_0$. 
%\cite{waxman2010stochastic} considered a similar model from which they conclude an approximation of the rate of the ratchet for some parameter combinations. 
In \cite{neher2012fluctuations}, linear perturbations, an eigen-decomposition and path integrals are used for analysing the evolution of Muller’s ratchet. Breaking the analysis down to the 1d-case, they find a better approximation for the rate of the ratchet when letting $\kappa$ in 
\eqref{eq:appEPW} depend on~$\theta$. 

Before we start with our rigorous results for the solution $X$ of~\eqref{eq:ratchetSDE}, we add yet another heuristics for the rate of the ratchet, which involves the first two components of~$X$.

\begin{remark}[Yet another heuristics on the rate of the ratchet]
Given $D(t):= 1-X_0(t)-X_1(t)$, we find that
\begin{align*}
dX_0 & = (\alpha m_1(X)-\lambda)X_0\, dt + \sqrt{\frac 1{\nu} X_0X_1}dW_{01} + \sqrt{\frac 1{\nu} X_0D}dW_{0},\\
dX_1 & = \left((\alpha m_1(X) - \alpha-\lambda)X_1+\lambda X_0\right)dt - \sqrt{\frac 1{\nu} X_0X_1}dW_{01} + \sqrt{\frac 1{\nu} X_1D}dW_{1}.
\end{align*}
for independent Brownian motions, $W_0, W_1$, and $W_{01}$.  It\^o's lemma applied to $Q:=q(X_0,X_1):= X_0/X_1$ yields
\begin{align} \label{Ito}dQ & = \frac{1}{X_1}dX_0 - \frac{X_0}{X_1^2}dX_1 - \frac{1}{X_1^2} d\langle X_0, X_1\rangle +  \frac{X_0}{X_1^3} d\langle X_1\rangle \\
& = \notag 
Q(\alpha -\lambda Q)dt
 +\sqrt{\frac 1{\nu}} \left(\sqrt Q dW_{01} + \sqrt{\frac D{X_1}}\sqrt
  Q dW_0 + Q^{3/2}dW_{01} + \sqrt{\frac D{X_1}} Q dW_1\right) \\
\notag & \qquad \qquad \qquad \qquad \qquad \qquad \qquad \qquad \qquad \qquad + \frac 1{\nu} \left(Q^2+ \frac D{X_1}Q+Q\right)dt. 
\end{align}
The fluctuation term in \eqref{Ito} simplifies to
  \begin{align}\notag
    \sqrt Q(1+Q)dW_{01}+\sqrt{\frac D{X_1}}\sqrt Q dW_0+ \sqrt{\frac
      D{X_1}} Q dW_1=\sqrt{Q(1+Q)\left(\frac D{X_1} + 1+Q\right)} dW.
\end{align}
for some standard Brownian motion $W$. Since $\frac D{X_1} + 1+Q = \frac 1{X_1}$, \eqref{Ito} gets the interesting form
\begin{align}\label{dynQ}
dQ = Q\Big(\alpha-\lambda Q + \frac1{\nu X_1}\Big)dt + \sqrt{\frac {Q(1+Q)}{\nu X_1}} dW.
\end{align}
This means that the evolution of $Q$ only depends on $X_1$ (and $Q$). 
Now, set $\widetilde Q(t) := \theta Q(t/\alpha)$ and $\widetilde X_1(t) = X_1(t / \alpha)$. Then, a little  calculation gives
\begin{align}\label{sdynQ}
d\widetilde Q = \widetilde Q\Big(1-\widetilde Q + \frac1{\nu \alpha \widetilde X_1}\Big)dt +  \sqrt{\widetilde Q   \Big(1+\frac{\widetilde Q}{\theta}\Big) \frac {\theta}{\nu \alpha \widetilde X_1}}d\widetilde W
\end{align}
for a standard Brownian motion $\widetilde W$. Note that the last calculations do not require that $X(t)$ is close to $\pi$. However, observe that $\widetilde Q(t)=1$ if $X(t) = \pi$ and if $X(t)$ is close to $ \pi$, we see that the last term becomes $\sqrt{\widetilde Q\Big(1 + \frac{\widetilde Q}{\theta}\Big)\frac{1}{\psi}}\, d\widetilde W$, with $\psi = \nu \alpha e^{-\theta}$. Since we rescaled time by $1/\alpha$, this suggests that the click time of the ratchet is much larger than $1/\alpha$ if $\psi \gg 1$ and is of order $1/\alpha$ if $\psi = \mathcal O(1)$.
\end{remark}

~

\subsection{Main results} \label{secMR}
We assume throughout that $\alpha, \lambda, \nu > 0$. A key for deriving some interesting expectations will be the following result  for the semigroup of $X$, evaluated at products of the moment generating functions
\begin{align}\label{eq:gxi}
    g_\xi(x) := \sum_{k=0}^\infty x_k e^{-\xi k}.
\end{align}

\begin{theorem}[Towards the rate of the ratchet]
  \label{T1}
  Let $X$ be a solution of \eqref{eq:ratchetSDE} with $\alpha,  \lambda > 0$ and $X(0) = \text{Poi}(\theta)$. Then, for $\xi, \eta, t \geq 0$, 
  \begin{align*}
  \mathbb E & \big( g_\xi(X(t)) g_\eta(X(t))\big) \\ & = \exp\Big( -\theta(2 - e^{-\xi} - e^{-\eta})\Big) \\ & \cdot \Big(1 +  \frac{1}{\nu \alpha} \int_0^{t \alpha} \Big(3 \exp\Big(\theta(1 - e^{-s})^2 \Big) - 2 \exp\Big(\theta(1 - e^{- (s + \xi)})(1 - e^{-s}) \Big) \\ 
  & \qquad - 2 \exp\Big(\theta(1 - e^{-(s + \eta)})(1 - e^{-s}) \Big) + \exp\Big( \theta(1 - e^{-(s + \xi)})(1 - e^{-(s + \eta)})\Big)\Big) ds\Big) \\ & \qquad \qquad \qquad \qquad \qquad \qquad \qquad \qquad \qquad \qquad \qquad \qquad \qquad \qquad \qquad + \mathcal O\Big(\frac{1}{\nu^2}\Big)
  \end{align*}
  in the limit of large $\nu$. Here, the $\mathcal O()$ is stable under taking derivatives wrt $\xi, \eta$, as well as limits $\xi, \eta \to\infty$.
\end{theorem}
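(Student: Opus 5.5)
The plan is to compare the semigroup of \eqref{eq:ratchetSDE} with its deterministic ($\nu=\infty$) counterpart via Duhamel's principle. Write the generator as $G = G^{\det} + \frac1\nu G^{\rm res}$. Here $G^{\det}f(x) = \sum_k\big(\alpha(\sum_j (j-k)x_j)x_k + \lambda(x_{k-1}-x_k)\big)\partial_k f$ is the selection--mutation drift, and
\begin{align*}
G^{\rm res}f(x) = \tfrac12\sum_{k,\ell} x_k(\delta_{k\ell}-x_\ell)\,\partial_k\partial_\ell f
\end{align*}
is the Fleming--Viot resampling part. Let $(P^{\det}_r)$ be the deterministic flow semigroup, $P^{\det}_r f(x)=f(x(r))$. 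Duhamel's principle gives, for $f=g_\xi g_\eta$,
\begin{align*}
\mathbb E f(X(t)) = P^{\det}_t f(X(0)) + \frac1\nu\int_0^t \mathbb E\big[ G^{\rm res} P^{\det}_{t-u} f(X(u))\big]\,du .
\end{align*}
Since $\pi=\text{Poi}(\theta)$ is a fixed point of the deterministic flow, the first term equals $f(\pi)=\exp(-\theta(2-e^{-\xi}-e^{-\eta}))$. In the integral I will replace $X(u)$ by $\pi$. Applying Duhamel once more to the function $x\mapsto G^{\rm res}P^{\det}_{t-u}f(x)$ shows that $\mathbb E[\,\cdot\,(X(u))]-(\cdot)(\pi)=\mathcal O(1/\nu)$, which produces the $\mathcal O(1/\nu^2)$ error.

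\textbf{Explicit deterministic flow.} I use the explicit solution for $\nu=\infty$ (Theorem~\ref{T2}):
\begin{align*}
x_k(r)\propto\sum_j x_j(0)e^{-\alpha j r}\,\text{Poi}\big(\theta(1-e^{-\alpha r})\big)(k-j).
\end{align*}
It gives
\begin{align*}
P^{\det}_r g_\xi(x) = e^{-\theta(1-e^{-\alpha r})(1-e^{-\xi})}\,\frac{g_{\xi+\alpha r}(x)}{g_{\alpha r}(x)},
\end{align*}
hence
\begin{align*}
P^{\det}_r f = c_r\,\frac{g_{\xi+\alpha r}\,g_{\eta+\alpha r}}{g_{\alpha r}^2}.
\end{align*}
This is a product of powers of linear functionals $g_a$. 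For linear functionals, $G^{\rm res}$ acts through the carré-du-champ $\tfrac12\sum x_k(\delta_{k\ell}-x_\ell)\partial_k g_a\partial_\ell g_b=\tfrac12(g_{a+b}-g_ag_b)$. So $G^{\rm res}$ of such a product is the product times a sum over pairs of factors, each weighted by the appropriate exponent combinatorics and by $(g_{a+b}-g_ag_b)/(g_ag_b)$. The pair of the two numerator factors gets weight $+1$. The two pairs formed by $g_{\xi+\alpha r}$ or $g_{\eta+\alpha r}$ with the factor $g_{\alpha r}^{-2}$ get weight $-2$ each. The self-interaction of $g_{\alpha r}^{-2}$ gets $\tfrac12\cdot(-2)(-3)=3$. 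The self-terms of the linear factors vanish.

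\textbf{Evaluation at $\pi$.} At $\pi$ one has $g_a(\pi)=e^{-\theta(1-e^{-a})}$, hence
\begin{align*}
\frac{g_{a+b}(\pi)}{g_a(\pi)g_b(\pi)}=\exp\big(\theta(1-e^{-a})(1-e^{-b})\big).
\end{align*}
Moreover, $c_r\,g_{\xi+\alpha r}g_{\eta+\alpha r}/g_{\alpha r}^2$ evaluated at $\pi$ collapses to $f(\pi)$. The constant terms "$-1$" from the four pairs cancel, since $1-2-2+3=0$. What remains is $f(\pi)$ times $3e^{\theta(1-e^{-s})^2}-2e^{\theta(1-e^{-(s+\xi)})(1-e^{-s})}-2e^{\theta(1-e^{-(s+\eta)})(1-e^{-s})}+e^{\theta(1-e^{-(s+\xi)})(1-e^{-(s+\eta)})}$ with $s=\alpha(t-u)$. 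The substitution $ds=\alpha\,du$ turns $\frac1\nu\int_0^t du$ into $\frac1{\nu\alpha}\int_0^{t\alpha}ds$, which is exactly the claimed formula.

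\textbf{Main obstacle: the remainder.} The hardest step is controlling the second-order remainder uniformly. I must show that
\begin{align*}
\frac1{\nu^2}\int_0^t\int_0^u \mathbb E\big[G^{\rm res}P^{\det}_{u-v}G^{\rm res}P^{\det}_{t-u}f(X(v))\big]\,dv\,du
\end{align*}
is bounded. This requires bounds on $1/g_{\alpha r}(X(v))$, i.e. on negative moments of the Laplace transform, which may blow up when the ratchet has clicked. I would handle this using $g_{\alpha r}(x)\ge e^{-\alpha r m_1(x)}$ by Jensen, combined with exponential moment bounds for $m_1(X(v))$ started from $\pi$ (as in \cite{pfaffelhuber2012muller}). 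Two further points need checking. First, $P^{\det}$ may only be applied to such ratios of $g$'s, which lie in the domain of $G$. Second, all these bounds must be uniform in $\xi,\eta\in[0,\infty]$ and persist after differentiation in $\xi,\eta$. This holds because derivatives only insert polynomial factors $k^m$ into the $g$'s, and these are controlled by the same moment bounds.
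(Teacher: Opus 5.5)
Your proposal is correct and follows essentially the same route as the paper. Both arguments make a first Duhamel expansion of $g_\xi g_\eta$ around the deterministic flow, using that $\text{Poi}(\theta)$ is a fixed point, and then a second Duhamel expansion of $G^{(2)}S_{\tau-t}(g_\xi g_\eta)$ to get the $\mathcal O(1/\nu^2)$ remainder. Both control the negative powers $g_{\alpha r}^{-a}$ via Jensen's inequality $g_{\alpha r}(x)\ge e^{-\alpha r m_1(x)}$ together with exponential moment bounds on $\sup_{t\le\tau} m_1(X(t))$; the paper obtains these from the exponential supermartingale bound of Lemma~\ref{l:linear}, valid once $\nu$ is large. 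Finally, both evaluate explicitly at $\text{Poi}(\theta)$, and your carr\'e-du-champ weights $1,-2,-2,3$ reproduce \eqref{eq:sol42} and \eqref{eq:sol42Poi}. The domain issue you flag for applying Duhamel to unbounded ratios of $g$'s is handled in the paper by a localized Ethier--Kurtz argument (Proposition~\ref{T:434appl}), with domination by $\mathbb E\sup_{t\le\tau} e^{c\, m_1(X(t))}$.
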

Note that for $\eta=0$, Theorem~\ref{T1} reduces to
\begin{equation}
\label{eq:Ehxi}
\begin{aligned}
    \mathbb E\big( g_\xi(X(t))\big) &  = e^{-\theta(1 - e^{-\xi})} \Big(1 +  \frac 1{\nu\alpha} \int_0^{t\alpha} \Big( \exp\Big({\theta(1 - e^{-s})^2}\Big) \\ & \qquad \qquad \qquad \qquad \qquad - \exp\Big({\theta(1 - e^{-(s + \xi)})(1 - e^{-s})}\Big)\Big)ds\Big) + \mathcal O\Big(\frac{1}{\nu^2}\Big).
\end{aligned}    
\end{equation}
Next, we use the above result in order to obtain several derived quantities. We set
$$\bar m_2(x) := \sum_k (k - m_1(x))^2 x_k.$$ 

\begin{corollary}
  \label{cor}
  Let $X$ be a solution of \eqref{eq:ratchetSDE} and $X(0) = \text{Poi}(\theta)$. Then, as $\nu\to \infty$,  the first moment $m_1(X(t ))$ of the frequency profile $X(t )$ obeys
    \begin{align}
          \mathbb E\big(m_1( X(t))\big) & = \theta +  \frac{1}{2\nu\alpha}  \Big(\exp\big(\theta (1-e^{-\alpha t })^2\big) - 1\Big) + \mathcal  O\Big(\frac{1}{\nu^2}\Big) \label{eq:cor1},
          \intertext{while the expected size of the class carrying $k$ mutations satisfies}
          \mathbb{E}\big(X_k(t )\big) & = e^{-\theta}\frac{\theta^k}{k!} + \frac{1}{\nu \alpha} \int_0^{t \alpha} \exp\Big(-\theta e^{-s}\Big)\Big( \exp\Big(-\theta e^{-s}(1 - e^{-s})\Big)\frac{\theta^k}{k!} \label{eq:cor21}\\ \notag & \qquad \qquad \qquad \qquad \qquad \qquad - \frac{\theta^k(1- e^{-s}(1 - e^{-s}))^k}{k!}\Big) ds + \mathcal O\left(\frac{1}{\nu^2}\right).
          \intertext{In particular,}
          \mathbb E\big(X_0(t )\big) & = e^{-\theta} - \frac{1}{\nu \alpha} \int_0^{t  \alpha} \exp\Big(-\theta e^{-s}\Big)\Big(1 - \exp\Big(-\theta e^{-s}(1 - e^{-s})\Big)\Big) ds + \mathcal O\Big(\frac{1}{\nu^2}\Big). \label{eq:cor22}
          \intertext{The expected centered second moment of the frequency profile satisfies}
        \mathbb E\big(\bar m_2(X(t) )\big) & = \theta - \frac{\theta}{\nu\alpha}e^{-t \alpha}(1 - e^{-t \alpha}) \exp\Big(\theta(1 - e^{-t \alpha})^2 \Big) + \mathcal O\Big(\frac{1}{\nu^2}\Big) \label{eq:cor31}.
        \intertext{Finally, the variance of $m_1(X(t ))$ and its covariance with $X_0(t )$ obey}
        \mathbb V(m_1(X(t) )) & = \frac{1}{\nu\alpha} \int_0^{t \alpha} \theta e^{-2s}\big(1 + \theta(1 - e^{-s})^2 \big) \exp\Big(\theta(1 - e^{- s})^2 \Big) ds + \mathcal O\Big(\frac{1}{\nu^2}\Big), \label{eq:cor32} \\
        \operatorname{Cov} (m_1(X(t) ), & X_0(t )) = \frac{e^{-\theta}}{2\nu\alpha} \Big( \exp\Big( \theta(1 - e^{-t \alpha})^2 \Big) - 2\exp\Big( \theta(1 - e^{-t \alpha}) \Big) + 1 \Big) + \mathcal O\Big(\frac{1}{\nu^2}\Big). \label{eq:cor4}          
        \end{align}
\end{corollary}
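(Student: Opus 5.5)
The plan is to derive every item of Corollary~\ref{cor} from Theorem~\ref{T1} and its specialisation \eqref{eq:Ehxi}, using only that the $\mathcal O(1/\nu^2)$ remainder survives derivatives in $\xi,\eta$ and the limits $\xi,\eta\to\infty$. Write $G_1(\xi):=\mathbb E(g_\xi(X(t)))$ and $G_2(\xi,\eta):=\mathbb E(g_\xi(X(t))g_\eta(X(t)))$. The required functionals are read off from these generating functions:
\[
m_1(x)=-\partial_\xi g_\xi(x)\big|_{\xi=0},\qquad \sum_k k^2x_k=\partial_\xi^2 g_\xi(x)\big|_{\xi=0},\qquad x_k=\tfrac{(-1)^k}{k!}\partial_z^k\sum_j x_j z^j\Big|_{z=0},
\]
with $z=e^{-\xi}$. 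Taking $\xi\to\infty$ gives $x_0$, and mixed derivatives or limits of $G_2$ give $\mathbb E(m_1^2)$ and $\mathbb E(m_1X_0)$. Interchanging $\mathbb E$ with derivatives is justified by the moment bounds available for the solution of \eqref{eq:ratchetSDE} started in a Poisson law.

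\emph{First moment and $X_k$.} Differentiating \eqref{eq:Ehxi} at $\xi=0$, the prefactor contributes $\theta$ times the bracket, which equals $1$ at $\xi=0$ because the integrand vanishes there. The $\xi$-derivative of the integrand at $\xi=0$ is
\[
-\theta e^{-s}(1-e^{-s})\exp\big(\theta(1-e^{-s})^2\big).
\]
So $-G_1'(0)=\theta+\frac1{\nu\alpha}\int_0^{t\alpha}\theta e^{-s}(1-e^{-s})e^{\theta(1-e^{-s})^2}\,ds$, and the integral is exactly $\frac12\big[e^{\theta(1-e^{-s})^2}\big]_0^{t\alpha}$, which gives \eqref{eq:cor1}. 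For \eqref{eq:cor21}, set $z=e^{-\xi}$ and expand in $z$. The term $e^{-\theta(1-z)}e^{\theta(1-e^{-s})^2}$ has $z^k$-coefficient $e^{-\theta}e^{\theta(1-e^{-s})^2}\theta^k/k!$. The second term equals
\[
e^{-\theta}\exp\big(\theta(1-e^{-s})\big)\exp\big(\theta z(1-e^{-s}(1-e^{-s}))\big),
\]
since $-\theta(1-z)+\theta(1-ze^{-s})(1-e^{-s})$ has $z$-coefficient $\theta(1-e^{-s}+e^{-2s})$. Simplifying the constant prefactors, using $-\theta+\theta(1-e^{-s})^2=-\theta e^{-s}-\theta e^{-s}(1-e^{-s})$ and $-\theta+\theta(1-e^{-s})=-\theta e^{-s}$, yields \eqref{eq:cor21}. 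Setting $k=0$ gives \eqref{eq:cor22}, which can also be checked directly from $\xi\to\infty$.

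\emph{Second-order quantities.} By Theorem~\ref{T1} with $\eta=\xi$ differentiated, or equivalently $\partial_\xi\partial_\eta G_2|_{0,0}$, we get $\mathbb E(m_1^2)$. Subtracting $(\mathbb E m_1)^2$, where the square is computed to order $1/\nu$ only, gives \eqref{eq:cor32}. The prefactor parts cancel to $\theta^2$, so only $\partial_\xi\partial_\eta$ of the bracket integrand survives. The terms $-2e^{\dots\xi}$ and $-2e^{\dots\eta}$ carry no mixed derivative. The mixed derivative of $\exp(\theta(1-e^{-(s+\xi)})(1-e^{-(s+\eta)}))$ at $0$ is
\[
\big(\theta e^{-2s}+\theta^2e^{-2s}(1-e^{-s})^2\big)e^{\theta(1-e^{-s})^2}.
\]
The cross terms from the prefactor times single derivatives cancel against the square of \eqref{eq:cor1}; this is where care is needed. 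Next, $\mathbb E\bar m_2=\mathbb E\sum k^2X_k-\mathbb E m_1^2$, with the first term from $G_1''(0)$. Both pieces lead to integrals with explicit antiderivatives, and I expect \eqref{eq:cor31} after writing $e^{-s}(1-e^{-s})e^{\theta(1-e^{-s})^2}$-type integrands as derivatives. Finally, \eqref{eq:cor4} follows from $-\partial_\xi G_2(\xi,\eta)|_{\xi=0,\eta\to\infty}$ minus $\mathbb E(m_1)\mathbb E(X_0)$, with the resulting integrals again exact derivatives, e.g. $\frac{d}{ds}e^{\theta(1-e^{-s})}$.

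\emph{Main obstacle.} The limit computations are routine. The main obstacle is bookkeeping in the second-order quantities: we must verify that the products of $1/\nu$-corrections cancel correctly, and that the integrals collapse to the closed boundary-term forms in \eqref{eq:cor31} and \eqref{eq:cor4}. I would first do $\bar m_2$ carefully, identifying each integrand as the $s$-derivative of $e^{-s}(1-e^{-s})e^{\theta(1-e^{-s})^2}$ or $e^{\theta(1-e^{-s})^2}$.
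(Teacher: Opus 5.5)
Your proposal is correct and follows essentially the paper's own route. Like the paper, you differentiate the expansions of Theorem~\ref{T1} and \eqref{eq:Ehxi} at $\xi=\eta=0$, let $\eta\to\infty$ for $X_0$, read off coefficients of $e^{-k\xi}$, cancel the cross terms against $(\mathbb E\, m_1)^2$ and $\mathbb E(m_1)\mathbb E(X_0)$, collapse the integrals to boundary terms, and justify swapping $\mathbb E$ and $\partial_\xi$ by dominated convergence (the paper dominates by $m_1(X(t))$).

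There are two harmless slips. First, the coefficient formula should carry no factor $(-1)^k$. Second, for \eqref{eq:cor31} the $\mathbb E(\sum_k k^2X_k)$ and $\mathbb E(m_1^2)$ pieces do not separately have explicit antiderivatives; only their combined integrand does, namely $\big(\theta e^{-s}-2\theta e^{-2s}-2\theta^2e^{-2s}(1-e^{-s})^2\big)e^{\theta(1-e^{-s})^2}=-\theta\frac{d}{ds}\big(e^{-s}(1-e^{-s})e^{\theta(1-e^{-s})^2}\big)$, which is also why \eqref{eq:cor32} remains an integral.
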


\begin{remark}[Scaling]
    \label{rem:scaling}
    Note that $\alpha, \lambda$ and $t$ are fixed in Theorem~\ref{T1} and Corollary~\ref{cor}. Consider in contrast the setting $1/\nu \ll \alpha \ll  1$ as $\nu \to \infty$, with $\theta = \frac \lambda \alpha$ not depending on~$\nu$. Then, we can rescale time in \eqref{eq:ratchetSDE} by $dr := \alpha \, dt$ in \, to obtain
    \begin{align}
        \label{eq:ratchetSDEscaling}
    d X_k = \Big( \Big(\sum_{j=0}^\infty (j-k)X_j\Big) X_k + \theta(X_{k-1} - X_k)\Big)dr + \sum_{\ell \neq k}\sqrt{\frac 1{\nu\alpha} X_k X_\ell} \, dW_{k\ell}, \qquad \tag{$\ast_s$}
    \end{align}
    which satisfies the assumptions of the above results with the triple $(\alpha, \lambda, \nu)$ now replaced by  $(1, \theta, \nu \alpha)$. Applying the results, and scaling time back to $dt = \tfrac{1}{\alpha} dr$, we see that in this scenario, Theorem~\ref{T1} and Corollary~\ref{cor} continue to hold with $t$ replaced by $t/\alpha$ and approximation error $\mathcal O\big( \frac{1}{\nu^2}\big)$ replaced by $\mathcal O\big( \frac{1}{(\nu \alpha)^2}\big)$.
\end{remark}

\begin{remark}[Two sanity checks]
\label{rem:sanity}
As we will see in \eqref{eq:dm1}, 
$$ dm_1 = (\lambda - \alpha \bar m_2) dt + \sqrt{\tfrac 1\nu \bar m_2}dW. $$
In particular, 
\begin{align}
\label{eq:sanity}
\frac{d}{dt }\mathbb E\big(m_1(X(t) )\big)
=
\lambda-\alpha\mathbb E\big(\bar m_2(X(t) )\big).
\end{align}
Moreover, from \eqref{eq:dX0}, we see that
%\begin{align*}
%dX_0 & = (\alpha m_1(X) - \lambda) X_0 dt + \sqrt{\tfrac 1N X_0(1-X_0)}dW,
%\end{align*}
%so in particular
\begin{align}
\label{eq:sanity2}
\frac{d}{dt } \mathbb E\big(X_0(t )\big) = \mathbb E\big((\alpha m_1(X(t) ) - \lambda)X_0(t )\big).
\end{align}
The formulas from the above corollary are consistent with \eqref{eq:sanity} and \eqref{eq:sanity2}, as we will see in Section~\ref{ss:sanity}.
\end{remark}

\begin{remark}[Overview of the proof of Theorem~\ref{T1}]\label{rem:overview}
Proposition \ref{martprob} guarantees that the process $X$ satisfying \eqref{eq:ratchetSDE} solves a well-posed martingale problem for the operator $G = G^{(1)} + G^{(2)}$, where $G^{(1)}$ generates the semigroup $S$ of the  deterministic dynamical system of mutation and selection, and $G^{(2)}$ is the (neutral) Fleming-Viot diffusion operator (with the small coefficient~$1/\nu$).
For the functions $x\mapsto f(x) = g_\xi(x)g_\eta(x)$ that appear in Theorem~\ref{T1},  explicit formulas for $S_tf(x)$ and $G^{(2)}S_tf(x)$ are provided by the results of Section~\ref{S:mart}, thus preparing the ground for an application of Duhamel's formula  (see also Appendix~\ref{app:duhamel})
\begin{equation}\label{Duhamelform}
\mathbb E_x(f(X_\tau)) = S_\tau f(x)+\int_0^\tau \mathbb E_x(G^{(2)}S_{\tau-s}f(X(s)))\, ds. 
\end{equation}
Replacing in~\eqref{Duhamelform}  both $x$ and  $X(s)$ by  $\rm Poi(\theta)$ leads to a fully explicit approximation of $\mathbb E_x(f(X_t))$, but makes it necessary to control the emerging approximation error. This is achieved via martingale methods. More precisely, we will consider the processes 
$$M_f(t) := S_{\tau-t}f(X(t))  - \int_0^t G^{(2)}S_{\tau-s}f(X(s))\, ds, \qquad 0\le t \le \tau.$$
Proposition \ref{T:434appl} will provide conditions under which $M_f$ is a martingale, and the results of Section 4 will guarantee that these conditions are satisfied when choosing $f$ either as $g_\xi g_\eta$ or as $\nu G^{(2)}S_t(g_\xi g_\eta)$. The martingales arising from this second choice will then be employed to bound the above-mentioned approximation error. Figure \ref{fig:proof-structure} illustrates the steps in the proof of Theorem \ref{T1},
with pointers to their ingredients prepared in Sections 3, 4 and A.
\end{remark}

\begin{figure}[H]
  \centering
  \begin{tikzpicture}[
      node distance=0.75cm and 0.45cm,
      box/.style={draw, rectangle, rounded corners, align=left,
                  text width=4.3cm, inner sep=5pt, minimum height=1cm,
                  font=\small},
      tool/.style={draw, rectangle, rounded corners, align=left,
                   text width=6.9cm, inner sep=5pt, minimum height=1cm,
                   font=\small},
      arrow/.style={->, thick},
      dep/.style={->, thick, gray}
  ]

  % Left column: proof steps
  \node[box] (L1) {\textbf{Step 1:} A martingale};
  \node[box, below=of L1] (L2) {\textbf{Step 2:} A second martingale};
  \node[box, below=of L2] (L3) {\textbf{Step 3:} An error bound};
  \node[box, below=of L3] (L4) {\textbf{Step 4:} Synthesis};

  % Right column: tools used in the proof
  \path (L1.east) -- (L2.east) coordinate[midway] (M12);

  \node[tool, right=of M12] (R12) {
   Use Theorem~\ref{T2} (dynamical system) and Corollary~\ref{cor2.4} (on the semigroup $S$) for the application of \textbf{Proposition~\ref{T:434appl}}:\\
   Check assumptions using Proposition~\ref{P:locMartProb}\\ (verification of constraints)
    together with Proposition~\ref{P:bound}
    (exponential moment bounds for $m_1(X(t))$)\\
    %\hspace*{1.2em}$\hookleftarrow$ Lemma~\ref{l:linear}
    %{\small (auxiliary bound)}
  };

  \node[tool, right=of L3] (R3) {
  Control approximation error
  in Theorem~\ref{T1} using Proposition~\ref{P:bound} and
    Proposition~\ref{P:locMartProb}
  };

  \node[tool, right=of L4] (R4) {
    %Equation~\eqref{eq:hxiPoi} (Moment generating function of $\mathrm{Poi}(\theta)$) \\
    Evaluate semigroup calculations in $\text{Poi}(\theta)$ using Remark~\ref{rem:GSPoi}};

  % Tools feed into the steps
  \draw[arrow] (R12.west) -- (L1.east);
  \draw[arrow] (R12.west) -- (L2.east);
  \draw[arrow] (R3.west) -- (L3.east);
  \draw[arrow] (R4.west) -- (L4.east);

  % Dependencies between proof steps
  \draw[dep] (L1.west) to[bend right=45] (L4.west);
  %\draw[dep] (L2.west) to[bend right=30] (L4.west);
  \draw[dep] (L3.west) to[bend right=18] (L4.west);

  % Step 2 feeds into Step 3
  \draw[dep] (L2.west) to[bend right=18] (L3.west);
  %\draw[dep] (L2.south) -- (L3.north);

  \end{tikzpicture}
  \caption{Structure of the proof of Theorem~\ref{T1}.}
  \label{fig:proof-structure}
\end{figure}
\section{Simulations and outlook}
\label{ss:interpretation}

\subsection{Simulations}\label{simul}
\sloppy
In order to assess the scope of our findings, we implement a Wright-Fisher model as follows: For fixed $\mathfrak s \in [0,1)$, $\mathfrak u \geq 0$ and $N\in \mathbb N$, start $X^N(0) \sim \text{Multinomial}(N, \text{Poi}(\mathfrak u/ \mathfrak s))$. In order to generate $X^N(n+1)$ from $X(n)$, set $P(n+1) := \tfrac 1N X(n) \cdot S \cdot M$, where $S = (s_{ij})_{ij}$ with $s_{ij} = (1 - \mathfrak s)^j$ and $M = (m_{ij})_{ij}$ is a matrix with $m_{ij} = \text{Poi}(\mathfrak u)(j-i) = 1_{j \geq i} e^{-\mathfrak u} \mathfrak u^{j-i}/((j-i)!)$. After normalizing $P(n+1)$, pick $X(n+1) \sim \text{Multinomial}(N, P(n+1))$. 

As can be seen from Figure~\ref{fig1}, up to some time, which is related to the mean extinction time of $X_0$ (the ``mean first loss'' of the fittest class), the first order prediction from Corollary~\ref{cor} fits well with the simulations for $\mathbb E(X_0), \mathbb E(X_1), \mathbb E(X_2)$, and $\mathbb E(m_1(X))$. 

\begin{remark}[Model choice]
\begin{enumerate}
    \item Note that, because of the chosen mutation mechanism, there is a positive probability  that an individual gains more than one mutation in one generation.
    \item Let $x = \text{Poi}(\mu)$ for some $\mu \geq 0$. Then, $x\cdot S = \text{Poi}(\mu(1 - \mathfrak s)$ and $x \cdot M \cdot S = \text{Poi}(\mu(1 - \mathfrak s) + \mathfrak u)$. Therefore, for this model, in the limit $N \to \infty$, the equilibrium is $\text{Poi}(\mathfrak u/ \mathfrak s)$, without any assumptions on $\mathfrak s$ and $\mathfrak u$. 
\end{enumerate}   
\end{remark}

\begin{figure}[h!]
  \centering
  \begin{subfigure}[t]{0.48\textwidth}
      \centering
      \includegraphics[width=\linewidth]{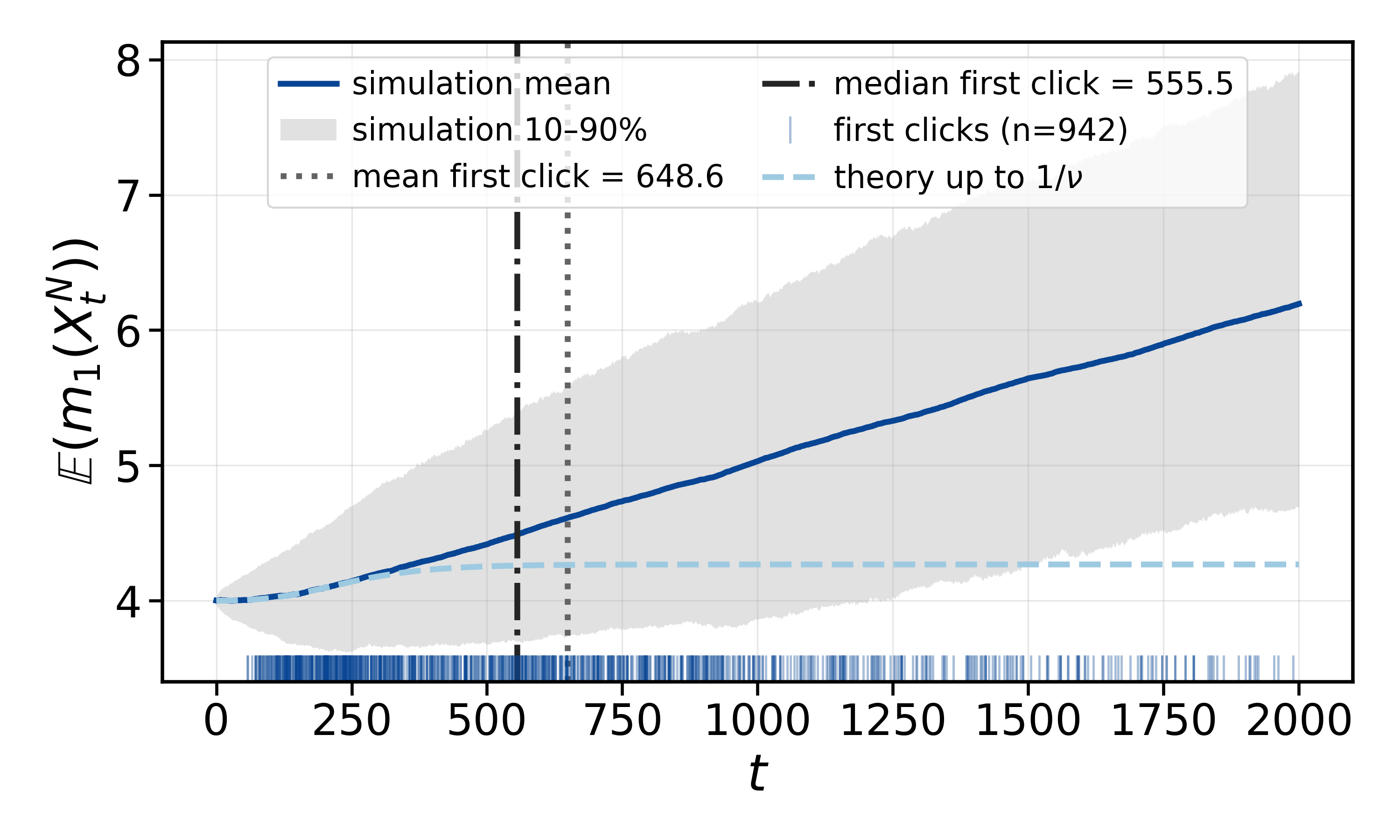}
      \caption{}
      \label{fig:com1A}
  \end{subfigure}
  \hfill
  \begin{subfigure}[t]{0.48\textwidth}
      \centering
      \includegraphics[width=\linewidth]{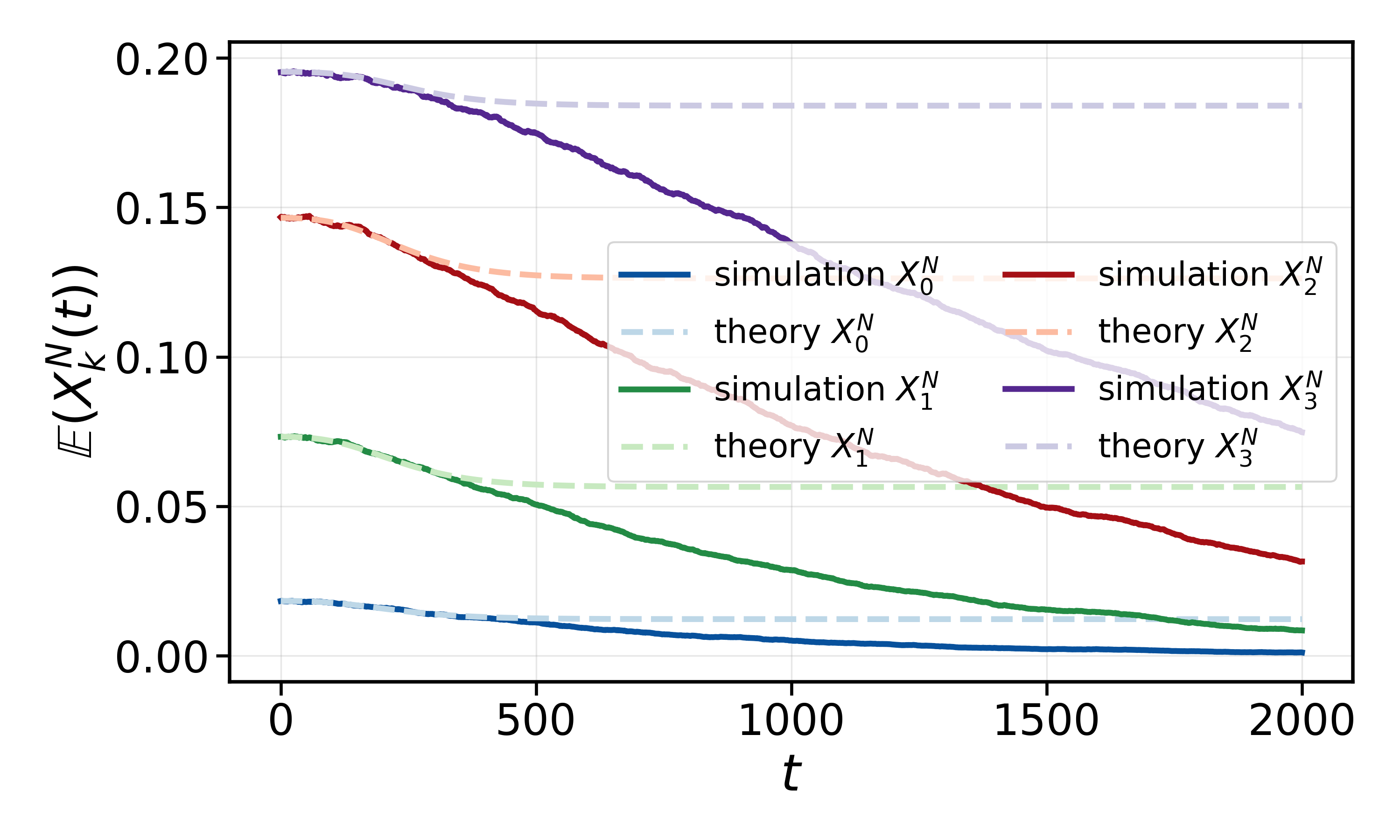}
      \caption{}
      \label{fig:com1B}
  \end{subfigure}
  \caption{\label{fig1}Comparison of the first order predictions from Corollary~\ref{cor} (``theory'') and the empirical values of $\mathbb E(m_1(X(t)))$ (A) and $\mathbb E(X_0(t)), \mathbb E(X_1(t)), \mathbb E(X_2(t))$ (B) for $\mathfrak u = 0.04, \mathfrak s = 0.01$ and $N = 10000$, obtained from a simulation of 1000 paths. Every simulation starts in $X(0)=$  Poi$(\theta)$ and ends at $t=2000$. Up to this time, 942 paths have clicked at least once.}
  \label{fig:com1}
\end{figure}

\subsection{Discussion and outlook} \label{interpret}
As a matter of fact, Corollary~\ref{cor} remains true with $(\alpha, \lambda , \nu)$ in~\eqref{eq:ratchetSDE} replaced by $(1, \theta, \nu \alpha)$ with $\nu \alpha \to \infty$, provided that $\theta$ remains constant; see Remark~\ref{rem:scaling}. Let us now assume that  
\begin{align}
    \label{eq:A}
    \text{\parbox{12cm}{for the parameter triple $(1,\theta, \nu\alpha)$ the assertions of Corollary~\ref{cor} also remain valid for $1\ll \theta = \zeta \log(\nu \alpha)$ with some $\zeta > 0$, and for some times $\tau \gg \log (\theta)$.}}
    \tag{A}
\end{align}
(Concerning this assumption see the discussion in Remark~\ref{rem:notable}.) Like in Remark~\ref{rem:scaling} we can then translate the assertions of Corollary~\ref{cor} back to the solution of~\eqref{eq:ratchetSDE} with $(\alpha, \lambda , \nu)$, by scaling back time using $dt = \tfrac 1\alpha \, dr$. For fixed $c > 0$ we also consider the times $\sigma :=  \tfrac{c}{\lambda} \log\theta$, and observe that
$$\sigma \ll \frac 1\alpha \log \theta \ll \tau.$$
Our assumption on $\tau$ implies that $\theta e^{-\tau \alpha} \downarrow 0$, hence we find from Corollary~\ref{cor} the following (see Section~\ref{rem:interpretation} for the calculations):
\begin{align}
    \label{eq:cor1app}
    \mathbb E\big(m_1(X(\sigma))\big) & \approx \theta + \frac{c^2 \log^2 \theta}{2\nu\lambda}, & \mathbb E\big(m_1(X(\tau))\big) & \approx \theta + \frac{1}{2\nu\alpha e^{-\theta}}, \\
    \label{eq:cor2app}
    \mathbb E\big(X_0(\sigma)\big) & \approx e^{-\theta} \Big(1 - \frac{\theta^{c-1}}{\nu\alpha}\Big), & \mathbb E\big(X_0(\tau)\big) & \approx e^{-\theta} \Big(1 - \frac{\log 2}{\nu\alpha e^{-\theta}}\Big),\\    
    \label{eq:cor14aapp}
    \mathbb E\big(\bar m_2(X(\sigma))\big) & \approx \theta - \frac{c \log\theta}{\nu\alpha}, & \mathbb E\big(\bar m_2(X(\tau))\big) & \approx \theta \Big(1 - \frac{e^{-\tau\alpha}}{\nu\alpha e^{-\theta}} \Big) = \theta + o\Big(\frac{1}{\nu\alpha e^{-\theta}} \Big), \\
    \label{eq:cor3bapp}
    \mathbb V(m_1(X(\sigma))) & \approx \frac{c\log\theta}{\nu\alpha}, & \mathbb V(m_1(X(\tau))) & \approx \frac{1}{4\nu\alpha e^{-\theta}},\\
    \label{eq:cor4app}
    \operatorname{Cov}(m_1(X(\sigma)), & X_0(\sigma)) \approx - \frac{e^{-\theta} \theta^c }{\nu\alpha}, & \operatorname{Cov}(m_1( X(\tau&)), X_0(\tau)) \approx - \frac{1}{2\nu\alpha}.
\end{align}
Note that by time $\sigma$, all quantities are still near their initial value for Poi$(\theta)$ provided that $\nu\alpha \gg \theta^{c-1}$; see \eqref{eq:cor2app}. So, we do not expect any clicks by time $\sigma$. However, let us consider $\psi  := \nu\alpha e^{-\theta}$ and look at times between $\sigma$ and $\tau$, i.e.\ we are considering the time-scale $\tfrac 1\alpha \log\theta$. We have two scenarios:

\begin{itemize}
    \item $\psi \gg 1$: We have $\mathbb E\big(m_1(X(\tau))\big)$, $\mathbb E\big(X_0(\tau)\big)$, $\mathbb E\big(\bar m_2(X(\tau))\big)$ are close to the Poi($\theta$) equilibrium. So, clicks by time $\tau$ are improbable, and in that sense the ratchet clicks rarely in this parameter regime.
    \item $\psi = \mathcal O(1)$: We see from \eqref{eq:cor2app} that $X_0(\tau)$ has (relative to its equilibrium size) already moved away from its deterministic equilibrium $e^{-\theta}$. This indicates that clicks by time $\tau$ have an appreciable probability, and that the ratchet clicks on a time-scale that is of slightly larger order than $\tfrac 1\alpha \log\theta$.
    (Here we assume that $\psi > \log 2$, because otherwise the approximation for $\mathbb E[X_0(\tau)]$ in \eqref{eq:cor2app} becomes meaningless.) 
\end{itemize}

\noindent
In order to see this behavior in simulations of the WF model of Section~\ref{simul}, we proceed as follows (recalling that $(\alpha, \lambda, \nu) \equiv (\mathfrak s, \mathfrak u, N)$ and $\theta = \mathfrak u/\mathfrak s$ in this setting): Fix $\psi > 0$, vary $\delta \in (0,1)$, set $ N\mathfrak s = \psi N^{\delta}$ as well as $\theta = \delta \log N$ (or $N\lambda = \delta \psi N^{\delta} \log N$). Then, we have $N\mathfrak s e^{-\theta} = \psi$, as required. As Figure~\ref{fig2} shows, we see that all average first click times are above the threshold of $\tfrac{\log\theta}{\mathfrak s}$. Moreover, the interpretation from above is well covered. Restricting the choice of simulations to cases where $\theta > 1$ (since we assume that $\theta$ is large) and $\mathfrak s < 0.2$ (such that the diffusion approximation is valid), we find a stark contrast between $\psi \approx 1$ in Figure~\ref{fig2}(A) and $\psi > 1$ in Figure~\ref{fig2}(B), if reporting click times in the time scale $\tfrac{\log\theta}{\mathfrak s}$. This finding is valid for $N\mathfrak s = 10,..., 2000$. Some more information, e.g.\ exact values, as well as $\mathfrak s$ and $\mathfrak u$ for these simulations, are given in Table~\ref{tab1}. Altogether, these simulations confirm the interpretation from above.

\begin{figure}[h!]
  \centering
  \begin{subfigure}[t]{0.48\textwidth}
      \centering
      \includegraphics[width=\linewidth]{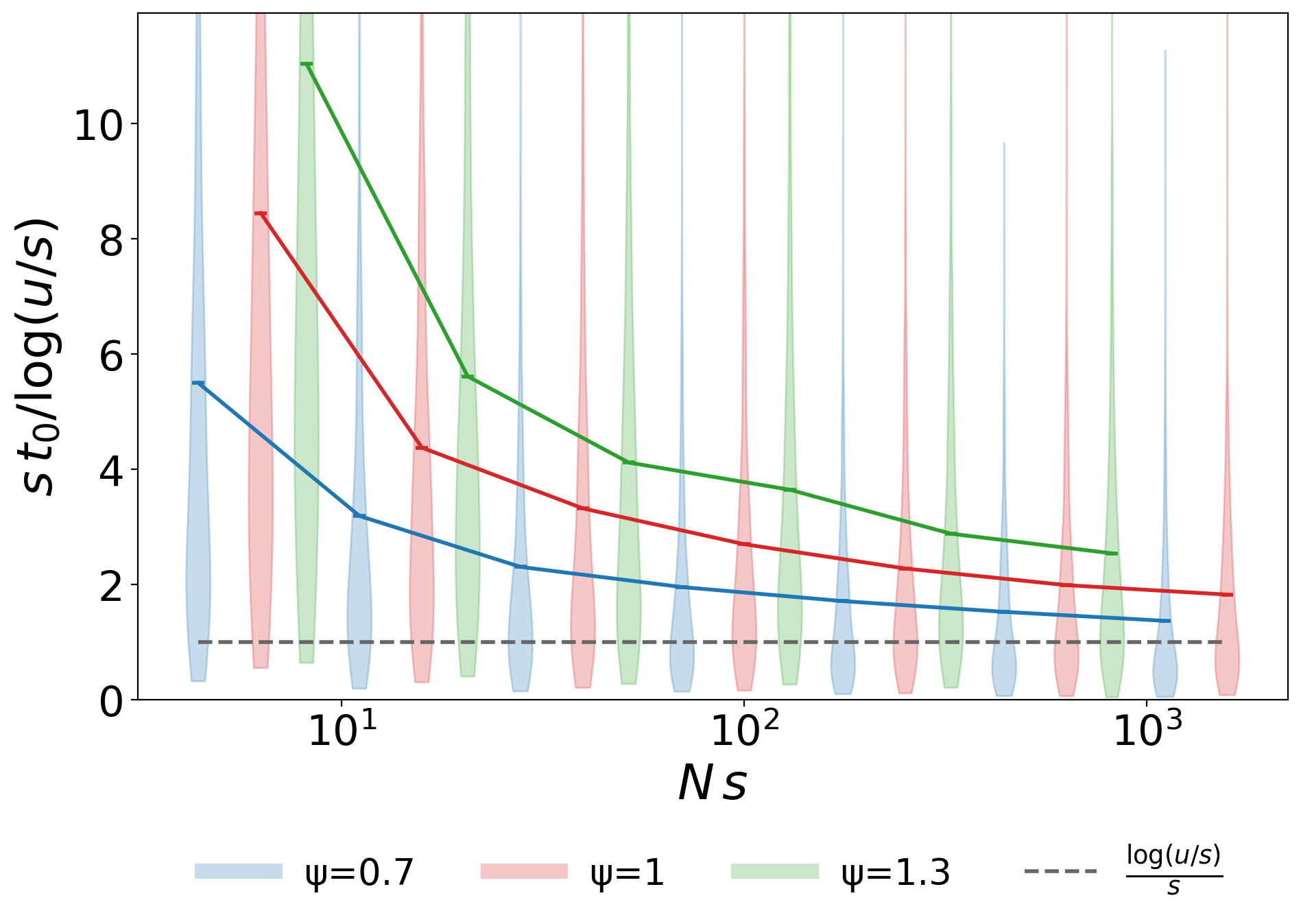}
      \caption{}
      \label{fig:com2A}
  \end{subfigure}
  \hfill
  \begin{subfigure}[t]{0.48\textwidth}
      \centering
      \includegraphics[width=\linewidth]{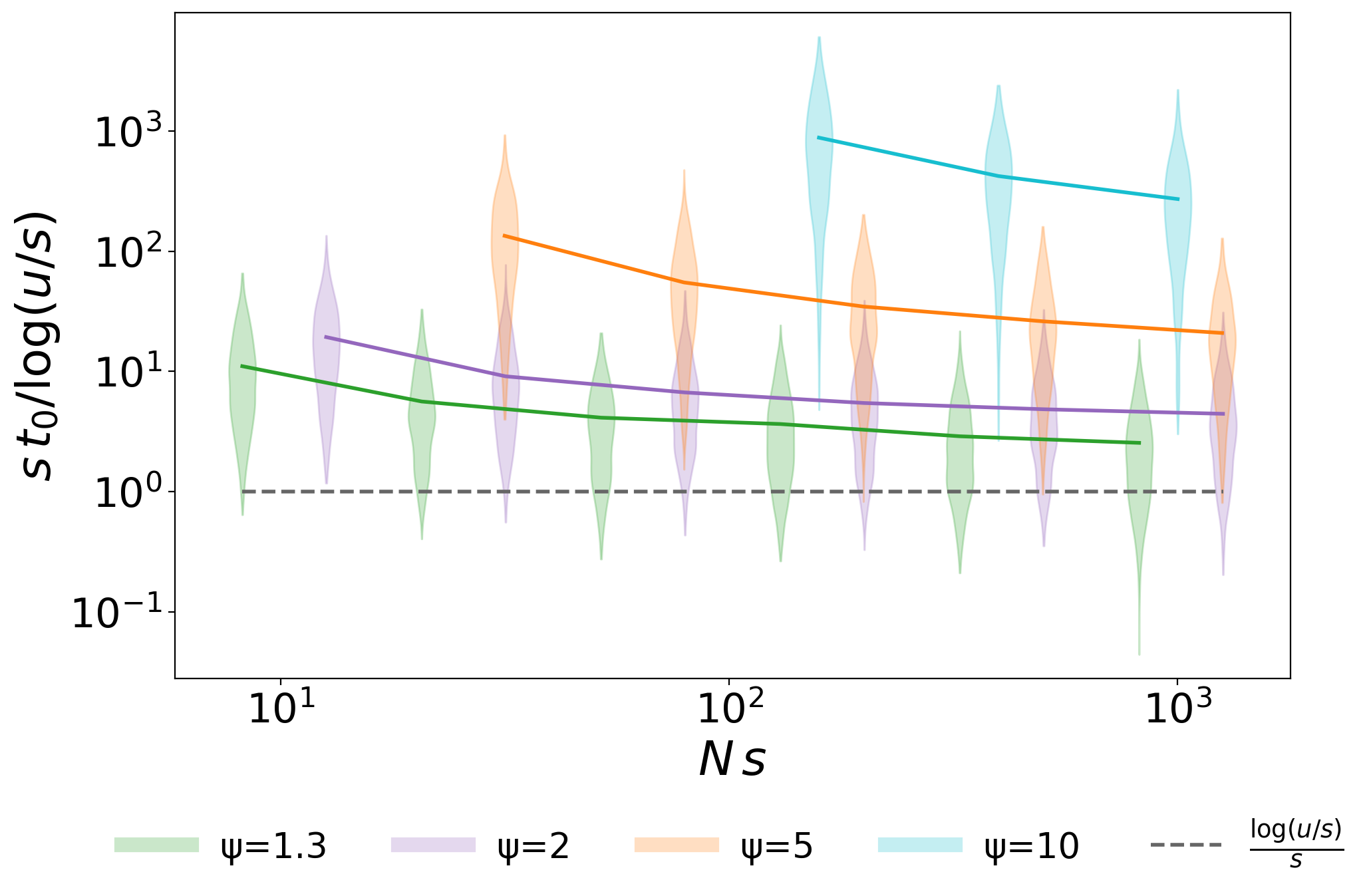}
      \caption{}
      \label{fig:com2B}
  \end{subfigure}
  \caption{\label{fig2}Simulated values of times of first clicks for population size $N=10^4$ in $10^3$ simulations per parameter combination, when starting the system in Poi$(\theta)$. Parameter combinations where not all simulations showed a click by time $10^6$, or $\theta < 1$ or $\mathfrak s > 0.2$ are not shown. (A) For $\psi = \mathcal O(1)$, the time to the first click is larger than $\tfrac{\log\theta}{\mathfrak s}$. (B) For $\psi \gg 1$, clicks are much rarer; note the logarithmic scale on the $y$-axis. Some more information is contained in Table~\ref{tab1} in Section~\ref{simtable}.}
  \label{fig:com2}
\end{figure}

\begin{remark}[On unbounded mutation-selection ratios and time horizons]
\label{rem:notable}
So far we were not able to verify the assumption \eqref{eq:A} stated at the beginning of Section~\ref{interpret}. the main obstacle being that Proposition~\ref{P:bound} in the $(1,\theta,  \nu\alpha )$-scaling  requires a time horizon that must  remain bounded as $\nu \alpha \to \infty$. The integrability of $1/(g_\xi(X_t))^a$ with $a>0$ from \eqref{eq:morebounds} (which is essential for the applicability of Proposition~\ref{T:434appl}) is currently ensured via the somewhat crude inequality~\eqref{eq:jens}, which still leaves hope for the validity of assumption \eqref{eq:A}. 
\end{remark}

\begin{remark}[Outlook]
Two extensions of  the analysis of Muller's ratchet offer themselves, based on the techniques  used and developed in this paper:
\begin{itemize}
   \item Approximations of higher order: For small sampling rate $\tfrac 1 \nu$ we have computed a first order approximation in Muller's ratchet, which is precise up to an approximation error of order $\mathcal O(\tfrac 1 {\nu^2})$.  It would as well be possible -- using iterations of the Duhamel formula~\eqref{Duhamelform} -- to compute higher order approximations. (For this, Step~3 in the proof of Theorem~\ref{T1} would have to be refined.) 
   \item Starting in a different initial state: In Theorem~\ref{T1} and Corollary~\ref{cor} the initial state is $X(0)=\text{Poi}(\theta)$. In principle, this can be relaxed. (Technically, all applications of \eqref{fixedpoint} need to be replaced by applications of Corollary~\ref{cor2.4}.)
\end{itemize}
We postpone these extensions to future research, and focus here on developing a martingale-based perturbation approach for the analysis of Muller's ratchet that will lead to the proof of Theorem~\ref{T1}.
\end{remark}

\section{Foundations}
\label{S:mart}
We study the system \eqref{eq:ratchetSDE} via a well-posed martingale problem, which we introduce in Section~\ref{ss:mp}. Afterwards, in Section~\ref{ss:dyn} we give some concrete calculations for the system with $\nu=\infty$. 
\subsection{Notation and a well-posed martingale problem}
\label{ss:mp}
We first introduce some notation, before we can name a theorem about martingale problems for the ratchet. 

\begin{remark}[Notation]
  For a complete and separable metric space $(E,r)$, we denote by $\mathcal P(E)$ the space of probability measures on (the Borel sets of) $E$. Moreover, $\mathcal M(E)$ ($\mathcal B(E)$) is the space of real-valued, measurable (and bounded) functions. If $E\subseteq \mathbb R^{\mathbb N_0}$, we let $\mathcal C^k(E)$ ($\mathcal C_b^k(E)$) be the (bounded), $k$ times partially continuously differentiable functions (with bounded derivatives). 
\end{remark}

\begin{definition}[Martingale problem]
Let \label{def:mp} $(E,r)$ be a complete and separable metric space, $\mathbb P_0 \in\mathcal P(E)$, $\mathcal F \subseteq \mathcal C(E)$, and $G$ a linear operator on  ${\mathcal C}(E)$ with domain $\mathcal F$. A (distribution $\mathbb P$ of an) $E$-valued stochastic process $X=(X(t))_{t\geq 0}$ is called a solution of the $(E, \mathbb P_0, G,\mathcal F)$ (local) martingale problem if $X_0$ has distribution $\mathbb P_0$, $\mathcal X$ has paths in the space ${\mathcal D}_{E}([0,\infty))$, almost surely, and for all $f\in\mathcal F$,
\begin{equation}\label{13def}
M_f := \Big(f(X(t))-f(X_0) - \int_0^t G f(X(s)) ds \Big)_{t\geq 0}
\end{equation}
is a (local) $\mathbb{P}$-martingale with respect to the canonical filtration. Moreover, the $(E, \mathbb P_0, G, \mathcal F)$ (local) martingale problem is said to be well-posed if there is a unique solution $\mathbb{P}.$
\end{definition}

\noindent
In order to state the martingale problem for Muller's ratchet, we need a state space, the domain of the generator, and the generator itself.

\begin{definition}[Ingredients of the Martingale problem for Muller's ratchet]
  \label{def:F} \mbox{}
    \begin{enumerate}
    \item State space: We write, for some $\chi > 0$,
    \begin{align*}
      \mathbb S&:= \Big\{x \in \mathbb R_+^{\mathbb N_0}: \sum_{k=0}^\infty x_k = 1\Big\}, \\
      \mathbb S_\chi &:=\Big\{x \in \mathbb S: \sum_{k=0}^\infty x_ke^{\chi k} < \infty \Big\}. 
    \end{align*}
    Note that $\mathbb S_{\chi}$ is a suitable state space because there is a metric $r_{\chi}$ (defined by  $r_{\chi}(x, y) := \sum_{k=0}^\infty e^{{\chi} k} |x_k-y_k|)$ that turns the space $\mathbb S_{\chi}$ into a complete and separable metric space (see Remark 3.2 of \cite{pfaffelhuber2012muller}). 
    \item Domain of the generator: For $x\in \mathbb S$ and $\varphi_1,...,\varphi_n \in \mathcal B(\mathbb N_0)$, set
    \begin{align}
        \label{eq:fphi}
        f_{\varphi_1,...,\varphi_n} (x) := \langle x, \varphi_1\rangle \cdots \langle x, \varphi_n\rangle, \qquad \langle x, \varphi\rangle := \sum_k x_k \varphi(k).
    \end{align}
    We define 
    \begin{align}
      \label{eq:121}
      \mathcal F := & \text{algebra generated by } \{f_{\varphi_1,...,\varphi_n}: n\in\mathbb N,
      \varphi_1,...,\varphi_n \in \mathcal B(\mathbb N_0)\} \subseteq
      \mathcal C_b(\mathbb S).
     \end{align}
     \item Generator:
      We define the operator $G$ with domain $\mathcal F$ by
      \begin{align}
        \notag 
        G & = G^{(1)} + G^{(2)},\\
        \label{eq:G1}
        G^{(1)} f(x) & = \sum_{k=0}^\infty \Big(\lambda (x_{k-1} - x_k) + \alpha  \sum_{j=0}^\infty(j-k)x_jx_k \Big)\frac{\partial f(x)}{\partial x_k}, \\
        \label{eq:G2}
        G^{(2)}f(x) & = \frac 1{2\nu} \sum_{k,\ell = 0}^\infty x_k ( \delta_{kl} - x_\ell) \frac{\partial^2 f(x)}{\partial x_k \partial x_\ell}.
      \end{align}
      with $\alpha, \lambda, \nu > 0$.
  \end{enumerate}
\end{definition}

\begin{proposition}  [Well-posed martingale problem, \eqref{eq:ratchetSDE}, and exponential moments]\label{martprob}
    Let $x \in \mathbb S_{\chi}$. Then, the $(\mathbb S_{\chi}, \delta_x, G, \mathcal F)$ local martingale problem is well-posed, and its solution is a process with paths in $\mathcal C_{\mathbb S_{\chi}}([0, \infty))$. This process is the unique weak solution to \eqref{eq:ratchetSDE} with $X(0) = x$. In addition, if $f = f_{\varphi_1,..., \varphi_n}$ is of the form~\eqref{eq:fphi} with $\varphi_i(.) \leq Ce^{\xi .}$ for some $C>0$ and $\xi \in \mathbb R$, then $M_f$ as given in \eqref{13def}, is a martingale. 
\end{proposition}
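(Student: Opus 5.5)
The plan is to lean on the existing literature for well-posedness and then verify the martingale property for the exponentially growing test functions directly.

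\textbf{Well-posedness and identification with the SDE.} First I cite the known results. In \cite{pfaffelhuber2012muller} (exponential moments) and \cite{audiffren2013muller}, existence and uniqueness of weak solutions of \eqref{eq:ratchetSDE} are shown, started in $x\in\mathbb S_\chi$, with paths in $\mathcal C_{\mathbb S_\chi}([0,\infty))$. The link to the martingale problem is Itô's formula. For $f\in\mathcal F$ (a polynomial in finitely many bounded linear functionals $\langle x,\varphi\rangle$), the drift of $f(X)$ is $G^{(1)}f$. The quadratic covariations $d\langle X_k,X_\ell\rangle=\frac1\nu X_k(\delta_{k\ell}-X_\ell)\,dt$ then produce $G^{(2)}f$.

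Conversely, a solution of the local martingale problem yields a weak solution of the SDE by the standard representation theorem. One uses $\langle x,\varphi\rangle$ with $\varphi=1_{\{k\}}$ to recover the coordinate semimartingales and their covariations, and then builds the Brownian family $W_{k\ell}$ on an enlarged space. Uniqueness for one formulation therefore transfers to the other, which gives well-posedness of the $(\mathbb S_\chi,\delta_x,G,\mathcal F)$ local martingale problem. Since $\mathcal F\subseteq\mathcal C_b$, every $M_f$ for $f\in\mathcal F$ is a bounded-drift local martingale. Because $f$ and $Gf$ are bounded (the selection term involves $m_1(x)$, handled by localisation), $M_f$ is in fact a local martingale.

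\textbf{The martingale claim for exponentially growing $\varphi_i$.} Let $f=f_{\varphi_1,\dots,\varphi_n}$ with $|\varphi_i(k)|\le Ce^{\xi k}$. Localise with $\tau_R:=\inf\{t:\langle X(t),e^{\xi'\cdot}\rangle\ge R\}$, where $\xi'>0$ is chosen large enough. Then $M_f(\cdot\wedge\tau_R)$ is a martingale. This follows either by truncating $\varphi_i$ to $\varphi_i\wedge m$ and passing to the limit, or by applying Itô directly to the SDE.

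It remains to remove the localisation. For this it suffices to show
\[
\mathbb E\Big[\sup_{s\le t}|f(X(s))|+\int_0^t|Gf(X(s))|\,ds\Big]<\infty .
\]
Both $|f|$ and $|Gf|$ are bounded by a polynomial in $\langle x,e^{\xi\cdot}\rangle$ and $m_1(x)$. The term $G^{(1)}$ involves $\sum_j jx_j$ times $\langle x,k\varphi\rangle\lesssim\langle x,e^{(\xi+\epsilon)\cdot}\rangle$. By Hölder, everything reduces to bounds on $\mathbb E\sup_{s\le t}\langle X(s),e^{\zeta\cdot}\rangle^p$ for suitable $\zeta,p$.

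To get these bounds I apply Itô to $h_\zeta(X)=\langle X,e^{\zeta\cdot}\rangle$. Its drift is
\[
\lambda(e^\zeta-1)h_\zeta+\alpha\big(m_1h_\zeta-\langle X,k e^{\zeta k}\rangle\big)\le \lambda(e^\zeta-1)h_\zeta ,
\]
because $k\mapsto k$ and $k\mapsto e^{\zeta k}$ are comonotone, so the covariance term is $\le0$. Hence $h_\zeta(X)$ is dominated by a positive supermartingale times $e^{ct}$. Combined with BDG for the noise term (quadratic variation $\frac1\nu(h_{2\zeta}-h_\zeta^2)$), this gives $L^p$ bounds on the supremum. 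Higher powers follow inductively, since the quadratic variation involves $h_{2\zeta}$, and this needs initial finiteness of $\langle x,e^{c\zeta\cdot}\rangle$.

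\textbf{Main obstacle.} The hard step is the moment control just described. The initial condition only gives $\langle x,e^{\chi\cdot}\rangle<\infty$, whereas $\xi$ is arbitrary. I expect to need either $\xi$ small relative to $\chi$, or to use that the relevant start (e.g.\ Poi$(\theta)$) lies in $\bigcap_\chi\mathbb S_\chi$. The fallback is to use the positivity and comonotonicity argument above, which yields $\mathbb E\,h_\zeta(X(t))\le e^{\lambda(e^\zeta-1)t}h_\zeta(x)$ without BDG. After that, uniform integrability of the stopped martingales $M_f(\cdot\wedge\tau_R)$ follows via domination by $h_{n\xi+\epsilon}$ and Doob's inequality for the nonnegative submartingale part.
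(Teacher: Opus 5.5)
The paper contains no proof of Proposition~\ref{martprob}. It is imported from \cite{pfaffelhuber2012muller}, together with \cite{audiffren2013muller}; see the introduction. Your first paragraph cites existence and uniqueness for \eqref{eq:ratchetSDE} and passes between the SDE and the $\mathcal F$-local martingale problem via It\^o's formula and the representation theorem, so it follows the same route.

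For the last claim you go beyond the paper with a self-contained argument, and its core is correct. With $h_\zeta(x):=\langle x,e^{\zeta\cdot}\rangle$ one has $G^{(2)}h_\zeta=0$ and $G^{(1)}h_\zeta=\lambda(e^\zeta-1)h_\zeta-\alpha\operatorname{Cov}_x(k,e^{\zeta k})\le\lambda(e^\zeta-1)h_\zeta$. Compared with the bare citation, this gives the explicit bound $\mathbb E\,h_\zeta(X(t))\le e^{\lambda(e^\zeta-1)t}h_\zeta(x)$ for $\zeta\le\chi$, and it shows which $\xi$ are admissible.

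On that last point you should commit instead of hedging. Read with unbounded $\varphi_i$ (otherwise the growth condition is void) and arbitrary $\xi\in\mathbb R$, the last sentence of the proposition is false. Take $n=1$, $\varphi(k)=e^{\xi k}$ with $\xi>\chi$, and $x\in\mathbb S_\chi\setminus\mathbb S_\xi$: already $f(x)=\infty$. What your argument proves is the claim for $|\varphi_i|\le Ce^{\xi\cdot}$ with $n\xi<\chi$, and you should state this as the hypothesis. The observation $\mathrm{Poi}(\theta)\in\bigcap_\chi\mathbb S_\chi$ only shows the restriction is harmless in the paper's application; it proves nothing for general $x\in\mathbb S_\chi$.

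The restriction is already needed in your localisation. $\tau_R$ must be built from some $h_{\xi'}$ with $\xi'\le\chi$. Otherwise $\tau_R\equiv 0$ is possible, and continuity of $t\mapsto h_{\xi'}(X(t))$ is not guaranteed by paths in $\mathcal C_{\mathbb S_\chi}$.

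Several further corrections are needed:
\begin{itemize}
\item $Gf$ is not bounded for $f\in\mathcal F$, because of the $m_1$-term.
\item $|Gf|$ is not a polynomial in $h_\xi$ and $m_1$. Indeed $G^{(1)}\langle x,\varphi\rangle=\lambda\langle x,\varphi(\cdot+1)-\varphi\rangle+\alpha(m_1\langle x,\varphi\rangle-\langle x,k\varphi\rangle)$, and $G^{(2)}f$ contains $\langle x,\varphi_i\varphi_j\rangle$.
\item The efficient tool here is Chebyshev's association inequality $h_ah_b\le h_{a+b}$ for $a,b\ge0$, rather than H\"older, together with $k\le e^{\epsilon k}/\epsilon$. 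For $\xi\ge0$ this gives $|f|\le Ch_{n\xi}$ and $|Gf|\le Ch_{n\xi+\epsilon}$. For $\xi<0$ the $\varphi_i$ are bounded and only $m_1$ needs control.
\end{itemize}

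BDG and the induction over powers can then be dropped; your fallback suffices.
\begin{itemize}
\item Since $e^{\zeta(k\wedge m)}$ is bounded and nondecreasing, the $\mathcal F$-problem alone makes $e^{-\lambda(e^\zeta-1)t}\langle X(t),e^{\zeta(\cdot\wedge m)}\rangle$ a nonnegative local supermartingale, hence a supermartingale.
\item Monotone convergence in $m$ transfers this to $Y_t:=e^{-\lambda(e^\zeta-1)t}h_\zeta(X(t))$.
\item Take $\zeta:=n\xi+\epsilon\le\chi$. Fixed-time bounds give $\mathbb E\int_0^t|Gf(X(s))|\,ds<\infty$.
\item Doob's maximal inequality $\mathbb P(\sup_{s\le t}Y_s\ge y)\le h_\zeta(x)/y$ applies to this nonnegative \emph{super}martingale (not a submartingale, as you wrote). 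Combined with Jensen's $h_{n\xi}\le h_\zeta^{n\xi/\zeta}$, whose exponent is $<1$, it gives $\mathbb E\sup_{s\le t}|f(X(s))|<\infty$.
\item Dominated convergence in $M_f(t\wedge\tau_R)$ then finishes the proof.
\end{itemize}
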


\subsection{The dynamical system}
\label{ss:dyn}
In this section, we solve the system of ODEs
\begin{align}
  \label{eq:dynsys1}
  \dot x_k & = \lambda(x_{k-1}-x_k) + \alpha \sum_{j=0}^\infty (j - k) x_j x_k,
\end{align}
in other words we give the solution of the first order $G^{(1)}$-martingale problem with $G^{(1)}$ from \eqref{eq:G1}.

\noindent
This is done using some martingale property which is stated in the
next section. 

%We use throughout 
%\begin{align}
%    g_{\alpha \xi}(x) := \sum_{k=0}^\infty e^{-\alpha \xi k} x_k. \label{eq:hxi}
%\end{align}
Recall that for $x = \text{Poi}(\vartheta)$, $\vartheta > 0$,
\begin{align}
    \label{eq:hxiPoi}
    g_{\xi}(\text{Poi}(\vartheta)) & = e^{-\vartheta} \sum_{k=0}^\infty e^{- \xi k } \frac{\vartheta^k}{k!} = \exp\big(-\vartheta ( 1 - e^{- \xi})\big).
\end{align}

\noindent
Here, we give a new proof of the solution of \eqref{eq:dynsys1} (see Theorem~2 in \cite{pfaffelhuber2012muller}, and Proposition 4.1 in \cite{EtheridgePfaffelhuberWakolbinger2007}). 

\begin{theorem}[Solution of \eqref{eq:dynsys1}]\label{T2}
  Started in $x \in \mathbb S$, the system \eqref{eq:dynsys1} has the solution
  \begin{align}\label{eq:sol0}
    x_k(t) & = \frac{1}{g_{\alpha t}(x)} \sum_{i=0}^k x_i e^{-\alpha i t}
      \frac{\theta^{k-i}}{(k-i)!} (1-e^{-\alpha t})^{k-i} \cdot \exp\Big(
    -\theta(1-e^{-\alpha t})\Big).
  \end{align}
\end{theorem}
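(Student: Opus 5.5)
The plan is to work with the moment generating functions $g_\xi$ from \eqref{eq:gxi} and show that, along a solution $x(t)$ of \eqref{eq:dynsys1}, the family $\xi\mapsto g_\xi(x(t))$ obeys a closed linear first-order PDE in $(t,\xi)$. We then solve that PDE by characteristics and read off the coefficients $x_k(t)$ from the resulting explicit formula.

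First, differentiate $g_\xi(x(t))=\sum_k x_k(t)e^{-\xi k}$ in $t$ using \eqref{eq:dynsys1}. The mutation term contributes $\lambda\sum_k (x_{k-1}-x_k)e^{-\xi k}=-\lambda(1-e^{-\xi})g_\xi$. The selection term contributes
\[
\alpha\big(m_1(x)\,g_\xi - \textstyle\sum_k k x_k e^{-\xi k}\big)=\alpha\big(m_1(x)g_\xi+\partial_\xi g_\xi\big),
\]
with $m_1(x)=-\partial_\xi g_\xi|_{\xi=0}$. Passing to $h(t,\xi):=\log g_\xi(x(t))$, this gives
\[
\partial_t h=\alpha\,\partial_\xi h-\alpha\,\partial_\xi h(t,0)-\lambda(1-e^{-\xi}).
\]
This is linear transport plus a term depending only on $t$. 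For $x\in\mathbb S$ (finite first moment is needed for $m_1$), $g_\xi$ is smooth for $\xi>0$ and the interchange of sum and derivative is justified by dominated convergence. Alternatively, one may assume $x\in\mathbb S_\chi$ first and extend by continuity.

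Second, we verify the claimed formula directly rather than solving the PDE abstractly, since uniqueness is easier that way. Summing \eqref{eq:sol0} against $e^{-\xi k}$ and using the Cauchy product gives the candidate
\[
g_\xi(x(t))=\frac{g_{\xi+\alpha t}(x)}{g_{\alpha t}(x)}\exp\Big(-\theta(1-e^{-\alpha t})(1-e^{-\xi})\Big).
\]
Plugging this into the PDE is a short computation. The terms $\alpha g'_{\xi+\alpha t}/g_{\xi+\alpha t}$ match $\alpha\partial_\xi h$, and $-\alpha g'_{\alpha t}/g_{\alpha t}$ matches $-\alpha\partial_\xi h(t,0)$. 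The exponential factor produces exactly $-\lambda(1-e^{-\xi})$, using $\theta\alpha=\lambda$:
\[
\partial_t\big[-\theta(1-e^{-\alpha t})(1-e^{-\xi})\big]-\alpha\,\partial_\xi\big[\cdot\big]+\alpha\,\partial_\xi\big[\cdot\big]_{\xi=0}=-\lambda(1-e^{-\xi}).
\]
This confirms the identity $-\lambda e^{-\alpha t}(1-e^{-\xi})-\lambda(1-e^{-\alpha t})e^{-\xi}+\lambda(1-e^{-\alpha t})=-\lambda(1-e^{-\xi})$. Since the $g_\xi$, $\xi\ge0$, determine $x$ (via the Laplace transform or power series in $e^{-\xi}$), the candidate defined by \eqref{eq:sol0} satisfies the ODE componentwise. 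Equivalently, one can check \eqref{eq:dynsys1} coefficientwise from the generating-function identity.

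The main obstacle is uniqueness, i.e.\ showing that \emph{the} solution coincides with this one. Solutions of \eqref{eq:dynsys1} in $\mathbb S$ could a priori lose mass to infinity, or $m_1$ could blow up. I would handle this in two steps.

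\begin{itemize}
\item[(a)] Substitute the unknown function $a(t):=\alpha\int_0^t m_1(x(s))\,ds$. The equation for $h$ then becomes pure transport with explicit forcing, so $h(t,\xi)=h(0,\xi+\alpha t)-\lambda\int_0^t(1-e^{-\xi-\alpha(t-s)})\,ds-a(t)$ by characteristics.
\item[(b)] Normalization $g_0=1$, i.e.\ $h(t,0)=0$, then forces $a(t)=\log g_{\alpha t}(x)-\theta(1-e^{-\alpha t})$, which fixes everything uniquely.
\end{itemize}

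This characteristics argument applies to any solution with $m_1(x(s))$ locally integrable. Alternatively, following the paper's hint, I would phrase the argument via the $G^{(1)}$ martingale problem: $t\mapsto S_{T-t}g_\xi(x(t))$ is constant for $f=g_\xi$, giving the same identity.
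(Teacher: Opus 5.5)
Your proof is correct, and it takes a genuinely different route from the paper's.

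\textbf{The paper's route.} The paper never writes an equation for $g_\xi$. It represents $x_k(t)$ as $\mathbb P(K_t=k\mid K_t\neq\dagger)$ for a particle that gains mutations at rate $\lambda$ and is killed at rate $\alpha k$. An infinitesimal $dt$-computation shows that this conditional law solves \eqref{eq:dynsys1}. The formula \eqref{eq:sol0} then follows by counting paths: each of the $i$ initial and $k-i$ new mutations carries an independent rate-$\alpha$ clock that must not ring before $t$, and one normalizes at the end. The generating-function formula \eqref{eq:sol2} comes only afterwards, in Corollary~\ref{cor2.4}, derived from \eqref{eq:sol0}.

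\textbf{Your route.} You reverse this order. You derive the closed transport equation for $\log g_\xi(x(t))$ and solve it along characteristics, with the unknown $\alpha\int_0^t m_1$ fixed by $g_0\equiv 1$. Then you read off the coefficients.

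\textbf{What each approach buys.} The paper's argument explains where the Poisson/thinning structure comes from and needs no prior guess. However, it only exhibits a solution and says nothing about uniqueness. Yours is purely analytic and gives uniqueness among solutions in $\mathbb S$ with locally integrable $m_1$. That is exactly the condition making the interchange of $\partial_t$ and $\sum_k$ legitimate, in integrated form, since $\sum_k|\dot x_k|e^{-\xi k}\le 2\lambda+\alpha m_1+\alpha/(e\xi)$. It also produces \eqref{eq:sol2} directly, which is the form used later in the paper.

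\textbf{Corrections for the write-up.} Fix two sign slips; neither affects the method.
\begin{enumerate}
\item In your scalar identity, the last two terms should be $+\lambda(1-e^{-\alpha t})e^{-\xi}-\lambda(1-e^{-\alpha t})$. As written, the left side equals $\lambda(1-2e^{-\alpha t})(1-e^{-\xi})$, not $-\lambda(1-e^{-\xi})$.
\item In (a), the correct formula is $h(t,\xi)=h(0,\xi+\alpha t)-\lambda\int_0^t(1-e^{-\xi-\alpha(t-s)})\,ds+a(t)$, with $+a(t)$. Then (b) gives $a(t)=\lambda t-\theta(1-e^{-\alpha t})-\log g_{\alpha t}(x)$. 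The $\lambda t$ terms cancel and you recover \eqref{eq:sol2}.
\end{enumerate}
Your closing remark that $S_{T-t}g_\xi(x(t))$ is constant adds nothing on its own. It is just the semigroup property, and it yields a formula only once $S$ is known independently. Drop it or present it as a reformulation.
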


\begin{proof}
  We use a particle system in our proof. Precisely, let $(K_t)_{t\geq
    0}$ be an $\mathbb N_0 \cup \{\dagger\}$-valued, pure Markov jump process starting in $K_0 \sim x(0)$. Here $, \dagger$ is an absorbing state, and if in state $k \in \mathbb N$, it
  \begin{enumerate}
  \item jumps to $k+1$ at rate $\lambda$,
  \item gets absorbed (i.e.\ jumps to $\dagger$) at rate $k\alpha$.
  \end{enumerate}
  Then, it is straight-forward to see that
  \begin{align}
  \label{eq:sfts}
  x_k(t) := \mathbb P(K_t=k|K_t\neq \dagger)
  \end{align}
  solves \eqref{eq:dynsys1}: We compute
  \begin{align*}
  \mathbb P & (K_{t + dt} = k | K_{t + dt} \neq \dagger) = \frac{\mathbb P(K_{t} = k) (1 - dt (\lambda + \alpha k)) + \mathbb P(K_{t} = k-1) \lambda dt }{ \sum_{j=0}^\infty \mathbb P(K_t = j)(1 - dt \alpha j)} + \mathcal O(dt^2) \\ & = 
  \frac{\mathbb P(K_t = k) + dt \lambda (\mathbb P(K_t = k-1) - \mathbb P(K_t = k)) - dt \alpha k \mathbb P(K_t = k)}{\mathbb P(K_t \neq \dagger) (1 - dt \alpha \sum_{j=0}^\infty j \mathbb P(K_t=j) / \mathbb P(K_t \neq \dagger)} \\ & = 
  \mathbb P(K_{t} = k | K_{t} \neq \dagger)\Big(1 + dt \alpha \sum_{j=0}^\infty j \mathbb P(K_t = j | K_t \neq \dagger)\Big) \\ & \qquad + dt \lambda (\mathbb P(K_t = k-1) - \mathbb P(K_t = k)) - dt \alpha k \mathbb P(K_t = k | K_t \neq \dagger)
  \end{align*}
  Rearranging gives
  \begin{align*}
      \frac{d}{dt} \mathbb P & (K_{t + dt} = k | K_{t + dt} \neq \dagger) = -\lambda\Big(\mathbb P (K_{t} = k | K_{t} \neq \dagger) - \mathbb P (K_{t} = k-1 | K_{t} \neq \dagger)\Big) \\ & \qquad \qquad \qquad \qquad \qquad \qquad \qquad + \alpha \sum_{j = 0} (j - k) \mathbb P(K_t = j | K_t \neq \dagger)\mathbb P(K_t = k | K_t \neq \dagger),
  \end{align*}
  i.e.\ $\mathbb P(K_t = k | K_t \neq \dagger)$ solves \eqref{eq:dynsys1}. 
  
  The right hand side of \eqref{eq:sfts} can be computed directly. When starting with $K_0=i\leq k$, a total of $k-i$ mutations must be gained by time $t$. In addition, every mutation starts an exponential clock at rate $\alpha$ which must be over only after time $t$. Hence, writing '$\sim$' for equality up to constants   (not depending on $k$), 
  \begin{equation}\label{eq:sol1}
    \begin{aligned}
      x_k(t) & \sim \sum_{i=0}^k x_i(0) e^{-\alpha i t} \int_0^t dt_1
      \cdots \int_{t_{k-i-1}}^t dt_{k-i} \lambda^{k-i} e^{-\alpha
        (t-t_1)} \cdots e^{-\alpha (t-t_{k - i})} \\ & = \sum_{i=0}^k
      x_i(0) e^{-\alpha i t} \frac{\lambda^{k-i}}{(k-i)!}
      \Big(\int_0^t e^{-\alpha(t-s)}ds\Big)^{k-i} \\ & = \sum_{i=0}^k
      x_i(0) e^{-\alpha i t} \frac{\theta^{k-i}}{(k-i)!} (1-e^{-\alpha
        t})^{k-i}.
    \end{aligned}
  \end{equation}
  Since 
  \begin{align*}
    \sum_{k=0}^\infty & \sum_{i=0}^k x_i e^{-\alpha i t}
    \frac{\theta^{k-i}}{(k-i)!} (1-e^{-\alpha t})^{k-i} =
    \sum_{i=0}^\infty x_i e^{-\alpha i t} \sum_{k=0}^\infty
    \frac{\theta^{k}}{k!} (1-e^{-\alpha t})^{k} \\ & = \exp\Big(
    \theta(1-e^{-\alpha t})\Big) \cdot g_{\alpha t}(x),
  \end{align*}
  normalizing \eqref{eq:sol1} gives \eqref{eq:sol0}. 
\end{proof}
 
\begin{corollary}\label{cor2.4}
  Let $(S_t)_{t\geq 0}$ be the semigroup generated by $G^{(1)}$. Started in $x \in \mathbb S$, the solution of \eqref{eq:dynsys1} satisfies, for $\xi \in \mathbb R_+$, 
\begin{equation}
\label{eq:sol2}
\begin{aligned}
  S_t g_{\xi}(x) & = g_{\xi}(x(t)) = \frac{g_{\alpha t + \xi}(x)}{g_{\alpha t}(x)}
  \exp\Big(-\theta(1-e^{- \xi})(1-e^{-\alpha t})\Big)
\end{aligned}
\end{equation}
as well as, for $\xi, \eta \in \mathbb R_+$,
\begin{align}
    \label{eq:sol3}
    G^{(1)}S_t g_{\xi} & = \exp\Big(-\theta(1-e^{- \xi})(1-e^{-\alpha t})\Big)\Big(\lambda e^{-\alpha t}(1 - e^{-\xi})\frac{g_{\alpha  t + \xi}}{g_{\alpha t}} + \frac{\partial}{\partial t} \frac{g_{\alpha  t + \xi}}{g_{\alpha t}}\Big), \\ \label{eq:sol32}
    G^{(1)}S_t g_{\xi} g_{\eta} & = (S_t g_{\xi})\, G^{(1)}S_t g_{\eta} + (S_t g_{ \eta})\, G^{(1)}S_t g_{ \xi}, \\ 
    \label{eq:sol4}
    \nu \cdot G^{(2)}S_t g_{ \xi} & = 
    \frac{g_{\alpha  t + \xi}(x)g_{2\alpha t} - g_{\alpha t} g_{2\alpha t+\xi}}{g_{\alpha t}^3}\exp\Big(-\theta(1-e^{-\xi})(1-e^{-\alpha t})\Big), \\
    \nu \cdot G^{(2)}S_t g_{\xi} g_{\eta}  & = \exp\Big(-\theta(2-e^{-\xi} - e^{- \eta})(1-e^{-\alpha t})\Big) \notag \\ 
    \label{eq:sol42} &  \cdot 
    \frac{
    3g_{\alpha t + \xi} g_{\alpha  t + \eta}g_{2\alpha t} - 2 g_{\alpha  t + \xi} g_{\alpha t}g_{2\alpha t + \eta} - 2 g_{\alpha t + \eta} g_{\alpha t} g_{2\alpha t+\xi} + g_{\alpha t}^2 g_{2\alpha t+\xi+\eta}}{g_{\alpha t}^4}. 
\end{align}
Moreover, for $f_{t,\xi,\eta} := \nu \cdot G^{(2)}S_t g_{\xi} g_{\eta}$, there are bounded functions $g_{s, t, \xi, \eta}, g^1_{s, t, \xi, \eta}, g^2_{s, t, \xi, \eta} \in \text{span} \{g_{\xi_1} \cdots g_{\xi_m}: m \in \mathbb N, \xi_1,...,\xi_m \in \mathbb R_+ \}$ and $a, a_1, a_2$ (all not depending on $\nu$) such that 
\begin{align}
    \label{eq:morebounds}
    S_s f_{t,\xi, \eta} = \frac{g_{s,t,\xi, \eta}}{g_{\alpha (s+t)}^{a}}, \qquad G^{(1)} S_s f_{t,\xi, \eta} = \frac{g^1_{s,t,\xi, \eta}}{g_{\alpha (s+t)}^{a_1}}, \qquad \nu \cdot G^{(2)} S_s f_{t,\xi, \eta} = \frac{g^2_{s,t,\xi, \eta}}{g_{\alpha (s+t)}^{a_2}}.
\end{align}
\end{corollary}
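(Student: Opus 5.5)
The natural route is to exploit that $S_t$ is the semigroup of a deterministic flow, $S_tf(x)=f(x(t))$, with $x(t)$ given explicitly by Theorem~\ref{T2}.

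\textbf{First-order formulas.} Formula \eqref{eq:sol2} then follows by inserting \eqref{eq:sol0} into $g_\xi$ and exchanging sums, exactly as in the normalization computation at the end of the proof of Theorem~\ref{T2}. Concretely,
\[
\sum_k e^{-\xi k}\sum_{i\le k} x_i e^{-\alpha i t}\frac{(\theta(1-e^{-\alpha t}))^{k-i}}{(k-i)!}
= g_{\alpha t+\xi}(x)\exp\big(\theta e^{-\xi}(1-e^{-\alpha t})\big).
\]
Dividing by the normalization $g_{\alpha t}(x)\exp(\theta(1-e^{-\alpha t}))$ gives \eqref{eq:sol2}.

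For \eqref{eq:sol3} I would use the semigroup identity $G^{(1)}S_tf=\frac{\partial}{\partial t}S_tf$. This holds because $S_{t+h}f(x)=S_tf(x(h))$, so the derivative in $t$ equals the derivative along the flow at $x$. Differentiating the product in \eqref{eq:sol2} with respect to $t$ gives the two stated terms: the exponential factor contributes $-\theta(1-e^{-\xi})\alpha e^{-\alpha t}=-\lambda e^{-\alpha t}(1-e^{-\xi})$. I will check this sign against the statement; since $G^{(1)}$ is a first-order operator, the identity is literal. Formula \eqref{eq:sol32} is immediate, since $S_t$ is composition with the flow and hence multiplicative, and $G^{(1)}$ is a derivation.

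\textbf{Second-order formulas.} For \eqref{eq:sol4} and \eqref{eq:sol42} I would write $S_tg_\xi$ as a function $F(g_{\alpha t},g_{\alpha t+\xi})$ of linear functionals $\langle x,\varphi\rangle$. I would then use the Fleming--Viot identity
\[
\nu G^{(2)}\big(\Phi(\langle x,\varphi_1\rangle,\dots)\big)=\tfrac12\sum_{i,j}\partial_i\partial_j\Phi\,\big(\langle x,\varphi_i\varphi_j\rangle-\langle x,\varphi_i\rangle\langle x,\varphi_j\rangle\big).
\]
Here $\varphi_i\varphi_j$ for exponentials $e^{-a k}e^{-bk}$ gives $g_{a+b}$. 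The $2\times2$ (respectively $3\times 3$ for products) Hessian of $u/v$ (respectively $uw/v^2$) in the variables $u=g_{\alpha t+\xi}$, $v=g_{\alpha t}$, $w=g_{\alpha t+\eta}$ is routine. Then I collect terms. For \eqref{eq:sol4}, $\partial_u^2=0$, $\partial_u\partial_v=-1/v^2$ and $\partial_v^2=2u/v^3$, so
\[
\tfrac12\big[-\tfrac2{v^2}(g_{2\alpha t+\xi}-uv)+\tfrac{2u}{v^3}(g_{2\alpha t}-v^2)\big]=\frac{ug_{2\alpha t}-vg_{2\alpha t+\xi}}{v^3},
\]
which matches. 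The product case is similar: the terms $-\langle\varphi_i\rangle\langle\varphi_j\rangle$ cancel by homogeneity of degree $0$, and the coefficients $3,-2,-2,1$ come from the Hessian of $uw/v^2$.

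\textbf{Structure claim \eqref{eq:morebounds}.} This is the main obstacle. The approach is induction on the structural form. Here $f_{t,\xi,\eta}$ is a polynomial in $g$'s divided by $g_{\alpha t}^4$. Applying $S_s$ means evaluating at $x(s)$, and by \eqref{eq:sol2} each $g_\zeta(x(s))=c\,g_{\alpha s+\zeta}(x)/g_{\alpha s}(x)$. Hence $g_{\alpha t}(x(s))$ becomes a constant times $g_{\alpha(s+t)}/g_{\alpha s}$, and the result is a ratio whose denominator is a power of $g_{\alpha(s+t)}$ times powers of $g_{\alpha s}$.

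The subtle point is that numerator and denominator are homogeneous of the same degree (degree $0$ overall). Multiplying through, the powers of $g_{\alpha s}$ therefore cancel, leaving only $g_{\alpha(s+t)}^a$ downstairs. For $G^{(1)}S_s$ I would use $\partial_s$, which maps $g_\zeta$ to $-\alpha\,\partial$ applied within the span, since $\partial_\zeta g_\zeta=-\sum ke^{-\zeta k}x_k$. This is bounded only if $\zeta>0$, which holds whenever $s+t>0$. For $G^{(2)}$ I would apply the Hessian formula again, which raises the power of the denominator. Boundedness of the numerators holds since each $g_\zeta\le1$ for $\zeta\ge0$, and $\sum k e^{-\zeta k}x_k\le C_\zeta$ for $\zeta>0$. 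I expect the bookkeeping needed to justify the cancellation of $g_{\alpha s}$ to be the delicate point.
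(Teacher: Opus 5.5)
Your route is the paper's: represent $S_tf(x)=f(x(t))$ and substitute \eqref{eq:sol0}, use $G^{(1)}S_t=\partial_tS_t$, use multiplicativity of $S_t$ together with the derivation property of $G^{(1)}$, compute $G^{(2)}$ by direct second derivatives, and obtain \eqref{eq:morebounds} from the substitution $g_\zeta(x(s))=c(s,\zeta)\,g_{\alpha s+\zeta}/g_{\alpha s}$. Writing the $G^{(2)}$ computation as a Hessian in the variables $\langle x,\varphi_i\rangle$, with Euler's relation removing the $-\langle x,\varphi_i\rangle\langle x,\varphi_j\rangle$ terms, is just tidier bookkeeping of the paper's product-rule calculation, and your Hessians of $u/v$ and $uw/v^2$ do reproduce \eqref{eq:sol4} and \eqref{eq:sol42}. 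In \eqref{eq:sol3} you should commit to your sign. The exponential factor contributes $-\lambda e^{-\alpha t}(1-e^{-\xi})$, so the printed $+$ is a typo. A quick check: at $x=\mathrm{Poi}(\theta)$ the map $t\mapsto S_tg_\xi(x)$ is constant, so $G^{(1)}S_tg_\xi(x)=0$, and this holds only with the minus sign.

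The one real slip is in the step you yourself flagged as delicate. $f_{t,\xi,\eta}$ is not of degree $0$. By \eqref{eq:sol42} it is a cubic in the $g$'s divided by $g_{\alpha t}^4$, so it has degree $-1$. This matches your own formula: $G^{(2)}$ lowers the degree by one, since $\partial_i\partial_j\Phi$ has degree $-2$ and $\langle x,\varphi_i\varphi_j\rangle$ has degree $1$.

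Consequently the powers of $g_{\alpha s}$ do not cancel; one factor $g_{\alpha s}$ survives in the numerator. This is exactly what the paper's sample computation shows: $S_s(g_\xi g_\eta g_\zeta/g_{\alpha t}^4)\propto g_{\alpha s}g_{\alpha s+\xi}g_{\alpha s+\eta}g_{\alpha s+\zeta}/g_{\alpha(s+t)}^4$. The denominator claim only needs degree $\le 0$, so that part survives. Moreover, $S_sf_{t,\xi,\eta}$ is then of degree $0$, so your Euler cancellation is legitimate when you compute $G^{(2)}S_sf_{t,\xi,\eta}$.

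The slip does change your boundedness condition for $G^{(1)}S_sf_{t,\xi,\eta}$. Differentiating the surviving $g_{\alpha s}$ gives $\alpha\,\partial_\zeta g_\zeta|_{\zeta=\alpha s}$, which is bounded only for $s>0$ (not merely $s+t>0$) and equals $-\alpha m_1$ at $s=0$. Indeed, since $G^{(1)}g_\zeta=\lambda(e^{-\zeta}-1)g_\zeta+\alpha m_1g_\zeta+\alpha\,\partial_\zeta g_\zeta$, Euler's relation gives $G^{(1)}f_{t,\xi,\eta}=-\alpha\,m_1f_{t,\xi,\eta}+(\text{terms free of }m_1)$. So the bounded-numerator form fails at $s=0$. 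The paper passes over this as well. It is harmless downstream, because Proposition~\ref{P:locMartProb} only uses bounds of the form $C\exp(c\,m_1(x))$, which absorb a factor $m_1$.

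Finally, $\partial_\zeta g_\zeta$ is not in the span of products of $g$'s. The numerators $g^1_{s,t,\xi,\eta}$ therefore have to be taken in the algebra generated by the $g_\zeta$ and $\partial_\zeta g_\zeta$, as in the class $\mathcal D$ of Proposition~\ref{P:locMartProb}, not ``within the span'' as you wrote.
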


\begin{remark}\label{rem:GSPoi}
    If $x = \text{Poi}(\theta)$, note that 
    \begin{align*}
        \frac{g_{\alpha s}(x)}{g_{\alpha t}(x)} & = \frac{\exp\Big(-\theta(1 - e^{-\alpha s}) \Big)}{\exp\Big(-\theta(1 - e^{-\alpha t}) \Big)} = \frac{e^{\theta e^{-\alpha s}}}{e^{\theta e^{-\alpha t}}}
    \end{align*}
    by \eqref{eq:hxiPoi}. We additionally calculate
    \begin{align*}
    &\frac{\exp\Big(-\theta(2-e^{- \xi}-e^{-\eta})(1-e^{-\alpha t})\Big)
    }{g_{\xi} g_{\eta}}
    = \frac{g_{\alpha t}^2}{g_{\alpha t +\xi}g_{\alpha t+\eta}}.
\end{align*}
    Therefore and with help of \eqref{eq:sol42}, we get
    \begin{equation}
    \begin{aligned}
    \label{eq:sol42Poi}
    \nu \cdot & \frac{G^{(2)}S_t g_{\xi} g_{\eta}(x)}{g_{\xi}(x) g_{ \eta}(x)} = \exp\Big(-\theta(2-e^{- \xi} - e^{- \eta})(1-e^{-\alpha t})\Big)  \\ 
   & \qquad \qquad \cdot %\frac 1\nu 
    \frac{
    3g_{\alpha  t + \xi} g_{\alpha t + \eta}g_{2\alpha t} - 2 g_{\alpha t + \xi} g_{\alpha t}g_{2\alpha t + \eta} - 2 g_{\alpha t + \eta} g_{\alpha t} g_{2\alpha t+\xi} + g_{\alpha t}^2 g_{2\alpha t+\xi+\eta}}{g_{\alpha t}^4 g_{ \xi}(x) g_{ \eta}(x)} \\
    &=  \frac{3g_{2\alpha t} - 2 g_{\alpha t}g_{2\alpha t + \eta} / g_{\alpha t + \eta} - 2 g_{\alpha t} g_{2\alpha t+\xi} / g_{\alpha t + \xi} + g_{\alpha t}^2 g_{2\alpha t+\xi+\eta}/ (g_{\alpha t + \xi} g_{\alpha t + \eta}) }{g_{\alpha t}^2(x)} \\ 
    & = \frac{1}{g_{\alpha t}(x)}\Big( 3 \exp\Big( -\theta e^{-\alpha t}(1 - e^{-\alpha t})\Big) \\ & \qquad - 2 \exp\Big( - \theta e^{-(\alpha t + \eta)}(1 - e^{-\alpha  t})\Big) - 2\exp\Big(  - \theta e^{-(\alpha t + \xi)}(1 - e^{-\alpha t}) \Big) \\ &  \qquad \qquad \qquad + \exp\Big( -\theta(e^{-(\alpha t + \xi)} + e^{-(\alpha t + \eta)} -e^{-\alpha t} - e^{-(2\alpha t + \xi + \eta)} ) \Big) \Big) \\ & = 3 \exp\Big(\theta(1 - e^{-\alpha t})^2 \Big) - 2 \exp\Big(\theta(1 - e^{-(\alpha t + \xi)})(1 - e^{-\alpha t}) \Big) \\ & \qquad - 2 \exp\Big(\theta(1 - e^{-(\alpha t + \eta)})(1 - e^{-\alpha t}) \Big) + \exp\Big( \theta(1 - e^{-(\alpha t + \xi)})(1 - e^{-(\alpha t + \eta)})\Big).
    \end{aligned}
    \end{equation}
    In particular, setting $\eta=0$,
    \begin{equation}
    \begin{aligned}
    \nu \cdot & \frac{G^{(2)}S_t g_{ \xi} }{g_{ \xi}(x)} = \exp\Big(\theta(1 - e^{-\alpha t})^2 \Big) -  \exp\Big(\theta(1 - e^{-(\alpha t + \xi)})(1 - e^{-\alpha t}) \Big).
    \end{aligned}
    \end{equation}
\end{remark}

\begin{remark}
    Note that for $x = \text{Poi}(\varphi)$, using \eqref{eq:hxiPoi}  and \eqref{eq:sol2},
    \begin{align*}
        S_tg_{ \xi}(x) & = \exp\Big(-\varphi(1 - e^{-(\alpha t + \xi)}) + \varphi(1 - e^{-\alpha t})) \Big) \exp\Big(-\theta(1-e^{-\xi})(1-e^{-\alpha t})\Big) \\ & = \exp\Big( - \theta(1 - e^{-\xi}) + ( \theta - \varphi)(1 - e^{-\xi})e^{-\alpha t}  \Big) \\ & = \exp\Big( - (\theta + (\varphi - \theta) e^{-\alpha t}) (1 - e^{-\xi})\Big).
    \end{align*}
    This shows that $x(t) = \text{Poi}((\theta + (\varphi - \theta) e^{-\alpha t}))$, i.e.\ the system stays Poisson, and the parameter evolves according to $\dot \varphi = \alpha (\theta - \varphi)$.
\end{remark}

\begin{proof}[Proof of Corollary~\ref{cor2.4}]
  For \eqref{eq:sol2}, we compute
  \begin{align*}
    g_{ \xi}(x(t)) & = \sum_{k=0}^\infty e^{- \xi k} x_k(t) \\ & = \frac{1}{g_{\alpha t}(x)} \! \sum_{i=0}^\infty x_i e^{-i(\alpha t+\xi)} \Big(\underbrace{ \sum_{k=i}^\infty  \frac{\theta^{k-i} e^{- (k-i)\xi}}{(k-i)!}
      (1-e^{-\alpha t})^{k-i}}_{= \exp\big( \theta(e^{- \xi} ( 1\! - e^{-\alpha t}) \big) }\Big)\Big)\exp\Big(\!-\theta(1 \! -e^{-\alpha t})\Big)\\ & =
    \frac{g_{\alpha t + \xi}(x)}{ g_{\alpha t}(x)}
    \exp\Big(-\theta(1-e^{- \xi})(1-e^{-\alpha t})\Big).
  \end{align*}
  Then,
  \begin{align*}
    G^{(1)}S_t g_{\xi}(x) & = \frac{\partial}{\partial t}S_t g_{\xi}(x) \\ & = \exp\Big(-\theta(1-e^{- \xi})(1-e^{-\alpha t})\Big)\Big(\lambda e^{-\alpha t}(1 - e^{- \xi})\frac{g_{\alpha  t + \xi}(x)}{g_{\alpha t}(x)} + \frac{\partial}{\partial t} \frac{g_{\alpha t + \xi}(x)}{g_{\alpha t}(x)}\Big),
\end{align*}
which shows \eqref{eq:sol3}. From this, \eqref{eq:sol32} follows since $S_t(g_{\xi} g_{\eta}) = (S_t g_{\xi})(S_tg_{\eta})$ and $G^{(1)}$ is first order. 

Next, for $G^{(2)}S_t g_{\xi}$, taking a look at the form of $S_tg_{\xi}(x)$, we need to compute
\begin{align*}
  \sum_{k,\ell=0}^\infty & 
  x_k(\delta_{k\ell} - x_\ell) \frac{\partial^2}{\partial x_k
    \partial x_\ell} \frac{g_{\alpha t + \xi}(x)}{g_{\alpha t}(x)} 
  \\
  & = \sum_{k,\ell=0}^\infty
  x_k(\delta_{k\ell} - x_\ell) \frac{\partial}{\partial x_k} \frac{g_{\alpha t}(x) e^{-(\alpha t + \xi)\ell} - g_{\alpha t + \xi}(x) e^{-\alpha t \ell}}{g_{\alpha t}(x)^2} 
  \\ & = \sum_{k,\ell=0}^\infty
  x_k(\delta_{k\ell} - x_\ell) \cdot \big(g_{\alpha t}(x) ( e^{-\alpha t k} e^{-(\alpha t + \xi)\ell} - e^{-(\alpha t + \xi) k} e^{-\alpha t \ell}) \\ & \qquad \qquad \qquad \qquad \qquad - 2 e^{-\alpha tk}(g_{\alpha t}(x) e^{-(\alpha t + \xi)\ell} - g_{\alpha t + \xi}(x) e^{-\alpha t \ell})\big) / g_{\alpha t}(x)^3
  \\ & =  
  -2\frac{g_{\alpha t}(x)g_{2\alpha t+\xi}(x) - g_{\alpha t +\xi}(x)g_{2\alpha t}(x)}{g_{\alpha t}(x)^3}.
\end{align*}
Therefore, 
\begin{align*}
    \nu G^{(2)} S_{t} g_{ \xi}(x) & = \frac{1}{2} \sum_{k,\ell = 0}^\infty x_k ( \delta_{kl} - x_\ell) \frac{\partial^2 S_{t} g_{ \xi} (x)}{\partial x_k \partial x_\ell} \\ & = 
    \frac{1}{2} \sum_{k,\ell = 0}^\infty x_k ( \delta_{kl} - x_\ell) \frac{\partial^2 }{\partial x_k \partial x_\ell} \frac{g_{\alpha t +\xi}(x)}{g_{\alpha t}(x)}
    \exp\Big(-\theta(1-e^{-\xi})(1-e^{-\alpha t})\Big) \\ & = 
    \frac{g_{\alpha t + \xi}(x)g_{\alpha 2t}(x) - g_{\alpha t}(x) g_{2\alpha t+\xi}(x)}{g_{\alpha t}(x)^3}\exp\Big(-\theta(1-e^{- \xi})(1-e^{-\alpha t})\Big).
\end{align*}
Let us turn to \eqref{eq:sol42}. We calculate, omitting the $x$-dependence,
\begin{align*}
  \sum_{k,\ell=0}^\infty & 
  x_k(\delta_{k\ell} - x_\ell) \Big(\frac{\partial}{\partial x_k}\frac{g_{\alpha  t + \xi}}{g_{\alpha t}} \Big) \Big(\frac{\partial}{\partial x_\ell} \frac{g_{\alpha t + \eta}}{g_{\alpha t}}\Big)
  \\
  & = \sum_{k,\ell=0}^\infty
  x_k(\delta_{k\ell} - x_\ell) \frac{(g_{\alpha t} e^{-(\alpha t + \xi)k} - g_{\alpha t + \xi} e^{-\alpha t k})(g_{\alpha t} e^{-(\alpha t + \eta)\ell} - g_{\alpha t + \eta} e^{-\alpha t \ell})}{g_{\alpha t}^4} 
  \\ & = \Big(g_{\alpha t}^2 g_{2\alpha t+\xi+\eta} - g_{\alpha t} g_{2\alpha t+\xi} g_{\alpha t + \eta} - g_{\alpha t} g_{2\alpha t+\eta}g_{\alpha t + \xi} + g_{\alpha t +\xi}g_{\alpha t + \eta}g_{2\alpha t})\Big) / g_{\alpha t}^4.
\end{align*}
Therefore, since $S_t g_{ \xi} g_{ \eta} = (S_t g_{ \xi})(S_t g_{ \eta})$, \eqref{eq:sol42} follows from
\begin{align*}
    \exp\Big( & \theta(2 - e^{- \xi} - e^{- \eta})(1 - e^{-\alpha t}) \Big) \nu G^{(2)}S_t g_{ \xi} g_{ \eta}  \\ & = \nu\frac{g_{\alpha t + \xi}}{g_{\alpha t}} G^{(2)} \frac{g_{\alpha t + \eta}}{g_{\alpha t}} + \nu\frac{g_{\alpha t + \eta}}{g_{\alpha t}} G^{(2)} \frac{g_{\alpha t + \xi}}{g_{\alpha t}} \\ & \qquad \qquad \qquad \qquad + \sum_{k,\ell=0}^\infty 
    x_k(\delta_{k\ell} - x_\ell) \Big(\frac{\partial}{\partial x_k}\frac{g_{\alpha t + \xi}}{g_{\alpha t}} \Big) \Big(\frac{\partial}{\partial x_\ell} \frac{g_{\alpha t + \eta}}{g_{\alpha t}}\Big)
    \\ & = 
    \Big(g_{\alpha t + \xi}(g_{\alpha t + \eta}g_{\alpha 2t} - g_{\alpha t}g_{2\alpha t + \eta}) + g_{\alpha t + \eta} (g_{\alpha t + \xi}g_{2\alpha t} - g_{\alpha t}g_{2\alpha t+\xi}) \\ & \qquad + g_{\alpha t}^2 g_{2\alpha t+\xi+\eta} - g_{\alpha t} g_{2\alpha t+\xi}g_{\alpha t + \eta} - g_{\alpha t} g_{2\alpha t+\eta}g_{\alpha t + \xi} + g_{\alpha t +\xi}g_{\alpha t + \eta}g_{2\alpha t}\Big)/g_{\alpha t}^4 \\ & = \Big(
    2g_{\alpha t + \xi}(g_{\alpha t + \eta}g_{2\alpha t} - g_{\alpha t}g_{2\alpha t + \eta}) + 2g_{\alpha t + \eta}(g_{\alpha t + \xi}g_{2\alpha t} - g_{\alpha t}g_{2\alpha t+\xi}) \\ & \qquad \qquad \qquad \qquad \qquad \qquad \qquad \qquad + g_{\alpha t}^2 g_{2\alpha t+\xi+\eta} - g_{\alpha t + \xi}g_{\alpha t + \eta}g_{2\alpha t}\Big) / g_{\alpha t}^4.
\end{align*}
Let us prove \eqref{eq:morebounds}. Define
$$
c(s,\xi):=
\exp\Big(
-\theta(1-e^{-\xi})(1-e^{-\alpha s})
\Big).
$$
Then, it holds
$$
S_s g_{\xi}  =c(s,\xi)\frac{g_{\alpha s+\xi}  }{g_{\alpha s}  },
$$
which leads to
\begin{align*}
S_s\left(
\frac{
g_{\xi}g_{\eta}g_{\zeta}
}{g_{\alpha t}^4}
\right)  
&=
\frac{
S_s g_{\xi}  S_s g_{\eta}  S_s g_{\zeta}  
}{
(S_s g_{\alpha t}  )^4
} \\
&=
\frac{
c(s,\xi)c(s,\eta)c(s,\zeta)
}{
c(s,\alpha t)^4
}
\frac{
\frac{g_{\alpha s + \xi}  }{g_{\alpha s}  }
\frac{g_{\alpha s + \eta}  }{g_{\alpha s}  }
\frac{g_{\alpha s + \zeta}  }{g_{\alpha s}  }
}{
\left(\frac{g_{\alpha (s+t)}  }{g_{\alpha s}  }\right)^4
} \\
&=
\frac{
c(s,\xi)c(s,\eta)c(s,\zeta)
}{
c(s,\alpha t)^4
}
\frac{
g_{\alpha s}  g_{\alpha s + \xi}  g_{\alpha s + \eta}  g_{\alpha s + \zeta}  
}{
g_{\alpha (s+t)}  ^4
}.
\end{align*}

The rest of the proof of \eqref{eq:morebounds} follows along the same lines.
\end{proof}

\section{Some properties of the process $X$}
\label{S:propX}

\subsection{Moment bounds}
\label{ss:mombounds}
In this section, we establish in Proposition~\ref{P:bound} some bound needed for the application of Proposition~\ref{T:434appl}. More precisely, note that by Jensen's inequality, $e^{- \xi m_1(x)} \leq g_{\xi}(x)$, and hence
\begin{align}
    \label{eq:jens}
    \frac{1}{g_{ \xi}(x)^a} \leq e^{a \xi m_1(x)}, \qquad x \in \mathbb S, a \geq 0.
\end{align} In the proof of the main theorem, we need to show an integrability condition for $g_\xi(X(t))^{-a}$; for this we will rely on~\eqref{eq:jens}. 
Therefore, we need to bound the tails of $\sup_{0 \leq t \leq \tau} m_1(X(t))$.
For this, we recall Lemma 3.2 from \cite{audiffren2013muller}, which we give here in our notation for completeness.

\begin{lemma}\label{l:linear}
   Let \eqref{eq:ratchetSDE} have a unique weak solution and $\tau\geq 0$. For all $c>0$, 
  $$\mathbb P\left(\sup_{0 \leq t \leq \tau} m_1(X(t)) - m_1(X_0) \geq \lambda \tau + c\right) \leq \exp(-2\alpha \nu c).$$
\end{lemma}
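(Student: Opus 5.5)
The plan is to derive the SDE for $m_1(X)$ from \eqref{eq:ratchetSDE} and show that $m_1$ is dominated by a Brownian motion with drift $\lambda$ plus a negative correction that exactly beats the quadratic variation. Summing $k\,dX_k$ gives
\begin{align*}
dm_1 = (\lambda - \alpha \bar m_2)\,dt + dM, \qquad d\langle M\rangle = \tfrac 1\nu \bar m_2\,dt,
\end{align*}
where $M$ is a continuous local martingale. The drift from selection is $\alpha\sum_k k\sum_j (j-k)x_jx_k = \alpha(m_1^2 - \sum_k k^2x_k) = -\alpha \bar m_2$. For the quadratic variation, $\sum_{k,\ell} k\ell\, x_k(\delta_{k\ell}-x_\ell)/\nu = \bar m_2/\nu$. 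This is exactly \eqref{eq:dm1} quoted in Remark~\ref{rem:sanity}. To justify the termwise summation I would localize by stopping times $T_n$ at which $\sum_k e^{\chi k}X_k$ exceeds $n$, which is possible since paths lie in $\mathcal C_{\mathbb S_\chi}$ by Proposition~\ref{martprob}. Alternatively one can apply the martingale problem to the test functions $\langle x, k\wedge n\rangle$ and let $n\to\infty$.

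Next I would use an exponential supermartingale with parameter $\gamma := 2\alpha\nu$. Set $Y(t) := m_1(X(t)) - m_1(X(0)) - \lambda t$. Then $Y = M - \alpha\int_0^\cdot \bar m_2\,ds$, and
\begin{align*}
Z(t) := \exp\Big(\gamma M(t) - \tfrac{\gamma^2}{2}\langle M\rangle(t)\Big) = \exp\Big(\gamma M(t) - 2\alpha^2\nu \int_0^t \bar m_2\,ds\Big) = \exp(\gamma Y(t)),
\end{align*}
because $\gamma\alpha = 2\alpha^2\nu$. As a positive local martingale, $Z$ is a supermartingale with $\mathbb E Z(0)=1$. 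This choice of $\gamma$ is precisely what makes the selection drift cancel the Itô correction, so the variance term $\bar m_2$ disappears and no control of $\bar m_2$ is needed.

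Finally I would apply Doob's maximal inequality for nonnegative supermartingales:
\begin{align*}
\mathbb P\Big(\sup_{t\le\tau} Y(t)\ge c\Big) = \mathbb P\Big(\sup_{t\le \tau} Z(t)\ge e^{\gamma c}\Big)\le e^{-\gamma c} = e^{-2\alpha\nu c}.
\end{align*}
Since $\sup_t (m_1(X(t)) - m_1(X_0)) - \lambda\tau \le \sup_t Y(t)$, the claim follows.

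The main obstacle is the first step. One has to establish rigorously that $m_1(X)$ is a continuous semimartingale with that decomposition when $m_1$ is an infinite sum, and that $M$ is a genuine continuous local martingale. I would handle this through the stopping-time localization above, together with the fact that, under the moment assumptions of \cite{audiffren2013muller}, $\sum_k k^2X_k$ stays finite along paths. The supermartingale property itself needs only the local martingale property, via Fatou's lemma.
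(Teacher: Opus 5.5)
Your proposal is correct and takes essentially the same route as the paper. You derive $dm_1 = (\lambda-\alpha\bar m_2)\,dt + dM$ with $d\langle M\rangle = \tfrac1\nu\bar m_2\,dt$, note that the choice $2\alpha\nu$ turns $\exp\big(2\alpha\nu(m_1(X(t))-m_1(X_0)-\lambda t)\big)$ into the exponential local martingale of $2\alpha\nu M$ and hence a supermartingale, and finish with Doob's maximal inequality. Your localization remark justifying the termwise computation of $dm_1$ is extra rigor that the paper leaves implicit.
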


\begin{proof}
Using It\^o's Lemma in \eqref{eq:ratchetSDE}, we see that
\begin{equation}
\label{eq:dm1}
\begin{aligned}
   dm_1(x) & = \sum_{k=0}^\infty k dX_k \\ & = \alpha \sum_{j,k=0}^\infty k(j-k) X_jX_k + \lambda  \sum_{k=0}^\infty k( X_{k-1} -  X_k)\Big)dt + \sum_{k,\ell \atop k \neq \ell} k \sqrt{\frac 1\nu X_k X_\ell} dW_{k\ell} \\ & = (\lambda -\alpha \bar m_2(X)) dt + \sqrt{\frac 1\nu \bar m_2(X)}dW
\end{aligned}    
\end{equation}
for some Brownian motion $W$, since 
\begin{align*}
    \Big\langle \sum_{k, \ell \atop k \neq \ell} k \! \sqrt{\frac 1\nu X_k X_\ell} dW_{k\ell}, \sum_{k', \ell' \atop k' \neq \ell'} k' \sqrt{\frac 1\nu X_{k'} X_{\ell'}} dW_{k'\ell'}  \Big\rangle \!=\! \frac 1\nu \sum_{k,\ell \atop k \neq \ell} k^2 X_k X_\ell - k\ell X_k X_\ell \!=\! \frac 1\nu \bar m_2(X).
\end{align*}
So, we find 
\begin{align*}
m_1(X(t)) - m_1(X_0) = \int_0^t dm_1(X(s)) = \lambda t \underbrace{- \alpha \int_0^t \bar m_2(X(s))ds + \int_{0}^{t} \sqrt{\frac{\bar m_2(X(s))}{\nu}}\, dW_s}_{=:Z_t}.
\end{align*}
Moreover, (see e.g.~\cite{Kallenberg20021},a 19.22, Exercise 2 in Chapter 18)
\[
\big(\exp\bigl(2\alpha \nu\, Z_t\bigr)\big)_{t\geq 0} = 
\Big(\exp\Bigl( \int_0^{t} 2\alpha \sqrt{\nu \bar m_2(X(s))} dW_s - \int_0^{t} 2\alpha^2 \nu \bar m_2(X(s)) ds\Big)\Big)_{t \geq 0}
\]
is both a local martingale and a supermartingale. 
In particular, using Doob's inequality, for any $c>0$,
\begin{align*}
\mathbb P\Big( & \sup_{0 \le t \le \tau} (m_1(X(t)) - m_1(X_0)) > \lambda \tau + c\Big) = \mathbb P\Big( \sup_{0 \le t \le \tau} \lambda t + Z_t > \lambda \tau + c\Big) \\ & \leq \mathbb P\Big( \sup_{0 \le t \le \tau} Z_t > c\Big) = \mathbb P\!\left( \sup_{0 \le t \le \tau} \exp\Big( 2\alpha \nu Z_t \Big) \ge e^{2\alpha \nu c} \right) \leq e^{-2\alpha \nu c}.
\end{align*}
\end{proof}

\begin{proposition}
\label{P:bound}
Let $m_1(X_0) = \zeta$ and $\eta < 2\alpha \nu$. Then, 
$$ \mathbb E\Big( \sup_{0 \leq t\leq \tau} \exp\Big(\eta  m_1(X(t))\Big) \Big) < \infty$$
\end{proposition}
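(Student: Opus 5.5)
The plan is to deduce the bound from the exponential tail estimate of Lemma~\ref{l:linear} by the layer-cake formula. Set $Y:=\sup_{0\le t\le\tau}\big(m_1(X(t))-m_1(X_0)\big)$. Since $m_1(X_0)=\zeta$ is deterministic,
$$\sup_{0\le t\le\tau}\exp\big(\eta\, m_1(X(t))\big)\le e^{\eta\zeta}\,e^{\eta Y_+}\qquad\text{for }\eta\ge0 .$$
So it suffices to show $\mathbb E(e^{\eta Y_+})<\infty$. If $\eta\le 0$, then $m_1\ge 0$ already bounds the integrand by $1$, and there is nothing to prove. We therefore assume $0<\eta<2\alpha\nu$.

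For the main step, write $\mathbb E(e^{\eta Y_+}) = 1+\int_0^\infty \eta e^{\eta y}\,\mathbb P(Y>y)\,dy$. Split the integral at $y=\lambda\tau$. On $[0,\lambda\tau]$, bound the probability by $1$; this gives the contribution $e^{\eta\lambda\tau}-1$. For $y=\lambda\tau+c$ with $c>0$, Lemma~\ref{l:linear} gives $\mathbb P(Y>\lambda\tau+c)\le e^{-2\alpha\nu c}$. The remaining part is then bounded by
$$\eta e^{\eta\lambda\tau}\int_0^\infty e^{(\eta-2\alpha\nu)c}\,dc=\frac{\eta\, e^{\eta\lambda\tau}}{2\alpha\nu-\eta},$$
which is finite exactly because $\eta<2\alpha\nu$. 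This is the only place the hypothesis enters, and it shows the threshold is the natural one coming from the exponential supermartingale $\exp(2\alpha\nu Z_t)$ in the proof of the lemma.

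The main obstacle is checking that Lemma~\ref{l:linear} really applies in our setting. Two points need care.

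\emph{Finiteness of $m_1$.} The lemma uses It\^o's formula for $m_1$ via \eqref{eq:dm1}, which requires $m_1(X(t))$ to be finite and the semimartingale decomposition to be valid. Here we rely on Proposition~\ref{martprob}. Starting in $\mathbb S_\chi$, the solution has continuous paths in $\mathbb S_\chi$, so $m_1(X(t))<\infty$ for all $t$. We would also localize with stopping times $T_n:=\inf\{t: m_1(X(t))\ge n\}$, apply the lemma's argument up to $\tau\wedge T_n$, and let $n\to\infty$ using monotone convergence.

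\emph{Measurability of the supremum.} Continuity of paths makes $Y$ measurable, since the supremum can be taken over rational times. The application of Doob's inequality to the nonnegative supermartingale $\exp(2\alpha\nu Z_t)$ (with initial value $1$) is then as in the lemma.

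With these points checked, combining the two pieces gives
$$\mathbb E\Big(\sup_{0\le t\le\tau}e^{\eta m_1(X(t))}\Big)\le e^{\eta\zeta}\Big(e^{\eta\lambda\tau}+\frac{\eta e^{\eta\lambda\tau}}{2\alpha\nu-\eta}\Big)<\infty .$$
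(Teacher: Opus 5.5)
Your argument is correct and is essentially the paper's proof: both combine the layer-cake formula with the tail bound of Lemma~\ref{l:linear}. Your split of the $y$-integral at $y=\lambda\tau$ is exactly the paper's split at $x=1$ under the substitution $x=e^{\eta(y-\lambda\tau)}$, and the hypothesis $\eta<2\alpha\nu$ enters at the same point (integrability of $e^{(\eta-2\alpha\nu)c}$ on $[0,\infty)$, respectively of $x^{-2\alpha\nu/\eta}$ on $[1,\infty)$). Your remarks on finiteness of $m_1$ along paths and on localization are not in the paper's proof, but they are harmless and make the appeal to the lemma more careful.
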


\begin{proof}
    There is nothing to show for $\eta \leq 0$, so we assume that $0 < \eta < 2\alpha \nu$. We write, using Lemma~\ref{l:linear},
\begin{align*}
    \mathbb E\Big(&e^{- \eta(\zeta + \lambda\tau)} \sup_{0 \leq t\leq \tau} \exp\Big(\eta  m_1(X(t))\Big) \Big) \\ & = \int_0^\infty \mathbb P\Big(e^{- \eta(\zeta + \lambda\tau)}\exp\Big(\eta  \sup_{0 \leq t\leq \tau} m_1(X(t))\Big) \geq x \Big) dx \\ 
     & = \int_1^\infty \mathbb P\Big(e^{- \eta(\zeta + \lambda\tau)}\exp\Big(\eta  \sup_{0 \leq t\leq \tau} m_1(X(t))\Big) \geq x \Big) dx \\ 
     &\quad \quad +\int_0^1 \mathbb P\Big(e^{- \eta(\zeta + \lambda\tau)}\exp\Big(\eta  \sup_{0 \leq t\leq \tau} m_1(X(t))\Big) \geq x \Big) dx\\
     & \leq 1 + \int_1^\infty \mathbb P\big( \sup_{0 \leq t\leq \tau} m_1(X(t)) \geq \zeta + \lambda \tau + \tfrac{\log x}{\eta} \big) dx \\ 
     & \leq 1 + \int_1^\infty \exp\big( - 2\alpha \nu \tfrac{\log x}{\eta} \big)dx\\
     &= 1+ \int_1^\infty x^{-2\alpha \nu / \eta} dx < \infty. 
\end{align*}
\end{proof}

\subsection{A local martingale problem}
\label{S:locMartProb}
We will establish that $X$ from Proposition~\ref{martprob} not only solves that $(\mathbb S_{\chi}, \delta_x, G, \mathcal F)$ martingale problem, but also a local martingale problem for some $\mathcal D$ containing unbounded functions. 

\begin{proposition}
    \label{P:locMartProb}
    Recall all notions from Definition~\ref{def:F}, and $S$, the semigroup generated by $G^{(1)}$, from Corollary~\ref{cor2.4}. For $x \in \mathbb S_{\chi}$, let $X$ be the solution of the $(\mathbb S_{\chi}, \delta_x, G, \mathcal F)$ martingale problem from Proposition~\ref{martprob}. In addition, let 
    \begin{align}\label{def:D}
    \mathcal D & := \text{algebra generated by } \bigcup_{{\zeta} \geq 0} \bigcup_{m=0}^\infty \mathcal D^m_{\zeta}, \\ 
    \mathcal D^m_{\zeta} & := \left\{ f_{\xi{, \zeta}} := \frac{h_{ \xi_1} \cdots h_{ \xi_m}}{g_{\alpha {\zeta}}^m}
    : h_{\xi_i} = g_{\xi_i} \text{ or }h_{\xi_i} = \frac{\partial}{\partial \xi_i} g_{\xi_i},  
    \xi \in \mathbb R_+^m
    \right\} \subseteq \mathcal C^2(\mathbb S)
    \end{align}
    and define $G = G^{(1)} + G^{(2)}$ on $\mathcal D$ as in \eqref{eq:G1} and \eqref{eq:G2}. Then, $X$ solves the local $(\mathbb S_{\chi}, \delta_x, G, \mathcal D)$ martingale problem. Moreover, for $\xi, \eta, \tau \geq 0$, 
    \begin{enumerate}
        \item It holds
        $$ g_{ \xi} g_{ \eta} \in \mathcal D_0^2, \qquad S_t (g_{ \xi} g_{ \eta}) \in \mathcal D_t^2, \qquad G^{(1)}S_t(g_{ \xi} g_{ \eta})\in\mathcal D_t^4, \qquad G^{(2)}S_t(g_{ \xi} g_{ \eta})\in\mathcal D_t^4.$$
    \item There is $C>0$ (not depending on $\nu$) such that for all $t \in [0,\tau]$
    $$ |S_t (g_{ \xi} g_{ \eta})(x)| + |G^{(1)} S_t (g_{ \xi} g_{ \eta})(x)| + \nu |G^{(2)} S_t (g_{ \xi} g_{ \eta})(x)| \leq C \exp\Big( 4 \alpha \tau m_1(x)\Big). $$
    \item It holds
    \begin{align*}
    & f_{t, \xi, \eta} := \nu G^{(2)}S_t (g_{ \xi} g_{ \eta}) \in \mathcal D_t^4, \qquad S_s f_{t, \xi, \eta} \in \mathcal D_{s+t}^a, \\ & G^{(1)} S_s f_{t, \xi, \eta} \in \mathcal D_{s+t}^{a_1}, \qquad G^{(2)} S_s f_{t, \xi, \eta} \in \mathcal D_{s+t}^{a_2}   
    \end{align*}
    with $a, a_1, a_2$ from Corollary~\ref{cor2.4}.
    \item There is $C>0$ (not depending on $\nu$) such that for all $s, t \in [0,\tau]$ with $s+t \in [0, \tau]$
    $$ |S_s f_{t, \xi, \eta}(x)| + |G^{(1)} S_s f_{t, \xi, \eta}(x)| + \nu |G^{(2)} S_s f_{t, \xi, \eta}(x)| \!\leq\! C \exp\Big( \max(a, a_1, a_2) \alpha \tau m_1(x)\Big). $$
\end{enumerate}
\end{proposition}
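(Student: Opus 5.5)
For the local martingale problem on $\mathcal D$, I would argue by localization and It\^o's formula. The solution $X$ of Proposition~\ref{martprob} is the weak solution of \eqref{eq:ratchetSDE} with continuous paths in $\mathbb S_\chi$. For $f\in\mathcal D$, $f$ is a polynomial in finitely many functions $g_{\xi_i}$, $\partial_{\xi_i}g_{\xi_i}=-\sum_k k e^{-\xi_i k}x_k$ and $1/g_{\alpha\zeta}$. Each of these is a linear functional $\langle x,\varphi\rangle$ with $|\varphi(k)|\le C(1+k)$, except $1/g_{\alpha\zeta}$, which is smooth as long as $g_{\alpha\zeta}$ is bounded away from $0$.

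Define the stopping times
$$T_n:=\inf\{t: m_1(X(t))\ge n\}.$$
On $[0,T_n]$, Jensen's inequality \eqref{eq:jens} gives $g_{\alpha\zeta}(X(t))\ge e^{-\alpha\zeta n}$. Moreover, the first and second partial derivatives of $f$ in the coordinates $x_k$ grow at most polynomially in $k$ with $n$-dependent constants.

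I would then apply It\^o's formula to $f(X(t\wedge T_n))$, using the SDE \eqref{eq:ratchetSDE}. To justify this in infinitely many coordinates, I would first approximate $f$ by a smooth function of the finitely many linear functionals involved, whose semimartingale dynamics follow from Proposition~\ref{martprob} with $\varphi$ of at most exponential growth, and then apply finite-dimensional It\^o. The drift is exactly $G^{(1)}f+G^{(2)}f$, since the quadratic covariations of the $X_k$ are $\frac1\nu X_k(\delta_{k\ell}-X_\ell)\,dt$. The stochastic integral stopped at $T_n$ has bounded integrand, hence is a true martingale. Finally, $T_n\to\infty$ because $m_1(X)$ is continuous and finite on $\mathbb S_\chi$, and the algebra generated by such $f$ is handled the same way.

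For items 1 and 3 I would just read off the formulas of Corollary~\ref{cor2.4}:
\begin{itemize}
\item $S_t(g_\xi g_\eta)=c\, g_{\alpha t+\xi}g_{\alpha t+\eta}/g_{\alpha t}^2\in\mathcal D^2_t$.
\item \eqref{eq:sol42} exhibits $G^{(2)}S_t(g_\xi g_\eta)$ as a combination of triple products over $g_{\alpha t}^4$; multiplying numerator and denominator by a suitable $g_{\alpha t}$ factor puts each term in $\mathcal D^4_t$.
\item \eqref{eq:sol3} and \eqref{eq:sol32} give $G^{(1)}S_t(g_\xi g_\eta)$. The time derivative $\partial_t(g_{\alpha t+\xi}/g_{\alpha t})$ produces $\partial_\xi$-derivatives of $g$, which is precisely why $h=\partial_\xi g$ is allowed in $\mathcal D$; padding with factors of $g_{\alpha t}$ again yields $\mathcal D^4_t$.
\item Item 3 is \eqref{eq:morebounds}, together with the same padding and derivative remark.
\end{itemize}

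For the bounds in items 2 and 4, I would note that $|g_{\xi}|\le1$ and $|\partial_\xi g_\xi|=\sum_k ke^{-\xi k}x_k$. The latter is bounded by $\sup_k k e^{-\xi k}$ when $\xi>0$, and when $\xi=0$ it equals $m_1(x)\le C_\epsilon e^{\epsilon m_1(x)}$. So every numerator is at most $C e^{\epsilon m_1(x)}$, while the denominator satisfies $g_{\alpha t}^{-m}\le e^{m\alpha t m_1(x)}\le e^{m\alpha\tau m_1(x)}$ by \eqref{eq:jens}. The constants come from the explicit coefficients, such as $\lambda e^{-\alpha t}$ and the exponential prefactors, which are bounded uniformly in $t\le\tau$ and independent of $\nu$; for the $G^{(2)}$ terms this uses the factor $\nu$ in front.

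The main obstacle is the $\epsilon$-loss from the $\partial_\xi g$ factors, since it is unclear whether the exponent $m\alpha\tau$ is attained exactly. To get the exact exponent, I would instead absorb them via $\sum_k k e^{-\alpha t k}x_k\le C_t\, g_{\alpha t/2}$ for $t>0$, or simply bound $m_1\le e^{\alpha\tau m_1}$ up to constants. That may force adjusting the exponent slightly, which is harmless for Proposition~\ref{P:bound} as long as the resulting exponent is less than $2\alpha\nu$, true for large $\nu$. The other delicate point is rigorously justifying It\^o on infinitely many coordinates, which I would handle via the linear-functional reduction above.
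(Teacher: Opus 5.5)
Your proposal is correct and follows the paper's proof. Like the paper, you apply It\^o's formula to $\Phi(h_{\xi_1},\dots,h_{\xi_m};g_{\alpha\zeta})$ along the continuous semimartingales $h(X(t))$ (your localization by $T_n$ makes explicit what the paper leaves implicit), read off items 1 and 3 from Corollary~\ref{cor2.4}, and get items 2 and 4 from \eqref{eq:jens}. Your observation that the factors $\partial_\xi g_\xi$ are only bounded by $m_1(x)$ is more careful than the paper's claimed bound $C/g_{\alpha t}(x)^4$. For item 2 it costs nothing, since those terms carry only $g_{\alpha t}^{-3}$ and $(1+m_1)e^{3\alpha t m_1}\le C e^{4\alpha\tau m_1}$ for $\tau>0$, so no exponent adjustment is needed.

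One bookkeeping slip: the numerators in \eqref{eq:sol42} are triple products over $g_{\alpha t}^4$, so padding numerator and denominator with $g_{\alpha t}$ would land in $\mathcal D^5_t$. Pad with $g_0=1$ instead, as the paper does.
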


\begin{proof}
Let $f := f_{\xi, {\zeta}}$ for some $\xi \in \mathbb R_+^m$ and ${\zeta \geq 0}$. For $\Phi \in \mathcal C^2(\mathbb R^{m} \times \mathbb R_{>0})$, given by $\Phi(x,y) := \frac{x_1\cdots x_m}{y^m}$, write $f = \Phi(h_{ \xi_1},...,h_{ \xi_m}; g_{\alpha {\zeta}})$ with $h_{\xi_i} = {g}_{\xi_i}$ or $h_{\xi_i} = \frac{\partial}{\partial {\xi_i}} {g}_{\xi_i}$. 

{
Let $\eta \in \{\xi_1,...,\xi_m, \alpha t\}$. Since $X$ solves the $(\mathbb S_\chi, \delta_x, G, \mathcal F)$ local martingale problem, has continuous paths, and $h_{\eta} \in \mathcal F$ is bounded, the process $(h_{\eta}(X_t))_{t\geq 0}$, is a continuous semi-martingale; see Proposition~\ref{martprob}.} Hence, by It\^o's Lemma, $(f(X(t)) - \int_0^t Gf(X(s)))_{t\geq 0}$ is a continuous local martingale. Since $f$ was fixed, we have shown that $X$ solves the $(\mathbb S_{\chi}, \delta_x, G, \mathcal D)$ local martingale problem. 

\noindent 1. Note that $g_0 = 1$, implying the first assertion. For the second, note that $S_t (g_{ \xi} g_{ \eta}) = (S_t g_{ \xi})(S_t g_{ \eta})$. The rest is a direct consequence of Corollary \ref{cor2.4}. 

\noindent 2. By 1., there is some $C$ (depending only on $\xi, \eta, \tau$) such that
$$ |S_t (g_{ \xi} g_{ \eta})(x)| + |G^{(1)} S_t (g_{ \xi} g_{ \eta})(x)| + \nu |G^{(2)} S_t (g_{ \xi} g_{ \eta})(x)| \leq \frac{C}{g_{\alpha t}(x)^4}.$$
Hence the result follows from \eqref{eq:jens}.

\noindent
3. As in 1., this is a consequence of Corollary~\ref{cor2.4}. 

\noindent
4. This follows along the same lines as 2.
\end{proof}

%\section{Proofs of Theorem~\ref{T1} and Corollary~\ref{cor}}

\section{Proofs of main results}

Recall that $G^{(1)}$ is the generator of the dynamical system (see \eqref{eq:G1}), and $S$ the corresponding semigroup, and $G^{(2)}$ is the resampling operator from \eqref{eq:G2}. Because of Proposition~\ref{P:locMartProb}, the weak solution $X$ of~\eqref{eq:ratchetSDE} solves the local martingale problem for the operator $G=G^{(1)} + G^{(2)}$ with $\mathcal D$ from \eqref{def:D}. 

\subsection{Proof of Theorem~\ref{T1}}
The proof proceeds in four steps; see also Figure~\ref{fig:proof-structure}. Throughout, we put $x:= {\rm Poi}(\theta)$ and fix $\xi, \eta \ge 0$ and $\tau > 0$.

\noindent\textbf{Step 1:} The process
$$
\left(
S_{\tau-t}(g_\xi g_\eta)(X(t))
-
\int_0^t
G^{(2)}S_{\tau-s}(g_\xi g_\eta)(X(s))\,ds
\right)_{0\leq t\leq\tau}
$$
is a martingale. In particular, since for our specific choice $x={\rm Poi}(\theta)$ we have 
\begin{equation}\label{fixedpoint}
S_\tau f (x) = f(x) \quad \mbox{ for all } f\in \mathcal D,
\end{equation}
this implies 
\begin{align}
  \label{eq:toshow2}
  \mathbb E_{x}\big((g_\xi g_\eta)(X(\tau))\big)
  &=
  (g_\xi g_\eta)(x)
  +
  \int_0^\tau
  \mathbb E_{x}\left(
  G^{(2)} S_{\tau-s}(g_\xi g_\eta)(X(s))
  \right)\,ds.
\end{align}

\emph{Proof:}
We apply Proposition~\ref{T:434appl} with $f=g_\xi g_\eta$. Therefore, we need to check the assumptions. Take $\mathcal D$ and $\mathcal D_t^m$ from  Proposition~\ref{P:locMartProb}, so we see that $X$ solves the local martingale problem for $\mathcal D$, and $f \in \mathcal D_0^2$, as well as $S_t f \in \mathcal D_t^2$, so (i) from Proposition~\ref{T:434appl} holds. Clearly, (ii) holds by  \eqref{eq:sol42}. For (iii), \eqref{eq:L1} is satisfied with $L(x) := C \exp\big(4\alpha \tau m_1(x)\big)$ with $C$ from Proposition~\ref{P:locMartProb}.2. Since $\nu \to \infty$ in Theorem~\ref{T1}, we have $2\tau < \nu$ wlog, so Proposition~\ref{P:bound} gives~\eqref{eq:L2}.

\noindent\textbf{Step 2:}  For $t \in [0,\tau]$ define
\begin{align}
    \label{eq:ftxieta}
    f_{t,\xi,\eta} := \nu \cdot G^{(2)}S_t (g_\xi g_\eta).
\end{align}
Then the process
$$ \Big(S_{t-s}f_{\tau-t,\xi,\eta}(X(s)) - \int_0^s G^{(2)} S_{t-r} f_{\tau-t,\xi,\eta}(X(r))dr\Big)_{0\leq s\leq t} $$
is a martingale. In particular, again because of~\eqref{fixedpoint}
\begin{align}\label{eq:G2G2}
    \mathbb E_x\left(G^{(2)}S_{\tau-t} (g_\xi g_\eta)(X(t))\right) \!-\! G^{(2)}S_{\tau-t} (g_\xi g_\eta)(x) \!=\! \frac 1 \nu \int_0^t \!\mathbb E_x\left(G^{(2)} S_{t-s} f_{\tau-t, \xi, \eta}(X(s))\right) ds. 
\end{align}
{\em Proof:} We apply the same procedure as in Step~1 with $g_\xi g_\eta \in \mathcal D_0^2$ replaced by $f_{\tau-t, \xi, \eta} \in \mathcal D_{\tau-t}^4$ (cf.\ Corollary~\ref{cor2.4}). For the applicability of Proposition~\ref{T:434appl}, we use Proposition~\ref{P:locMartProb}.3/4, leading to $L(x) := C \exp\big(\max(a, a_1, a_2) \alpha \tau m_1(x) \big)$ with $C, a, a_1, a_2$ from Proposition~\ref{P:locMartProb}.4 together with Proposition~\ref{P:bound}.

\noindent\textbf{Step 3:} 
\begin{align}
\label{eq:toshow4}
 \sup_{0 \leq t \leq\tau} \Big| \mathbb E_x\left(G^{(2)}S_{\tau-t} (g_\xi g_\eta)(X(t))\right) - G^{(2)}S_{\tau-t} (g_\xi g_\eta)(x) \Big| = \mathcal O \Big( \frac 1{\nu^2}\Big) \quad \mbox{ as } \nu \to \infty.
\end{align}

\noindent
{\em Proof:} We use \eqref{eq:G2G2} and Proposition~\ref{P:locMartProb}.4 in order find $C>0$ (depending on $\xi, \eta, \tau$, but not on $\nu$) such that the left hand side in \eqref{eq:toshow4} is bounded from above by  
\begin{align*}
    \frac 1 \nu \int_0^\tau \mathbb E_x \Big(\Big|G^{(2)} & S_{t-s} f_{\tau-t, \xi, \eta}(X(s)\Big|\Big)ds \Big) \\ & \leq \frac{C}{\nu^2} \mathbb E_x\Big(\sup_{0\leq t\leq \tau} \exp\big(\max(a, a_1, a_2) \alpha \tau m_1(X(t)) \big) \Big).
\end{align*}
Hence, the result follows from Proposition~\ref{P:bound}.

\noindent\textbf{Step 4:}  We now put Steps 1 and 3 together. For $x= {\rm Poi} (\theta)$ we have (cf.~\eqref{eq:hxiPoi} )
\begin{equation}\label{gxigeta}
    (g_\xi g_\eta)(x) = \exp(-\theta(2-e^{-\xi}-e^{-\eta})).
\end{equation}
Combined with ~\eqref{eq:toshow2} this implies the equality 
\begin{equation}\label{intermediate1}
\mathbb E_x((g_\xi g_\eta)(X(\tau))) =  \exp\Big( -\theta(2 - e^{-\xi} - e^{-\eta})\Big) + \int_0^\tau \mathbb E_x\left(G^{(2)} S_{\tau-t}(g_\xi g_\eta) (X(t))\right) dt
\end{equation}
From~\eqref{eq:toshow4} we obtain that the integral in~\eqref{intermediate1} equals
\begin{equation}\label{intermediate2}
\int_0^\tau G^{(2)}S_{\tau-t}(g_\xi g_\eta)(x)\, dt + \mathcal O\bigg(\frac 1{\nu^2}\bigg)
\end{equation}
For completing the proof of Theorem~\ref{T1} it remains to relate~\eqref{intermediate2} with the expression on the r.h.s. of the equality asserted in Theorem~\ref{T1}. This is achieved by multiplying~\eqref{eq:sol42Poi} by $g_{\xi}(x)g_{\eta}(x)$, expressing the resulting equality by $g_\xi$ and $g_\eta$ and again using~\eqref{gxigeta}. 

\subsection{Proof of Corollary~\ref{cor}}
Within the corollary, we will take derivatives of $g_\xi g_\eta$, and let $\eta\to\infty$ in places. We use 
\begin{align}\label{eq:mder}
    m_1(x) := \sum_{k=0}^\infty kx_k = - \frac{\partial g_\xi(x)}{\partial \xi}\Big|_{\xi = 0}.
\end{align}
and
\begin{align}\label{eq:mder2}
    x_k = \text{coefficient of } e^{- k \xi} \text{ in } g_\xi(x).
\end{align}
Moreover, 
\begin{align}
    m_2(x) := \sum_{k=0}^\infty k^2 x_k = \frac{\partial^2}{\partial \xi^2} g_\xi(x)\big|_{\xi = 0}, \quad \bar m_2(x) := \sum_{k=0}^\infty (k - m_1(x))^2 x_k = m_2(x) - m_1(x)^2 
\end{align}
and we define 
\begin{equation}
\label{eq:Gamma}
\begin{aligned}
    \Gamma(t, \xi, \eta) &:= 3 \exp\Big(\theta(1 - e^{-t})^2 \Big) - 2 \exp\Big(\theta(1 - e^{- (t + \xi)})(1 - e^{-t}) \Big) \\ & \qquad - 2 \exp\Big(\theta(1 - e^{-(t + \eta)})(1 - e^{-t}) \Big) + \exp\Big( \theta(1 - e^{-(t + \xi)})(1 - e^{-(t + \eta)})\Big),
\end{aligned}    
\end{equation}
which is the integrand in Theorem~\ref{T1}. 

\noindent\emph{Proof of \eqref{eq:cor1}:} Recall \eqref{eq:Ehxi}, and use \eqref{eq:mder} in order to obtain \begin{align*}
    \mathbb E\big(&m_1(X(\tau))\big) = -\mathbb E\left(\frac{\partial g_\xi(X(\tau))}{\partial \xi} \bigg|_{\xi = 0}\right)\\
&\overset{\triangle}= - \frac{\partial}{\partial \xi}  \mathbb E\left(g_\xi(X(\tau))\right)\bigg|_{\xi = 0}\\
    &= \theta - \frac 1{\nu \alpha} \int_0^{\tau \alpha} \frac{\partial}{\partial \xi} \exp\Big(\theta(1 - e^{-t})^2 \Big) - \exp\Big(\theta(1 - e^{-(t + \xi)})(1 - e^{-t}) \Big) \Big|_{\xi = 0} dt \\ & = \theta + \frac{1}{2\nu \alpha} \int_0^\tau \frac{\partial}{\partial s} \exp\Big( \theta(1 - e^{-\alpha s})^2 \Big) ds \\ & = \theta +  \frac{1}{2\nu \alpha} 
  \Big(\exp\big(\theta (1-e^{-\alpha \tau })^2\big) - 1\Big) + \mathcal O\Big(\frac{1}{\nu^2}\Big)
\end{align*}
We justify the interchange of derivative and expectation at $(\triangle)$. For $h>0$,
\begin{align*}
0 \leq -\frac{g_h(X_\tau)-g_0(X_\tau)}{h}
&=
\sum_{k=0}^\infty X_k(\tau)\frac{1-e^{-hk}}{h} \leq
\sum_{k=0}^\infty kX_k(\tau)
=
m_1(X_\tau).
\end{align*}

Since $\mathbb E(m_1(X_\tau))<\infty$, dominated convergence yields
$$
\lim_{h\downarrow0}
\mathbb E\left(
-\frac{g_h(X_\tau)-g_0(X_\tau)}{h}
\right)
=
\mathbb E\left(
-\left.\frac{\partial}{\partial\xi}g_\xi(X_\tau)\right|_{\xi=0}
\right).
$$

\noindent\emph{Proof of \eqref{eq:cor21}:}
Rearranging \eqref{eq:Ehxi} gives
\begin{align*}
    \mathbb E\big(g_\xi(X(\tau))\big) & = e^{-\theta(1 - e^{-\xi})} + \frac 1{\nu \alpha} \int_0^{\tau\alpha} \exp\Big(-\theta e^{-t}\Big) \Big(  \exp\Big(\theta(e^{-\xi} - e^{-t}(1 - e^{-t}))\Big)  \\ & \qquad \qquad \qquad \qquad \qquad - \exp\Big(\theta e^{-\xi}(1 - e^{-t} + e^{-2t})\Big)\Big)dt + \mathcal O\Big(\frac{1}{\nu^2}\Big).
\end{align*}
Viewing this as a power series in $e^{-\xi}$ and collecting terms of $e^{- k \xi}$ gives the first result. \eqref{eq:cor22} follows by setting $k=0$.

\noindent\emph{Proof of \eqref{eq:cor31}:}
The definition of $\Gamma$ from \eqref{eq:Gamma} implies
\begin{align*}
\Gamma(t, \xi, 0) & = \exp\Big(\theta(1 - e^{-t})^2 \Big) - \exp\Big(\theta(1 - e^{- (t + \xi)})(1 - e^{-t}) \Big).
\end{align*}
We compute
\begin{align*}
    \frac{\partial}{\partial \xi} \Gamma(t,\xi, 0)\Big|_{\xi=0} & = - \theta (1 - e^{-t}) e^{-t} \exp\Big(\theta(1 - e^{- t})^2 \Big) = - \frac 12 \frac{\partial}{\partial t} \exp\Big(\theta(1 - e^{-t})^2\Big), \\
    \frac{\partial^2}{\partial \xi^2} \Gamma(t,\xi, 0)\Big|_{\xi=0} & = - \theta (1 - e^{-t}) \frac{\partial}{\partial\xi} e^{-(t + \xi}) \exp\Big(\theta(1 - e^{- (t + \xi)})(1 - e^{-t}) \Big) \Big|_{\xi=0} \\ & = \theta(1 - e^{-t}) e^{-t} \Big(1 - \theta( 1 - e^{-t})e^{-t} \Big)\exp\Big(\theta(1 - e^{- t})^2 \Big), \\
    \frac{\partial^2}{\partial \xi \partial\eta} \Gamma(t,\xi,\eta)\Big|_{\xi=\eta=0} & = \theta e^{-t}\frac{\partial}{\partial \xi} (1 - e^{-(t + \xi)}) \exp\Big( \theta(1 - e^{-(t + \xi})(1 - e^{-t})\Big)\Big|_{\xi = 0} \\ & = 
    \big(\theta e^{-2t} + (\theta(1 - e^{-t})e^{-t})^2\big) \exp\Big(\theta(1 - e^{- t})^2 \Big).
\end{align*}
Moreover, 
\begin{align*}
    \frac{\partial^2}{\partial\xi^2} e^{-\theta(1- e^{-\xi})}\Big|_{\xi = 0} & = - \frac{\partial}{\partial\xi} \theta e^{-\xi} e^{-\theta(1- e^{-\xi})}\Big|_{\xi = 0} = \theta + \theta^2,\\
    \frac{\partial}{\partial\xi \partial\eta} e^{-\theta(2- e^{-\xi} - e^{-\eta})}\Big|_{\xi = \eta = 0} & = \theta^2. 
\end{align*}
Together, this gives
\begin{align*}
    \mathbb E&\big(\bar m_2(X(\tau))\big) \\ & = \Big(\frac{\partial^2}{\partial\xi^2} - \frac{\partial^2}{\partial\xi \partial \eta}\Big) \exp\Big( \!-\theta(2 - e^{-\xi} - e^{-\eta}\Big)\Big(1 + \frac{1}{\nu \alpha} \int_0^{\tau\alpha} \Gamma(t, \xi, \eta) dt \Big)\Big|_{\xi = \eta = 0} \!\! + \mathcal O\Big(\frac{1}{\nu^2}\Big) \\ & = \theta + \frac{1}{\nu \alpha} \int_0^{\tau\alpha} \big( \theta e^{-t}(1 - 2e^{-t})) - 2\theta^2(1 - e^{-t})e^{-t} \big)\exp\Big(\theta(1 - e^{-t})^2 \Big) dt  + \mathcal O\Big(\frac{1}{\nu^2}\Big) \\ & = \theta - \frac{1}{\nu \alpha} \int_0^{\tau \alpha} \theta \frac{\partial}{\partial t} e^{-t}(1 - e^{-t}) \exp\Big(\theta(1 - e^{-t})^2 \Big) dt + \mathcal O\Big(\frac{1}{\nu^2}\Big)\\ & = \theta - \frac{\theta}{\nu \alpha}e^{-\tau\alpha}(1 - e^{-\tau\alpha}) \exp\Big(\theta(1 - e^{-\tau\alpha})^2 \Big) + \mathcal O\Big(\frac{1}{\nu^2}\Big)
\end{align*}

\noindent\emph{Proof of \eqref{eq:cor32}:}
Note that
\begin{align*}
    \mathbb E\big(m_1(X(\tau))\big)^2 & = \theta^2 - \frac{2\theta}{\nu \alpha} \frac{\partial}{\partial\xi}\int_0^{\tau\alpha}  \Gamma(t,\xi,\eta) dt \Big|_{\xi = \eta = 0} + \mathcal O\Big(\frac{1}{\nu^2}\Big).
\end{align*}
So, we write
\begin{align*}
    \mathbb V(m_1(X(\tau))) & = \frac{\partial^2}{\partial\xi \partial \eta} \exp\Big( -\theta\Big(2 - e^{-\xi} - e^{-\eta}\Big)\Big)\Big(1 + \frac{1}{\nu \alpha} \int_0^{\tau\alpha} \Gamma(t, \xi, \eta) dt \Big)\Big|_{\xi = \eta = 0}  \\ & \qquad - \theta^2  + \frac{2\theta}{\nu \alpha} \frac{\partial}{\partial\xi} \int_0^{\tau\alpha} \Gamma(t,\xi,\eta)dt\Big|_{\xi=\eta = 0} +  \mathcal O\Big(\frac{1}{\nu^2}\Big)  \\ & = \frac{1}{\nu \alpha} \frac{\partial} {\partial\xi \partial \eta} \int_0^{\tau\alpha} \Gamma(t, \xi, \eta) dt\Big|_{\xi = \eta = 0} +  \mathcal O\Big(\frac{1}{\nu^2}\Big) 
\end{align*}
and the result follows. We can justify the change of limit and integral analogously to the proof of \eqref{eq:cor1}.

\noindent\emph{Proof of \eqref{eq:cor4}:}
We have 
\begin{align*}
&\operatorname{Cov}(m_1(X(\tau)), X_0(\tau)) = \lim_{\eta \to\infty} \mathbb E\big(g_\eta(X(\tau))\big) \frac{\partial}{\partial\xi} \mathbb E\big(g_\xi(X(\tau))\big)\Big|_{\xi=0}\\ & \qquad \qquad \qquad \qquad \qquad \qquad \qquad \qquad  - \lim_{\eta \to\infty} \frac{\partial}{\partial\xi} \mathbb E\big(g_\xi(X(\tau))g_\eta(X(\tau))\big)\Big|_{\xi=0} \\ & = \Big( e^{-\theta} + \frac{e^{-\theta}}{\nu \alpha} \int_0^{\tau\alpha} \Gamma(t, 0, \infty) dt \Big) \Big(- \theta + \frac{1}{\nu \alpha} \int_0^{\tau\alpha} \frac{\partial}{\partial\xi} \Gamma(t,\xi, 0) \Big|_{\xi = 0} dt \Big) \\ & \qquad \qquad \qquad \qquad - \frac{\partial}{\partial\xi} \Big( e^{-\theta(2 - e^{-\xi})}\Big(1  + \frac{1}{\nu \alpha} \int_0^{\tau\alpha} \Gamma(t, \xi, \infty) \Big|_{\xi = 0} dt \Big) \Big) + \mathcal O\Big(\frac{1}{\nu^2}\Big)
\\ & = -\theta e^{-\theta} + \frac{e^{-\theta}}{\nu \alpha} \int_0^{\tau\alpha} \Big(- \theta \Gamma(t,0,\infty) + \frac{\partial}{\partial\xi} \Gamma(t,\xi,0) \Big|_{\xi = 0}\Big) dt \\ & \qquad \qquad \qquad + \theta e^{-\theta} + \frac{e^{-\theta}}{\nu \alpha} \int_0^{\tau\alpha} \theta \Gamma(t,0,\infty) - \frac{\partial}{\partial\xi} \Gamma(t,\xi,\infty)\Big|_{\xi = 0} dt + \mathcal O\Big(\frac{1}{\nu^2}\Big)
 \\ & 
= \frac{e^{-\theta}}{\nu \alpha} \int_0^{\tau\alpha} \frac{\partial}{\partial\xi}\Big( \Gamma(t,\xi,0) - \Gamma(t,\xi,\infty)\Big) \Big|_{\xi = 0} dt + \mathcal O\Big(\frac{1}{\nu^2}\Big)
\\ & = \frac{e^{-\theta}}{\nu \alpha} \int_0^{\tau\alpha} 
\frac{\partial}{\partial\xi} \Big( \exp\Big( \theta(1 \! - e^{-(t + \xi)})(1 \!- e^{-t}) \Big) - \exp\Big( \theta(1 - e^{-(t + \xi)}) \Big) \Big)\Big|_{\xi = 0} dt + \mathcal O\Big(\frac{1}{\nu^2}\Big)
\\ & = \frac{e^{-\theta}}{\nu \alpha} \int_0^{\tau\alpha} 
\frac{\partial}{\partial t} \Big( \tfrac 12 \exp\Big( \theta(1 - e^{-t})^2 \Big) - \exp\Big( \theta(1 - e^{-t}) \Big) \Big)dt + \mathcal O\Big(\frac{1}{\nu^2}\Big)
\\ & = \frac{e^{-\theta}}{\nu \alpha} \Big( \tfrac 12 \exp\Big( \theta(1 - e^{-\tau\alpha})^2 \Big) - \exp\Big( \theta(1 - e^{-\tau\alpha})\Big) + \tfrac 12\Big) + \mathcal O\Big(\frac{1}{\nu^2}\Big)
\end{align*}

\subsection{Calculations for Remark~\ref{rem:sanity}}
\label{ss:sanity}
For \eqref{eq:sanity}, taking the time derivative in \eqref{eq:cor1}  gives
$$
\frac{d}{d\tau}\mathbb E\big(m_1(X(\tau))\big) = \frac1\nu \theta e^{-\alpha\tau} \left(1-e^{-\alpha\tau}\right) \exp\left(\theta\left(1-e^{-\alpha\tau}\right)^2\right) + \mathcal O\left(\frac{1}{\nu^2}\right).
$$
where we use that the order of the error term is stable under differentiation. Using $\lambda=\alpha\theta$ and \eqref{eq:cor31}, we obtain
\begin{align*}
\lambda-\alpha\mathbb E\big(\bar m_2(X(\tau))\big) &= \alpha\theta - \alpha \Big( \theta - \frac{\theta}{\nu \alpha} e^{-\alpha\tau} \Big(1-e^{-\alpha\tau}\Big) \exp\Big(\theta\Big(1-e^{ - \alpha\tau}\Big)^2\Big) \Big) + \mathcal O\Big(\frac{1}{\nu^2}\Big) \\ &= \frac 1\nu \theta e^{- \alpha\tau} \Big(1-e^{- \alpha\tau}\Big) \exp\Big(\theta\Big(1-e^{-\alpha\tau}\Big)^2\Big) + \mathcal O\Big(\frac{1}{\nu^2}\Big).
\end{align*}
Thus both sides of \eqref{eq:sanity} agree up to a summand $\mathcal O(\nu^{-2})$.

For \eqref{eq:sanity2}, taking the time derivative in \eqref{eq:cor22}  gives
\begin{align*}
\frac{d}{d\tau}\mathbb E\big(X_0(\tau)\big) & = - \frac1{\nu} \exp\Big(-\theta e^{-\alpha\tau}\Big) \Big( 1 - \exp\Big(- \theta e^{-\alpha\tau}(1 - e^{-\tau\alpha})\Big)  \Big) + \mathcal O\left(\frac{1}{\nu^2}\right)    
\end{align*}
where we use that the order of   the error term is stable under differentiation. Then, we compute
\begin{align*}
    \mathbb E\big(m_1(X(\tau)) X_0(\tau)\big) & = - \lim_{\eta \to\infty} \frac{\partial}{\partial\xi} \mathbb E\big(g_\xi(X(\tau))g_\eta(X(\tau))\big) \Big|_{\xi = 0} \\ & = - \frac{\partial}{\partial\xi} \Big( e^{-\theta(2 - e^{-\xi})}\Big(1  + \frac{1}{\nu \alpha} \int_0^{\tau\alpha} \Gamma(t, \xi, \infty) dt \Big) \Big)\Big|_{\xi = 0} + \mathcal O\Big(\frac{1}{\nu^2}\Big) \\ & = \theta e^{-\theta} + \frac{e^{-\theta}}{\nu \alpha} \int_0^{\tau\alpha} \theta \Gamma(t,0,\infty) - \frac{\partial}{\partial\xi} \Gamma(t,\xi,\infty)\Big|_{\xi = 0} dt + \mathcal O\Big(\frac{1}{\nu^2}\Big)
    \\ & = \theta e^{-\theta} + \frac{\theta e^{-\theta}}{\nu \alpha} \Big( \int_0^{\tau\alpha} \Big( \exp\Big(\theta(1 - e^{-t})^2\Big) - \exp\Big(\theta(1 - e^{-t}) \Big) \Big) dt \\ & \quad - \! \int_0^{\tau\alpha} \frac{\partial}{\partial t} \Big( \exp\Big(\theta(1 - e^{-t})\Big) - \exp\Big(\theta(1 - e^{-t})^2 \Big)\Big) dt \Big) + \mathcal O\Big(\frac{1}{\nu^2}\Big)
\end{align*}
Now we use \eqref{eq:cor4} to obtain
\begin{align*}
\mathbb E\big( & (\alpha m_1(X(\tau)) - \lambda)X_0(\tau)\big) = \alpha \mathbb E\big((m_1(X(\tau)) - \theta)X_0(\tau)\big) \\ 
& 
= \frac{e^{-\theta}}{\nu} \Big( - \Big( \exp\Big(\theta(1 - e^{-\tau\alpha})\Big) - \exp\Big(\theta(1 - e^{-\tau\alpha})^2 \Big)\Big) \\ & \qquad \qquad + \int_0^{\tau\alpha} \theta \Big( \exp\Big(\theta(1 - e^{-t})^2\Big) - \exp\Big(\theta(1 - e^{-t}) \Big) \Big) dt
\Big) \\ &  \qquad \qquad \qquad \qquad - \frac{\theta}{\nu} \int_0^{\tau \alpha} \exp\Big(-\theta e^{-t}\Big)\Big(1 - \exp\Big(-\theta e^{-t}(1 - e^{-t})\Big)\Big) dt
\\ &
= - \frac{1}{\nu} \Big( \Big( \exp\Big(- \theta e^{-\tau\alpha}\Big) - \exp\Big(-\theta e^{-\tau\alpha}(2  - e^{-\tau\alpha}) \Big)\Big) \\ & \qquad \qquad + \theta \int_0^{\tau\alpha} \Big( \exp\Big(-\theta e^{-t}(2 - e^{-t})\Big) - \exp\Big(-\theta e^{-t} \Big) \Big) dt
\Big) \\ &  \qquad \qquad \qquad \qquad - \frac{\theta}{\nu} \int_0^{\tau \alpha} \exp\Big(-\theta e^{-t}\Big)\Big(1 - \exp\Big(-\theta e^{-t}(1 - e^{-t})\Big) \Big)dt
\\ & 
= - \frac1{\nu} \exp\Big(-\theta e^{-\alpha\tau}\Big) \Big( 1 - \exp\Big(- \theta e^{-\alpha\tau}(1 - e^{-\tau\alpha})\Big)  \Big) + \mathcal O\left(\frac{1}{\nu^2}\right).
\end{align*}
Thus both sides of \eqref{eq:sanity2} agree up to a summand $\mathcal O(\nu^{-2})$.

\subsection{Calculations for Section~\ref{ss:interpretation}}
\label{rem:interpretation}
Note that \eqref{eq:cor1},  \eqref{eq:cor31} and \eqref{eq:cor4} come without any integral in the $\mathcal O()$-term, so the calculation is more direct. Moreover, we have by our choice of $\sigma, \tau$ (note $\alpha \sigma = c \tfrac{\log\theta}{\theta} \downarrow 0$)
\begin{align}
    \label{eq:sigmatauapp}
    & e^{-\alpha \sigma} \to 1, \quad \theta(1 - e^{-\alpha\sigma}) = c \log\theta \to \infty, \quad \theta(1 - e^{-\alpha\sigma})^2 = \frac{c^2 \log^2\theta}{\theta} \downarrow 0, \quad \theta e^{-\alpha \tau} \downarrow 0.
\end{align}
For \eqref{eq:cor1app}/\eqref{eq:cor14aapp}/\eqref{eq:cor4app}, the asymptotics arise directly by \eqref{eq:cor1}/\eqref{eq:cor31}/\eqref{eq:cor4} together with \eqref{eq:sigmatauapp}. 

For \eqref{eq:cor2app}, we write for  $\sigma := \tfrac c\lambda \log\theta$, to leading order,
\begin{align*}
    \int_0^{\sigma \alpha} &  \exp\Big(-\theta e^{-t}\Big)\Big(1 - \exp\Big(-\theta e^{-t}(1 - e^{-t})\Big) dt \stackrel{s:=e^{-t}} =
    e^{-\theta} \int_{e^{-\sigma \alpha}}^1 \frac{e^{\theta(1 - s)} - e^{\theta(1 - s)^2}}{s}ds \\ & 
    = e^{-\theta} \int_0^{1 - e^{-\sigma\alpha}} \frac{e^{\theta s} - e^{\theta s^2}}{1 - s} ds
    \approx \int_{e^{-\sigma\alpha}}^1 e^{-\theta s} ds = \frac{e^{-\theta}}{\theta} \Big(e^{\theta (1 - e^{-\sigma \alpha})} - 1\Big) 
    \approx e^{-\theta} \theta^{c-1},
\end{align*}
which shows the first assertion. For $\tau \gg \tfrac 1 \alpha \log\theta$, the second assertion follows with
\begin{align*}
    \int_0^{\tau \alpha} &  \exp\Big(-\theta e^{-t}\Big)\Big(1 - \exp\Big(-\theta e^{-t}(1 - e^{-t})\Big) dt \stackrel{s:=\theta e^{-t}} =
    \int_{\theta e^{-\tau\alpha}}^\theta \frac{e^{-s} - e^{-2s + s^2/\theta}}{s} ds  \\ & \approx \int_0^\theta \frac{e^{-s} - e^{-2s}}{s} ds \approx \int_0^\infty \int_1^2 e^{-as} da ds = \int_1^2 \int_0^\infty  e^{-as} ds da = \int_1^2 \frac 1a da = \log 2.
\end{align*}
Let us turn to \eqref{eq:cor3bapp}. Here, the terms $\theta(1 - e^{-t})^2 \downarrow 0$ by \eqref{eq:sigmatauapp}, hence
\begin{align*}
    \int_0^{\sigma \alpha} & \theta e^{-2t}\big(1 + \theta(1 - e^{-t})^2 \big) \exp\Big(\theta(1 - e^{- t})^2 \Big) dt \approx \int_0^{\sigma\alpha} \theta e^{-2t} dt \approx c \log\theta,
\end{align*}
showing the first assertion. For the second, 
\begin{align*}
    \int_0^{\tau \alpha} & \theta e^{-2t}\big(1 + \theta(1 - e^{-t})^2 \big) \exp\Big(\theta(1 - e^{- t})^2 \Big) dt \\ & \stackrel{z = \theta e^{-t}} \approx e^\theta \int_0^\infty \tfrac{z^2}{\theta} (1 + \theta) \exp\big(-2 z  \big) dz \approx e^\theta \int_0^\infty z e^{-2z} dz = \tfrac 14 e^\theta. 
\end{align*}

\begin{appendix}

\section{Martingales along the Duhamel formula}
\label{app:duhamel}
The main result of this section, Proposition~\ref{T:434appl}, provides the martingales that are used for the proof of Theorem~\ref{T1}, see also the explanation in Remark~\ref{rem:overview}. Because of its independent interest we will phrase Proposition~\ref{T:434appl} in a framework that is more general than that of paper's main part. In Remark~\ref{rem:duhamel} we will point to the connection between the martingales appearing in~\eqref{eq:duhamelmart} and the Duhamel formula for semigroups.

\begin{proposition}\label{T:434appl}
Let $(E,r)$ be a complete and separable metric space, $\mathcal D \subseteq \mathcal M(E)$ linear, and $G^{(1)}, G^{(2)} : \mathcal D \to \mathcal M(E)$. Assume $G^{(1)}$ is the generator in the sense of bounded pointwise limits of the semigroup $S$ (i.e.\ $(S_{h}f(x) - f(x))/h \xrightarrow{h\downarrow 0} G^{(1)}  f(x)$ boundedly for all $f \in \mathcal D$, $t\geq 0$, and $x\in E$). 

Let $X$ be a solution of the $(G^{(1)} + G^{(2)}, \mathcal D)$ local martingale problem with càdlàg paths, and $\tau \geq 0$. Fix $f \in \mathcal D$ and assume
\begin{itemize}
    \item[(i)] $S_{t} f \in \mathcal D$ and continuous, for all $t \in [0,\tau]$,
    \item[(ii)] $t \mapsto G^{(2)} S_{t}f(x)$ is right-continuous for all $x$ on $[0,\tau]$,
    \item[(iii)] there exist $L \in \mathcal M(E)$, such that for all $t \in [0, \tau]$ and $x\in E$ 
\begin{align}
\label{eq:L1}
    &|S_{t} f(x) | + |G^{(1)} S_{t}f(x)| + |G^{(2)} S_{t}f(x)| \leq L(x),
\end{align}
and
\begin{align}
    \label{eq:L2}
    & \mathbb E\big(\sup_{0\leq t \leq \tau} L(X(t))\big) < \infty.
\end{align}
\end{itemize}
Then
\begin{equation} \label{eq:duhamelmart}
  M_f(t) := S_{\tau-t} f(X(t)) - \int_0^t G^{(2)} S_{\tau-s} f(X(s)) \, ds,
  \qquad 0 \le t \le \tau
\end{equation}
is a martingale.
%\begin{align}
%\label{eq:mart}
%\Big(S_{\tau - t} f(X(t)) - \int_0^t G^{(2)} S_{\tau -s} f(X(s))ds\Big)_{0 \leq t \leq \tau}     
%\end{align}
%is a martingale.   
\end{proposition}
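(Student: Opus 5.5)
The plan is to apply the local martingale property to the time-dependent function $(t,x)\mapsto S_{\tau-t}f(x)$ on a partition of $[0,\tau]$, pass to the limit as the mesh goes to zero, and then upgrade from a local martingale to a true martingale using the integrable dominating function $L$ from (iii).

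\textbf{Step 1: reduction to a conditional identity.} Fix $0\le u\le v\le\tau$. It suffices to show
$$\mathbb E\Big(S_{\tau-v}f(X(v)) - S_{\tau-u}f(X(u)) - \int_u^v G^{(2)}S_{\tau-s}f(X(s))\,ds \,\Big|\, \mathcal F_u\Big)=0.$$
Integrability of $M_f(t)$ is immediate from \eqref{eq:L1}--\eqref{eq:L2}, since $|M_f(t)|\le (1+\tau)\sup_{s\le\tau}L(X(s))$.

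\textbf{Step 2: one partition interval.} Take a partition $u=t_0<t_1<\dots<t_n=v$ and telescope:
$$S_{\tau-t_{i+1}}f(X(t_{i+1}))-S_{\tau-t_i}f(X(t_i)) = \big[S_{\tau-t_{i+1}}f(X(t_{i+1}))-S_{\tau-t_{i+1}}f(X(t_i))\big] + \big[S_{\tau-t_{i+1}}f - S_{\tau-t_i}f\big](X(t_i)).$$
For the first bracket, $g:=S_{\tau-t_{i+1}}f\in\mathcal D$ by (i). The local martingale property gives $g(X(t))-g(X(t_i))-\int_{t_i}^t (G^{(1)}+G^{(2)})g(X(s))\,ds$ as a local martingale. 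This process is dominated by $(2+2\tau)\sup_{s\le\tau}L(X(s))$, an integrable bound, so it is a class (D) local martingale and hence a true martingale. Its conditional increment therefore vanishes. For the second bracket, the semigroup property gives
$$S_{\tau-t_i}f - S_{\tau-t_{i+1}}f = \int_{0}^{t_{i+1}-t_i} G^{(1)}S_{\tau-t_{i+1}+r}f\,dr.$$
This identity comes from the bounded-pointwise generator assumption via a standard Riemann argument for $r\mapsto S_r f$. I would verify it carefully, using the bound $|G^{(1)}S_r f|\le L$ to differentiate in $r$.

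\textbf{Step 3: summing and passing to the limit.} Summing over $i$ and conditioning on $\mathcal F_u$ yields
$$\mathbb E\big(S_{\tau-v}f(X(v))-S_{\tau-u}f(X(u))\,\big|\,\mathcal F_u\big) = \mathbb E\Big(\sum_i\int_{t_i}^{t_{i+1}} \big[(G^{(1)}+G^{(2)})S_{\tau-t_{i+1}}f(X(s)) - G^{(1)}S_{\tau-t_{i+1}+(t_{i+1}-s)\cdots}\big]\,ds\,\Big|\,\mathcal F_u\Big).$$
The two $G^{(1)}$ contributions are, respectively, $\int G^{(1)}S_{\tau-t_{i+1}}f(X(s))\,ds$ and $\int G^{(1)}S_{\tau-s'}f(X(t_i))\,ds'$. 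As the mesh goes to $0$, both converge to $\int_u^v G^{(1)}S_{\tau-s}f(X(s))\,ds$ and cancel. The $G^{(2)}$ term converges to $\int_u^v G^{(2)}S_{\tau-s}f(X(s))\,ds$ by the right-continuity in (ii), the càdlàg paths of $X$, and the continuity of $S_tf$ from (i). All limits are justified by conditional dominated convergence with the dominating variable $(1+\tau)\sup_{s\le\tau}L(X(s))$.

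\textbf{Main obstacle.} The hardest step is the cancellation of the $G^{(1)}$ terms. Two things are needed: joint continuity of $(t,x)\mapsto G^{(1)}S_tf(x)$, or at least enough regularity to evaluate it along $X(t_i)$ versus $X(s)$, and the differentiability $\partial_r S_r f = G^{(1)}S_r f$. The hypotheses only give right-continuity for $G^{(2)}$. I would therefore route the argument so that the $G^{(1)}$ term is never compared pathwise. Replace the first bracket by $g(X(t_{i+1}))-g(X(t_i))$ with $g = S_{\tau-t_i}f$ (the left endpoint) plus an error $[S_{\tau-t_{i+1}}f-S_{\tau-t_i}f](X(t_{i+1}))$. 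Writing this error as $-\int G^{(1)}S\ldots$ evaluated at the shifted point, and combining it with the martingale drift over the same interval, the two pieces become $\int G^{(1)}S_{\tau-t_i}f(X(s))\,ds$ and $\int G^{(1)}S_{\tau-r}f(X(t_{i+1}))\,dr$, which still need to be compared. If this fails, I would fall back on the fact that in the paper's application $S_tf$ is explicit and smooth in $(t,x)$, so joint continuity holds there.
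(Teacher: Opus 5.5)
Your architecture matches the paper's proof. Lemma~\ref{l:434} is the Ethier--Kurtz partition/telescoping argument you redo by hand. Upgrading the fixed-time local martingales through the integrable bound $(2+\tau)\sup_{s\le\tau}L(X(s))$ (class (D)) is a legitimate replacement for the paper's localisation by $\sigma_n$ plus dominated convergence. The gap is in the passage to the limit, which is the real content, and it fails at two points.

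\emph{First, your split is on the wrong side for (ii).} In Step~2 you apply the martingale problem to $g=S_{\tau-t_{i+1}}f$. The $G^{(2)}$-integrand is then $G^{(2)}S_{\tau-t_{i+1}}f(X(s))$ for $s\in[t_i,t_{i+1}]$, with $\tau-t_{i+1}\uparrow\tau-s$. This needs \emph{left}-continuity of $t\mapsto G^{(2)}S_tf(x)$, whereas (ii) gives only right-continuity. An a.e.\ argument does not rescue this, because the exceptional times may depend on $x=X(s)$. The paper, via \eqref{eq:310prime}--\eqref{eq:311prime}, splits the other way:
\begin{itemize}
\item the time increment $[S_{\tau-t_{i+1}}f-S_{\tau-t_i}f](X(t_{i+1}))$ is taken at the new position;
\item the martingale problem is applied to $S_{\tau-t_i}f$, producing $\int_{t_i}^{t_{i+1}}(G^{(1)}+G^{(2)})S_{\tau-t_i}f(X(s))\,ds$ with $\tau-t_i\downarrow\tau-s$.
\end{itemize}
This is what \eqref{eq:312prime} encodes, and it is why (ii) is one-sided. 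Your ``main obstacle'' paragraph reaches exactly this split, but you do not use it for $G^{(2)}$.

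\emph{Second, the $G^{(1)}$ cancellation is never established.} Retreating to the smoothness of the concrete application does not prove the proposition as stated. Even in the correct split, $\sum_i\int_{t_i}^{t_{i+1}}G^{(1)}S_{\tau-t_i}f(X(s))\,ds$ and $\sum_i\int_{t_i}^{t_{i+1}}G^{(1)}S_{\tau-r}f(X(t_{i+1}))\,dr$ reach the common limit $\int G^{(1)}S_{\tau-s}f(X(s))\,ds$ only under extra regularity. An example is right-continuity of $t\mapsto G^{(1)}S_tf(x)$ together with continuity of $x\mapsto G^{(1)}S_tf(x)$. Neither is among (i)--(iii). The paper's proof uses such regularity tacitly: its check of \eqref{eq:312prime} concerns all of $w=(G^{(1)}+G^{(2)})S_{\tau-s}f$, while (ii) covers only the $G^{(2)}$ part. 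In the application the needed regularity comes from the explicit formulas of Corollary~\ref{cor2.4}.

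You can avoid this entirely by never comparing $G^{(1)}$ terms at different points.
\begin{itemize}
\item For $r_0\le r_1<\tau$ and $A\in\mathcal F_{r_0}$, set $\Phi(a,b):=\mathbb E[1_A\,S_{\tau-a}f(X(b))]$.
\item Your class-(D) step and your semigroup identity make $\Phi$ Lipschitz in each variable, with constant $\mathbb E\sup_{s\le\tau}L(X(s))$. Almost everywhere, $\partial_b\Phi=\mathbb E[1_A(G^{(1)}+G^{(2)})S_{\tau-a}f(X(b))]$ and $\partial_a\Phi=-\mathbb E[1_AG^{(1)}S_{\tau-a}f(X(b))]$.
\item Hence $\Phi$ is jointly Lipschitz. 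By Rademacher and Fubini, its derivative along $(1,1)$ equals $\mathbb E[1_AG^{(2)}S_{\tau-a}f(X(b))]$ a.e., so the $G^{(1)}$ terms cancel at the same point.
\item Integrating along $b=a+c$ for a.e.\ small $c>0$ gives $\Phi(r_1,r_1+c)-\Phi(r_0,r_0+c)=\int_{r_0+c}^{r_1+c}\mathbb E[1_AG^{(2)}S_{\tau-\sigma+c}f(X(\sigma))]\,d\sigma$.
\item Letting $c\downarrow0$ uses only (ii) and dominated convergence. This yields $\mathbb E[1_A(M_f(r_1)-M_f(r_0))]=0$, and the case $r_1=\tau$ follows by continuity.
\end{itemize}
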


\begin{remark}\label{rem:duhamel} 
For the martingale $M_f$ as in \eqref{eq:duhamelmart}, with $X(0) =x$, the equality
  $\mathbb E(M_f(\tau)) = \mathbb E(M_f(0))$ translates into
  \begin{equation}\label{duha}
    \mathbb E_x(f(X(\tau))) - \int_0^\tau \mathbb E_x(G^{(2)} S_{\tau-s} f(X(s))) \, ds = S_\tau f(x).
 \end{equation}
This relates the semigroup of $X$ (associated with the operator $G^{(1)} + G^{(2)}$) with the semigroup $S$  (generated by $G^{(1)}$), and thus~\eqref{duha} can be seen as an instance of \emph{Duhamel's formula}. See e.g.\ Section~3.1, eq.~(1.2) in \cite{pazy}. 

Here is (the core of) an argument for the martingale property of $M_f$.
 Since we know that $X$ satisfies a martingale problem
involving the operator $G^{(1)} + G^{(2)}$, it is fair to hope that
$$\int_0^t \left( \frac{\partial}{\partial s} + G^{(1)} + G^{(2)} \right)
S_{\tau-s} f(X(s)) \, ds = \int_0^t G^{(2)} S_{\tau-s} f(X(s)) \, ds$$ 
is the compensator whose subtraction from $S_{\tau-t} f(X(t))$ yields a martingale, namely $M_f$. 
Proposition~\ref{T:434appl} puts this reasoning on firm grounds. 
  \end{remark}

\medskip

For the proof of Proposition~\ref{T:434appl}, we give a localized form of Lemma 4.3.4 (a) from \cite{EthierKurtz1986}, here with a numbering of equations chosen parallel to the numbering of their analoga in~\mbox{\cite[p. 176-177]{EthierKurtz1986}}.   

\begin{lemma}[Stopping-time variant of \cite{EthierKurtz1986}, Lemma 4.3.4]
\label{l:434}
Let $X$ be a measurable stochastic process on a probability space $(\Omega, \mathcal F, \mathbb P)$ with càdlàg paths in the complete and separable metric space $(E,r)$, adapted to the filtration $\{\mathcal G_t\}$. Let $\sigma$ be a $\{\mathcal G_t\}$-stopping time, and
$u, v : [0, \infty) \times E \times \Omega \to \mathbb R$ be $\mathcal B[0,\infty) \times \mathcal B(E) \times \mathcal F$-measurable, and let $w : [0, \infty) \times E \times \Omega \to \mathbb R$ be $\mathcal B[0,\infty) \times \mathcal B(E) \times \mathcal F$-measurable. Assume that $x\mapsto u(t, x, \omega)$ is continuous for fixed $t$ and $\omega$.

Suppose moreover there exists an integrable random variable $Y$, such that, almost surely,
\begin{equation}
\label{eq:dom}
  |u(s, X_{s \wedge \sigma})| + 
  |v(s, X_{s \wedge \sigma})| +
  |w(s, X_{s \wedge \sigma})| \leq Y
  \tag{D}
\end{equation}
for all $s \in [0, \infty)$. Suppose further that for every $t_2 > t_1 \geq 0$,
\begin{align}
  \mathbb E\big(u(t_2 \wedge \sigma, X_{t_2 \wedge \sigma}) - u(t_1 \wedge \sigma, X_{t_2 \wedge \sigma}) \mid \mathcal G_{t_1}\big)
  &= \mathbb E\Big(\int_{t_1 \wedge \sigma}^{t_2 \wedge \sigma}
      v(s, X_{t_2 \wedge\sigma})\, ds \,\Big|\, \mathcal G_{t_1} \Big).
  \label{eq:310prime}
  \tag{3.10$'$}\\[4pt]  
  \mathbb E\big(u(t_1\wedge \sigma, X_{t_2 \wedge \sigma}) - u(t_1\wedge \sigma, X_{t_1 \wedge\sigma}) \mid \mathcal G_{t_1}\big)
  & = \mathbb E\Big(\int_{t_1 \wedge \sigma}^{t_2 \wedge \sigma}
      w(t_1, X(s))\, ds \,\Big|\, \mathcal G_{t_1} \Big),
  \tag{3.11$'$}
\label{eq:311prime}
\end{align}
Moreover, 
\begin{equation}
  \lim_{\delta \to 0+}
    \mathbb E\Big(\, |w(t-\delta,  X_{t\wedge\sigma}) - w(t, X_{t\wedge\sigma})| \,\Big)
  = 0, \qquad t > 0.
  \tag{3.12$'$}
\label{eq:312prime}
\end{equation}
Then
\begin{equation}
\label{eq:316prime}
  M_t \;:=\; u(t \wedge \sigma, X_{t \wedge\sigma})
  - \int_0^{t \wedge \sigma}
    ( v(s, X(s)) + w(s, X(s)) )\, ds,
  \qquad t \geq 0,
  \tag{3.16$'$}
\end{equation}
is a $\{\mathcal G_t\}$-martingale.
\end{lemma}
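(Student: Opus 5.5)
We follow the proof of Lemma~4.3.4 in \cite{EthierKurtz1986}, inserting the stopping time $\sigma$ everywhere and using the dominating variable $Y$ from \eqref{eq:dom} in place of their uniform boundedness. Fix $t_2>t_1\ge 0$. We must show that $\mathbb E(M_{t_2}-M_{t_1}\mid\mathcal G_{t_1})=0$. Integrability of $M_t$ is immediate from \eqref{eq:dom}, since $|M_t|\le Y(1+2t)$.

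\textbf{Step 1: partition and telescope.} Take a partition $t_1=s_0<s_1<\dots<s_n=t_2$ of mesh $\to 0$. We write $u(t_2\wedge\sigma,X_{t_2\wedge\sigma})-u(t_1\wedge\sigma,X_{t_1\wedge\sigma})$ as a telescoping sum over $k$. Each summand is split into a time increment and a space increment:
\[
\big[u(s_{k+1}\wedge\sigma,X_{s_{k+1}\wedge\sigma})-u(s_k\wedge\sigma,X_{s_{k+1}\wedge\sigma})\big]+\big[u(s_k\wedge\sigma,X_{s_{k+1}\wedge\sigma})-u(s_k\wedge\sigma,X_{s_k\wedge\sigma})\big].
\]
We condition each summand first on $\mathcal G_{s_k}\supseteq\mathcal G_{t_1}$ and apply \eqref{eq:310prime} and \eqref{eq:311prime} with $(t_1,t_2)$ replaced by $(s_k,s_{k+1})$. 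The tower property then gives
\[
\mathbb E\big(u(t_2\wedge\sigma,\cdot)-u(t_1\wedge\sigma,\cdot)\mid\mathcal G_{t_1}\big)=\mathbb E\Big(\sum_k\int_{s_k\wedge\sigma}^{s_{k+1}\wedge\sigma}\big(v(s,X_{s_{k+1}\wedge\sigma})+w(s_k,X(s))\big)\,ds\;\Big|\;\mathcal G_{t_1}\Big).
\]

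\textbf{Step 2: pass to the limit in the $v$-term.} As the mesh goes to $0$, we have $X_{s_{k+1}\wedge\sigma}\to X_{s\wedge\sigma}$ for $s\in(s_k,s_{k+1}]$, by right-continuity of the càdlàg paths. So the $v$-integrand converges to $v(s,X(s))$ on $\{s\le\sigma\}$, provided $v$ is continuous in $x$. This continuity is not assumed, so we proceed as in Ethier--Kurtz. There, one uses that \eqref{eq:310prime} expresses $u(\cdot,x)$ as absolutely continuous in time with derivative $v$ at the fixed (conditioned) space point. The term can therefore be rewritten as $u(s_{k+1}\wedge\sigma,X_{s_{k+1}\wedge\sigma})-u(s_k\wedge\sigma,X_{s_{k+1}\wedge\sigma})$, and continuity of $x\mapsto u(t,x)$ handles the limit. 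In any case the integrands are bounded by $Y$, which is integrable, so conditional dominated convergence applies once pointwise convergence holds.

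\textbf{Step 3: pass to the limit in the $w$-term.} Here we replace $w(s_k,X(s))$ by $w(s,X(s))$, which is where \eqref{eq:312prime} enters. The error is
\[
\mathbb E\int_{t_1}^{t_2}\big|w(s-\delta_n(s),X_{s\wedge\sigma})-w(s,X_{s\wedge\sigma})\big|\,\mathbf 1_{s\le\sigma}\,ds,
\]
with $\delta_n(s)=s-s_k$. By Fubini and \eqref{eq:312prime}, the inner expectation tends to $0$ for each $s$, and it is bounded by $2\mathbb E Y$. Dominated convergence in $s$ then shows the error vanishes in $L^1$, hence also after conditioning.

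\textbf{Expected main obstacle.} The delicate point is Step~3 together with the measurability needed in Step~2. Condition \eqref{eq:312prime} is stated for a fixed shift $\delta$, whereas the partition yields the $s$-dependent shift $\delta_n(s)$. To handle this we would choose random or averaged partitions, as Ethier--Kurtz do: average over shifts of the partition grid, so that the effective shift becomes uniform and \eqref{eq:312prime} applies with a single $\delta$. Combining Steps 1--3 gives $\mathbb E(M_{t_2}-M_{t_1}\mid\mathcal G_{t_1})=0$.
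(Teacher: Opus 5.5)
Your route is the paper's: its proof is only the instruction to run the Ethier--Kurtz partition argument with $t$ replaced by $t\wedge\sigma$ and boundedness replaced by \eqref{eq:dom}. Your Step~1 is that argument and is fine.

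\textbf{The gap is Step~2.} Rewriting $\int_{s_k\wedge\sigma}^{s_{k+1}\wedge\sigma}v(s,X_{s_{k+1}\wedge\sigma})\,ds$ back as $u(s_{k+1}\wedge\sigma,X_{s_{k+1}\wedge\sigma})-u(s_k\wedge\sigma,X_{s_{k+1}\wedge\sigma})$ merely undoes Step~1. Continuity of $x\mapsto u(t,x)$ says nothing about whether the sum over $k$ tends to $\int_{t_1\wedge\sigma}^{t_2\wedge\sigma}v(s,X(s))\,ds$.

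In fact no argument can close this step from the hypotheses as stated. Take $E=\mathbb R$, $X_t=t$, $\sigma\equiv1$, $u\equiv0$, $w\equiv0$ and $v(s,x)=\mathbf 1_{\{s=x\}}$. Then:
\begin{itemize}
\item $u$ is continuous and \eqref{eq:dom} holds with $Y=1$;
\item \eqref{eq:310prime} holds because $\int_{t_1\wedge1}^{t_2\wedge1}\mathbf 1_{\{s=t_2\wedge1\}}\,ds=0$;
\item \eqref{eq:311prime} and \eqref{eq:312prime} are trivial.
\end{itemize}
Yet $M_t=-(t\wedge1)$ is not a martingale. What is missing is a hypothesis controlling $v$ away from the diagonal. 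One option is continuity of $x\mapsto v(s,x)$; then right-continuity of paths gives $v(s,X_{s_{k+1}\wedge\sigma})\to v(s,X_{s\wedge\sigma})$ pointwise, as in your first sentence of Step~2. Another is the $v$-analogue of \eqref{eq:312prime}, namely $\lim_{\delta\downarrow0}\mathbb E\,|v(t,X_{(t+\delta)\wedge\sigma})-v(t,X_{t\wedge\sigma})|=0$.

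\textbf{A second gap: off-diagonal domination.} The bounds you invoke (``the integrands are bounded by $Y$'', ``bounded by $2\mathbb E Y$'') concern $v(s,X_{s_{k+1}\wedge\sigma})$ and $w(s_k,X(s))$, whose time and space arguments refer to different times. But \eqref{eq:dom} only controls $u,v,w$ at $(s,X_{s\wedge\sigma})$. You need something like $|v(s,X_{r\wedge\sigma})|+|w(s,X_{r\wedge\sigma})|\le Y$ for all $s,r$.

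Both missing conditions do hold where the lemma is used, in the proof of Proposition~\ref{T:434appl}. There $L$ bounds $S_tf$, $G^{(1)}S_tf$ and $G^{(2)}S_tf$ for all $t\in[0,\tau]$ and all $x$, so $Y=\sup_{r\le\tau}L(X(r))$ dominates off the diagonal. Also $v=-G^{(1)}S_{\tau-s}f$ is continuous in $x$ and $X$ has continuous paths. So the repair is to add these hypotheses to the lemma rather than to look for a cleverer argument; the paper's one-line proof passes over the same point.

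Finally, your ``main obstacle'' in Step~3 is not one. For deterministic partitions, $\delta_n(s)\to0$ for each fixed $s$, so \eqref{eq:312prime} applies directly along that sequence. Dominated convergence in $s$, using the off-diagonal bound, then finishes without any averaging over partitions.
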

%\noindent
%{\bf Step 4:} 
%\begin{align*}
%N|\mathbb E_x[f(X(t))] - S_tf(x)| & = N\int_0^t \mathbb E[G^{(2)}S_{t-s}f(X(s))]ds \\ & \leq \mathbb E\Big[ \Big] ds
%\end{align*}

\begin{proof}
The argument follows as in the proof of Lemma~4.3.4 in Ethier and
Kurtz (1986), with deterministic times replaced by their minimum with
$\sigma$ and global boundedness replaced by the trajectory domination~\eqref{eq:dom}.
\end{proof}

\begin{proof}[Proof of Proposition \ref{T:434appl}]
    We use Lemma \ref{l:434} above and set 
    $$ u(s,x) := S_{\tau-s}f(x), \qquad v(s,x) := -G^{(1)} S_{\tau-s}f(x), \quad w(s,x) := (G^{(1)}+G^{(2)}) S_{\tau-s}f(x).$$
    Note that \eqref{eq:dom} holds with $Y := \sup_{0\leq t\leq \tau} L(X(t))$. By assumption, $S_{\tau-s}f \in \mathcal D$ for all $s \in [0,\tau]$, $s \mapsto w(s,x)$ is left-continuous on $[0,\tau]$ (i.e.\ \eqref{eq:312prime} holds), and
    \begin{equation*}\label{vw} v(s,x) + w(s,x) = G^{(2)} S_{\tau-s}f(x).
    \end{equation*}
    Introduce the stopping times $$\sigma_n := \tau \wedge \inf\{t \geq 0: L(X(t)) > n\} \uparrow \tau$$
    (since $\sup_{s\leq\tau} L(X(s)) < \infty$ almost surely by \eqref{eq:L2}). Consequently, by \eqref{eq:L1},
    $$ |u(s,X_{s\wedge \sigma_n})|,\ |v(s,X_{s\wedge \sigma_n})|,\ |w(s,X_{s\wedge \sigma_n})| \leq C n \quad \text{for all } s\in [0,\tau].$$
    For any $0 \le t_1 \le t_2 \le \tau$ and all realisations of $\sigma_n$ we find, using the assumption that $G^{(1)}$ is the generator of the semigroup $S$, that
    \begin{align*}
        u(t_2 \wedge \sigma_n, X_{t_2}) - u(t_1 \wedge \sigma_n, X_{t_2}) = \int_{t_1 \wedge \sigma_n}^{t_2 \wedge \sigma_n} v(s,X_{t_2}) ds,
    \end{align*}
    i.e.\ \eqref{eq:310prime} holds. 
    Moreover, since $X$ solves the local $(G^{(1)} + G^{(2)}, \mathcal D)$ martingale problem, for any $0\leq t_1 \leq t_2 \leq \tau$,
    \begin{align*}
        \mathbb E\big(u(t_1, X_{t_2\wedge \sigma_n}) - u(t_1, X_{t_1\wedge \sigma_n}) | \mathcal G_{t_1}\big) & = \mathbb E \Big( \int_{t_1 \wedge \sigma_n}^{t_2 \wedge \sigma_n} (G^{(1)} + G^{(2)})u(t_1, X(s)) ds \Big| \mathcal G_{t_1}\Big),
    \end{align*}
    i.e.\ \eqref{eq:311prime} holds.  Therefore, we conclude from Lemma~\ref{l:434} that
    $$ M_{t\wedge \sigma_n} := S_{\tau - (t\wedge \sigma_n)} f(X_{t\wedge\sigma_n}) - \int_0^{t\wedge \sigma_n} G^{(2)} S_{\tau-s} f(X(s)) ds $$
    is a martingale. From \eqref{eq:L1} and \eqref{eq:L2}, we know that $|M_{t\wedge\sigma_n}| \leq (1+\tau)\cdot Y$. Therefore, by dominated convergence for conditional expectations, 
    $$ \mathbb E(M_t \mid \mathcal G_s) = \lim_n \mathbb E(M_{t\wedge\sigma_n} \mid \mathcal G_s) = \lim_n M_{s\wedge\sigma_n} = M_s$$
    almost surely, which proves the claim. 
\end{proof}

\section{Table of simulation results} \label{simtable}

\noindent
Table~\ref{tab1} shows some more details on the simulation results, which are the basis for Figures~\ref{fig1} and~\ref{fig2}.

\begin{table}[H]
\centering
\caption{\label{tab1} Simulated values of first clicks for population size $N=10^4$ in $10^3$ simulations per parameter combination, when starting the system in Poi$(\theta)$. ``--'' marks parameter sets where either not all paths showed a click in $10^6$ generations. This table is also available as an \href{https://pfaffelh.github.io/ratchet_figures/}{interactive version}.}
\setlength{\tabcolsep}{4pt}
\begin{adjustbox}{max width=\textwidth, max totalheight=0.9\textheight, center}
\scriptsize
\begin{tabular}{c lccccccccc}
\toprule
$\psi \backslash \delta$ &  & $0.1$ & $0.2$ & $0.3$ & $0.4$ & $0.5$ & $0.6$ & $0.7$ & $0.8$ & $0.9$ \\
\midrule
 & $Ns$ & 1.758 & 4.417 & 11.09 & 27.87 & 70 & 175.8 & 441.7 & 1109 & 2787 \\
 & $Nu$ & 1.619 & 8.136 & 30.65 & 102.7 & 322.4 & 971.7 & 2848 & 8175 & 2.31e+04 \\
$0.7$ & $N\,e^{-(u/s)}$ & 3981 & 1585 & 631 & 251.2 & 100 & 39.81 & 15.85 & 6.31 & 2.512 \\
 & $t_0$ & 2.585e+04 & 7602 & 2922 & 1079 & 426.8 & 166.4 & 64.38 & 24.66 & 9.278 \\
 & $t_0\,s/\log(u/s)$ & $<0$ & 5.496 & 3.189 & 2.305 & 1.956 & 1.712 & 1.526 & 1.370 & 1.222 \\
\midrule
 & $Ns$ & 2.512 & 6.31 & 15.85 & 39.81 & 100 & 251.2 & 631 & 1585 & 3981 \\
 & $Nu$ & 2.314 & 11.62 & 43.79 & 146.7 & 460.5 & 1388 & 4068 & 1.168e+04 & 3.3e+04 \\
$1$ & $N\,e^{-(u/s)}$ & 3981 & 1585 & 631 & 251.2 & 100 & 39.81 & 15.85 & 6.31 & 2.512 \\
 & $t_0$ & 2.537e+04 & 8176 & 2805 & 1088 & 412.3 & 155 & 58.76 & 23 & 8.59 \\
 & $t_0\,s/\log(u/s)$ & $<0$ & 8.444 & 4.375 & 3.320 & 2.700 & 2.278 & 1.989 & 1.825 & 1.617 \\
\midrule
 & $Ns$ & 3.265 & 8.202 & 20.6 & 51.75 & 130 & 326.5 & 820.2 & 2060 & 5175 \\
 & $Nu$ & 3.008 & 15.11 & 56.93 & 190.7 & 598.7 & 1805 & 5288 & 1.518e+04 & 4.29e+04 \\
$1.3$ & $N\,e^{-(u/s)}$ & 3981 & 1585 & 631 & 251.2 & 100 & 39.81 & 15.85 & 6.31 & 2.512 \\
 & $t_0$ & 2.822e+04 & 8215 & 2768 & 1037 & 428 & 150.8 & 57.72 & 23.07 & 8.302 \\
 & $t_0\,s/\log(u/s)$ & $<0$ & 11.031 & 5.610 & 4.116 & 3.644 & 2.881 & 2.540 & 2.380 & 2.031 \\
\midrule
 & $Ns$ & 5.024 & 12.62 & 31.7 & 79.62 & 200 & 502.4 & 1262 & 3170 & 7962 \\
 & $Nu$ & 4.627 & 23.25 & 87.58 & 293.3 & 921 & 2776 & 8136 & 2.336e+04 & 6.6e+04 \\
$2$ & $N\,e^{-(u/s)}$ & 3981 & 1585 & 631 & 251.2 & 100 & 39.81 & 15.85 & 6.31 & 2.512 \\
 & $t_0$ & 3.624e+04 & 9328 & 2914 & 1093 & 416.1 & 164.5 & 65.45 & 21.64 & 8.497 \\
 & $t_0\,s/\log(u/s)$ & $<0$ & 19.269 & 9.088 & 6.672 & 5.449 & 4.834 & 4.432 & 3.434 & 3.199 \\
\midrule
 & $Ns$ & 12.56 & 31.55 & 79.24 & 199.1 & 500 & 1256 & 3155 & 7924 & -- \\
 & $Nu$ & 11.57 & 58.11 & 219 & 733.3 & 2303 & 6941 & 2.034e+04 & 5.839e+04 & -- \\
$5$ & $N\,e^{-(u/s)}$ & 3981 & 1585 & 631 & 251.2 & 100 & 39.81 & 15.85 & 6.31 & -- \\
 & $t_0$ & 1.536e+05 & 2.598e+04 & 7041 & 2266 & 796 & 283.7 & 109.4 & 57.24 & -- \\
 & $t_0\,s/\log(u/s)$ & $<0$ & 134.148 & 54.900 & 34.595 & 26.063 & 20.844 & 18.511 & 22.713 & -- \\
\midrule
 & $Ns$ & -- & -- & 158.5 & 398.1 & 1000 & 2512 & 6310 & -- & -- \\
 & $Nu$ & -- & -- & 437.9 & 1467 & 4605 & 1.388e+04 & 4.068e+04 & -- & -- \\
$10$ & $N\,e^{-(u/s)}$ & -- & -- & 631 & 251.2 & 100 & 39.81 & 15.85 & -- & -- \\
 & $t_0$ & -- & -- & 5.639e+04 & 1.38e+04 & 4142 & 1453 & 991.8 & -- & -- \\
 & $t_0\,s/\log(u/s)$ & -- & -- & 879.267 & 421.433 & 271.205 & 213.564 & 335.783 & -- & -- \\
\bottomrule
\end{tabular}
\end{adjustbox}
\end{table}
\end{appendix}

\bigskip

\noindent
\textbf{Funding} \\
\noindent
CSH is funded by the Deutsche Forschungsgemeinschaft (DFG, German Research Foundation) – Project-ID 499552394 – SFB Small Data. \\ 
\noindent
\textbf{Data availability} \\
\noindent
The python scripts to perform the simulation studies are available on \href{https://doi.org/10.5281/zenodo.20574536}{Zenodo} (\url{https://doi.org/10.5281/zenodo.20574536}).\\
\noindent
\textbf{Declaration of Conflicts of Interest} \\
The authors declare that there are no conflicts of interest.
\\
\textbf{Use of Generative-AI Tools Declaration} \\
The authors declare they have used ChatGPT as well as Claude for coding the simulations,  figures and tables in Section~\ref{ss:interpretation}.

\bibliographystyle{imsart-number}
\bibliography{bibliography}

%%%%%%%%%%%%%%%%%%%%%%%%%%%%%%%%%%%%%%%%%%%%%%%%%%%%%%%%%%%%%
%%                  The Bibliography                       %%
%%                                                         %%
%%  imsart-???.bst  will be used to                        %%
%%  create a .BBL file for submission.                     %%
%%                                                         %%
%%  Note that the displayed Bibliography will not          %%
%%  necessarily be rendered by Latex exactly as specified  %%
%%  in the online Instructions for Authors.                %%
%%                                                         %%
%%  MR numbers will be added by VTeX.                      %%
%%                                                         %%
%%  Use \cite{...} to cite references in text.             %%
%%                                                         %%
%%%%%%%%%%%%%%%%%%%%%%%%%%%%%%%%%%%%%%%%%%%%%%%%%%%%%%%%%%%%%

%% if your bibliography is in bibtex format, uncomment commands:
%\bibliographystyle{imsart-number} % Style BST file (imsart-number.bst or imsart-nameyear.bst)
%\bibliographystyle{chicago}

%\bibliography{bibliography}       % Bibliography file (usually '*.bib')

\end{document}